\newtheorem{remark}{Remark}[section]
\newtheorem{theorem}{Theorem}[section] 
\newtheorem{lemma}[theorem]{Lemma} 
\numberwithin{equation}{section}
\numberwithin{figure}{section}
\numberwithin{equation}{section} \numberwithin{figure}{section}
\numberwithin{table}{section}
\newcommand{\bx}{{\mathbf x}}
\newcommand{\bn}{{\mathbf n}}
\newcommand{\be}{\begin{equation}}
\newcommand{\ee}{\end{equation}}
\newcommand{\bse}{\begin{subequations}}
\newcommand{\ese}{\end{subequations}}
\def\beqlb{\begin{eqnarray}}\def\eeqlb{\end{eqnarray}}
\def\beqnn{\begin{eqnarray*}}\def\eeqnn{\end{eqnarray*}}
\journal{ arXiv }
\begin{document}
\begin{frontmatter}

\title{A First-Order Linear Energy Stable Scheme for the  Cahn-Hilliard Equation with  Dynamic Boundary Conditions under the Effect of Hyperbolic Relaxation }

\author[MY]{Minghui Yu}

\affiliation[MY]{organization={School of Science, Beijing University of Posts and Telecommunications},
            city={Beijing},
            postcode={100876}, 
            country={China}}
            \ead{ ymh@bupt.edu.cn}
            
\author[RC]{Rui Chen}
\affiliation[RC]{organization={School of Science, Key Laboratory of Mathematics and Information Networks (Ministry of Education), Beijing University of Posts and Telecommunications},
            city={Beijing},
            postcode={100876}, 
            country={China}}
\ead{Corresponding author: ruichen@bupt.edu.cn}  

\begin{abstract}
In this paper we focus on the Cahn-Hilliard equation with dynamic boundary conditions, by adding two hyperbolic relaxation terms to the system. We verify that the energy of the total system is decreasing with time. By adding two stabilization terms, we have constructed a first-order temporal accuracy numerical scheme, which is linear and energy stable. Then we prove that the scheme is of first-order in time by the error estimates.  At last we carry out enough numerical results to validate the the temporal convergence and the energy stability of such  scheme. Moreover, we have present the differences of the numerical results with and without the hyperbolic terms, which show that the hyperbolic terms can help the total energy decreasing slowly.
\end{abstract}

\begin{keyword}
    Hyperbolic Cahn-Hilliard equation\sep Dynamic boundary conditions\sep Error estimates\sep Linear numerical scheme\sep Energy stability.
\MSC 65N08 \sep 65N12     
\end{keyword}
\date{}
\end{frontmatter}

\section{Introduction}

The Cahn-Hilliard equation was first proposed by John W. Cahn and John E. Hilliard in 1958 to describe the phase separation phenomenon in binary mixtures (such as alloys and solutions) \cite{Cahn1958}. This equation has become a cornerstone in materials science, describing the phase separation process in binary alloys accurately, especially in the early stages of spinodal decomposition. The Cahn-Hilliard equation assumes that the material is isotropic and has been widely applied in theoretical studies of phase separation processes \cite{Cherfils2011, Novick-Cohen2008, Novick-Cohen2008b}. For example, it not only simulates spontaneous heterogenization in binary mixtures such as spinodal decomposition, but also describes mechanisms of pattern formation such as nucleation and growth and coarsening \cite{Bates1993, Abels2012, Anderson1998}.

As a representative of diffuse interface models, the Cahn-Hilliard equation avoids the explicit interface tracking issues of classical sharp-interface models by dividing the components of the mixture into thin layers, thus improving computational efficiency \cite{Cahn1961}. Moreover, this model can naturally handle complex geometries and topological changes of interfaces, significantly simplifying the computation process \cite{Du2020}. The Cahn-Hilliard equation and its variants have been widely applied in many fields, including block copolymers \cite{Kim2016}, image inpainting \cite{Bertozzi2007, Bertozzi2007b}, tumor growth models \cite{Garcke2016, Oden2010, Oden2013}, two-phase flow \cite{Gurtin1996, Hohenberg1977}, and moving contact line problems \cite{Jacqmin2000, Pruss2006}.

The Cahn-Hilliard equation is usually equipped with periodic boundary conditions or homogeneous Neumann boundary conditions. 
Then Liu et al. \cite{Liu2019} have proposed the Cahn-Hilliard type dynamic boundary condition for the Cahn-Hilliard equation. In their model, the system is energy-stable and conserves mass both in the bulk and on the boundary. Other variants of the Cahn-Hilliard equation, particularly those with dynamic boundary conditions, also exist in the literature (see references \cite{Goldstein2011,Knopf2021}). Numerous studies have investigated energy-stable numerical schemes for the Cahn-Hilliard equation under classical boundary conditions, particularly periodic and Neumann boundary conditions, such as the stabilization method \cite{HLT07}, the convex splitting approach \cite{Chen16,Shen12,Wise10}, the Lagrange multiplier approach \cite{BGG2011,GT2013,GT2014,TG2014}, the Invariant Energy Quadratization (IEQ) approach \cite{Chen17,GX19,Yang16,Yang17}, the Scaler Auxiliary Variable (SAV) approach \cite{Shen17,HSY20} and other approaches \cite{CY19,CLS19,QSZ15,ZY20}. Meanwhile, several studies have also examined energy-stable numerical schemes for the Cahn-Hilliard equation with dynamic boundary conditions (see references \cite{Liu2019,Metzger2023,Bao2021,Bao2021a,
Meng2023,Knopf2021a,Cherfils2010,Cherfils2014,Fukao2017,Israel2014}).

Considering the delay in the separation of phases,
 Galenko et al. \cite{Galenko2001,Galenko2005,Galenko2007,LZG09,Galenko2013} have introduced the hyperbolic relaxation term to the Cahn-Hilliard system.
  Compared to the original equation, the equation with the inertial term is a hyperbolic equation with relaxation characteristics, which leads to different mathematical features in numerical solutions and introduces new challenges \cite{Galenko2013}. Additionally, the introduction of the hyperbolic term provides a deeper understanding of the dynamics of phase separation, especially in describing the delay of rapid phase transitions. There are also some works on designing the energy stable schemes for the hyperbolic Cahn-Hilliard model. Yang et al. \cite{YZH2018,CMY2023} have constructed energy stable schemes for the viscous Cahn-Hilliard equation with hyperbolic relaxation by the IEQ approach. Meanwhile, they show the error analysis for the second-order semi-discrete temporal discretization schemes. Wu et al. \cite{Wu2007} have investigated the well-posedness and asymptotic behavior of solutions to the parabolic-hyperbolic phase field system with dynamic boundary conditions.



Inspired by the Cahn-Hilliard model \cite{Liu2019} and  hyperbolic effects, we incorporate hyperbolic terms into both the bulk equation and the dynamic boundary condition. We find that this  hyperbolic model with the hyperbolic dynamic boundary condition simultaneously satisfies the energy dissipation law and preserves mass conservation in the bulk and on the boundary under specific conditions. Then we utilize a stabilization approach to construct a first-order temporal discretization scheme that is both linear and energy stable. For spatial approximation, we adopt a central finite difference discretization.

The contributions of this paper are present as follows.
\begin{itemize}
\item To the best of our knowledge, it is the first time to investigate the hyperbolic Cahn-Hilliard equation with the hyperbolic dynamic boundary condition. This hyperbolic model holds the energy dissipation law. 

\item We construct a first-order linear energy stable scheme for the model by the stabilization method. Meanwhile, we give the rigorous analysis to prove the scheme is of first-order time accuracy.
    
\item We carry out enough numerical cases to illustrate the time accuracy and the energy decay in the scheme. Moreover we verify that the hyperbolic terms can delay the spinodal decomposition (or coarsening) from the numerical tests.
\end{itemize}

The remainder of this paper is organized as follows: In Section \ref{sec2}, we introduce the governing equations with hyperbolic relaxation, which is energy stable. In Section \ref{sec3}, we construct a linear, energy stable and first-order temporal accuracy  semi-discreate scheme by adding two stabilization terms. In Section \ref{sec4}, we show the  error analysis of the numerical scheme. In Section \ref{sec5} we provide enough numerical results to show the temporal accuracy and illustrate the effect of the hyperbolic relaxation terms. Finally, we present the concluding remarks in the last Section.

\section{The governing equations}\label{sec2}

In the first place, we recall that the Liu-Wu model \cite{Liu2019} in the following form:
\begin{align}
    \label{ch1}
    &\phi_t= M_1 \Delta\mu, & \text { in } \Omega \times(0, T], \\
    \label{ch2}
    &\mu=  -\Delta \phi+ f(\phi), &\text { in } \Omega \times(0, T], \\
    \label{nbc1}
    &\partial_\bn\mu=0,& \text { on } \Gamma \times(0, T],
    \\
    \label{ch3}
    &\left.\phi\right|_{\Gamma}=\psi, & \text { on } \Gamma \times(0, T], \\
    \label{ch4}
    &\psi_t= M_2 \Delta_{\Gamma}\mu_{\Gamma}, & \text { on } \Gamma \times(0, T], \\
    \label{ch5}
    &\mu_{\Gamma}=-\Delta_{\Gamma} \psi+g(\psi)+\partial_{\mathbf{n}} \phi, & \text { on } \Gamma \times(0, T],
\end{align}
where $T$ is a finite time, $\Omega\subset\mathbb{R}^d(d=2,3)$ is the bounded domain with its boundary $\Gamma=\partial\Omega$, $\bn$ denotes the unit normal vector on $\Gamma$, $\phi:=\phi(\bx,t)$ stands for the phase-field variable, $M_1$ and $M_1$ are  relaxation parameters with the positive value, $\Delta_{\Gamma}$ denotes the Laplace-Beltrami operator on $\Gamma$.
$f(\phi)=F'(\phi)$. $F(\phi)$ is the double well (Ginzburg-Landau) potential,
\begin{align}
    \label{equ:double-well potential}
    F(\phi)=\frac{1}{4\varepsilon^2}(\phi^2-1)^2,
\end{align}
where $\varepsilon$ is a positive constant that measure  the width of the interface,  $\mu$ and $\mu_{\Gamma}$ stand for the chemical potentials in the bulk and on the boundary respectively, which are obtained  from the total energy.

The total energy reads as follows, consisting of the bulk energy and the surface energy,
\begin{align}
    E^{total}(\phi,\psi) &= E^{bulk}(\phi)+E^{surf}(\psi),\\
    \label{equ:E_bulk}
    E^{bulk}\left(\phi\right)&=\int_{\Omega} F\left(\phi\right)+\frac{1}{2} \left|\nabla \phi\right|^2 d \bx,\\
    \label{equ:E_surf}
    E^{surf}\left(\psi\right)&=\int_{\Gamma}  G\left(\psi\right)+\frac{1}{2}\left|\nabla_{\Gamma} \psi\right|^2 d S,
\end{align}
where $\nabla_{\Gamma}$ is the tangential or surface gradient operator on $\Gamma$, $g(\psi)=G'(\psi)$ and $G(\psi)$ is also the nonlinear potential. Ones can choose the typical potential for moving contact line problems \cite{Ma2017,Chen2018a}, or choose the double well (Ginzburg-Landau) potential \eqref{equ:double-well potential} as surface potential.

It is easy to find that the Liu-Wu model \eqref{ch1}-\eqref{ch5} satisfies the following energy dissipation law and the mass conservation law,
\beqlb
&&\frac{d}{dt}E^{total}(\phi,\psi)=-M_1\int_{\Omega}|\nabla\mu|^2d \bx-M_2\int_{\Gamma}|\nabla_{\Gamma}\mu_{\Gamma}|^2d S,\\
&&\int_{\Omega}\phi(\bx,t)d\bx=\int_{\Omega}\phi(\bx,0)d\bx,\;\int_{\Gamma}\psi(\bx,t)d S=\int_{\Gamma}\psi(\bx,0)d S.
\eeqlb

\begin{remark}
Liu-Wu model assumes that there has no mass exchange between the bulk and the boundary. While
Goldstein et al. \cite{Goldstein2011} have proposed a Cahn-Hilliard model (called GMS model) by assuming that there has  mass exchange between the bulk and the boundary. Morover Knopf et al. \cite{Knopf2021} have proposed a new model (called KLLM model), which can be  regarded as an interpolation between the GMS model \cite{Goldstein2011} and the Liu-Wu model \cite{Liu2019}. In this model, a relaxation parameter is introduced into the boundary condition. When this parameter approaches zero, the model converges to the GMS model, whereas when it tends to infinity, it reduces to the Liu-Wu model.
\end{remark}

By adding two hyperbolic terms to the Liu-Wu model \eqref{ch1}-\eqref{ch5}, we have the following hyperbolic Cahn-Hilliard equation with the hyperbolic Cahn-Hilliard type dynamic boundary condition,

\begin{align}
&\beta_1 \phi_{tt} + \phi_t = M_1 \Delta \mu, && \text { in } \Omega \times(0, T], \label{2.9}\\
&\mu = -\Delta \phi +  f(\phi), && \text { in } \Omega \times(0, T], \label{2.10}\\
&\partial_\mathbf{n}\mu=0,&& \text { on } \Gamma \times(0, T], \label{2.14}\\
&\phi|_\Gamma = \psi, &&\text { on } \Gamma \times(0, T] ,\label{2.11}\\
&\beta_2 \psi_{tt} + \psi_t = M_2 \Delta_\Gamma \mu_\Gamma,&& \text { on } \Gamma \times(0, T],\label{2.12}\\
&\mu_\Gamma = -\Delta_\Gamma \psi + g(\psi)+ \partial_\mathbf{n} \phi,&& \text { on } \Gamma \times(0, T], \label{2.13}
\end{align}
where $\beta_1\geq0$ and $\beta_2\geq0$ are the relaxation parameters. When $\beta_1=\beta_2=0$, the system reduces to the standard Liu-Wu model \eqref{ch1}-\eqref{ch5} that conserves the mass density in the bulk and on the surface. When $\beta_1>0$ and $\beta_2>0$, the mass conservation is maintained only provided that $\int_{\Omega}\phi_t(\bx,t)d\bx=0$ and $\int_{\Gamma}\psi_t(\bx,t)dS=0$. To find this, by taking the  $L^2(\Omega)$ inner product of \eqref{2.9} with $1$ and  $L^2(\Gamma)$ inner product of \eqref{2.12} with $1$ respectively, we can derive immediately, 
\beqlb
&&\beta_1\frac{d}{dt}\int_{\Omega}\phi_t(\bx,t)d\bx+\int_{\Omega}\phi_t(\bx,t)d\bx=0,\\
&&\beta_2\frac{d}{dt}\int_{\Gamma}\psi_t(\bx,t)dS+\int_{\Gamma}\psi_t(\bx,t)dS=0.
\eeqlb
Then we deduce the solutions from the ODE systems,
\beqlb
&&\int_{\Omega}\phi_t(\bx,t)d\bx
=e^{-\frac{1}{\beta_1}t}\int_{\Omega}\phi_t(\bx,0)d\bx,\\
&&\int_{\Gamma}\psi_t(\bx,t)dS
=e^{-\frac{1}{\beta_2}t}\int_{\Gamma}\psi_t(\bx,0)dS.
\eeqlb
Thus by setting $\int_{\Omega}\phi_t(\bx,0)d\bx=0$ and $\int_{\Gamma}\psi_t(\bx,0)dS=0$, we have
\beqlb
&&\int_{\Omega}\phi_t(\bx,t)d\bx=\int_{\Omega}\phi_{tt}(\bx,t)d\bx=0,\\
&&\int_{\Gamma}\psi_t(\bx,t)dS=\int_{\Gamma}\psi_{tt}(\bx,t)dS=0.
\eeqlb

Define the inverse Laplace operator $\Delta^{-1}$ and the inverse Laplace-Beltrami operator $\Delta_{\Gamma}^{-1}$ such that $W_1=\Delta^{-1}\omega_1$ (with $\int_{\Omega}\omega_1 d\bx=0$) and $W_2=\Delta_{\Gamma}^{-1}\omega_2$ (with $\int_{\Gamma}\omega_2 dS=0$), iff
\beqlb
&&\Delta W_1=\omega_1,\;\int_{\Omega}\omega_1 d\bx=0,\;\partial_{\bn} W_1|_{\Gamma}=0,\\
&&\Delta_{\Gamma} W_2=\omega_2,\;\int_{\Gamma}\omega_2 dS=0.
\eeqlb
Next we will derive the energy dissipation law for the system \eqref{2.9}-\eqref{2.13}. 
Here and after, for any function $f,g\in L^2(\Omega)$, we use $(f,g)_{\Omega}=\int_{\Omega}fgd\bx$, $(f,g)_{\Gamma}=\int_{\Gamma}fg dS$, $||f||^2=(f,f)_{\Omega}$ and $||f||_{\Gamma}^2=(f,f)_{\Gamma}$.

\begin{theorem}
The model \eqref{2.9}-\eqref{2.13} is energy stable in the sense that 
\beqlb
\frac{d}{dt}\mathcal{E}(\phi,\psi)=-\frac{1}{M_1}||\nabla \Delta ^{-1}\phi_t||^2-\frac{1}{M_2}||\nabla_{\Gamma}\Delta_{\Gamma}^{-1}\psi_t||_{\Gamma}^2,
\eeqlb
where the energy $\mathcal{E}(\phi,\psi)=E^{total}(\phi,\psi)+\frac{\beta_1}{2M_1}||\nabla\Delta^{-1}\phi_t||^2+\frac{\beta_2}{2M_2}||\nabla_{\Gamma}\Delta_{\Gamma}^{-1}\psi_t||_{\Gamma}^2$.
\end{theorem}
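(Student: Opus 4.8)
The plan is to reproduce the dissipation computation used for the classical Liu--Wu model, but with two extra bookkeeping steps that absorb the inertial (hyperbolic) contributions into the augmented energy $\mathcal{E}$. The whole argument rests on the facts, already established above, that $\int_{\Omega}\phi_t\,d\bx=\int_{\Omega}\phi_{tt}\,d\bx=0$ and $\int_{\Gamma}\psi_t\,dS=\int_{\Gamma}\psi_{tt}\,dS=0$, so that $\Delta^{-1}\phi_t$, $\Delta^{-1}\phi_{tt}$, $\Delta_{\Gamma}^{-1}\psi_t$ and $\Delta_{\Gamma}^{-1}\psi_{tt}$ are all well defined in the sense of the inverse operators introduced just above the statement.

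First I would differentiate $E^{total}$ in time. Writing out $\frac{d}{dt}E^{bulk}$ and integrating $(\nabla\phi,\nabla\phi_t)_{\Omega}$ by parts produces $(\mu,\phi_t)_{\Omega}+(\partial_\bn\phi,\phi_t)_{\Gamma}$ once $\mu=-\Delta\phi+f(\phi)$ is inserted; likewise $\frac{d}{dt}E^{surf}=(\mu_{\Gamma}-\partial_\bn\phi,\psi_t)_{\Gamma}$ from $\mu_{\Gamma}=-\Delta_{\Gamma}\psi+g(\psi)+\partial_\bn\phi$ (here $\Gamma$ is closed, so no boundary-of-boundary terms arise when integrating the Laplace--Beltrami operator by parts). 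Since $\phi|_{\Gamma}=\psi$ forces $\phi_t|_{\Gamma}=\psi_t$, the two occurrences of $\partial_\bn\phi$ cancel, leaving the clean expression
\[
\frac{d}{dt}E^{total}=(\mu,\phi_t)_{\Omega}+(\mu_{\Gamma},\psi_t)_{\Gamma}.
\]

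Next I would rewrite each term through the evolution equations. For the bulk term I would use $\phi_t=\Delta(\Delta^{-1}\phi_t)$ and integrate by parts twice; both boundary contributions vanish because $\partial_\bn\mu=0$ by \eqref{2.14} and $\partial_\bn(\Delta^{-1}\phi_t)=0$ by the definition of $\Delta^{-1}$, so $(\mu,\phi_t)_{\Omega}=(\Delta\mu,\Delta^{-1}\phi_t)_{\Omega}$. Substituting $M_1\Delta\mu=\beta_1\phi_{tt}+\phi_t$ from \eqref{2.9} and splitting yields the dissipation term $\frac{1}{M_1}(\phi_t,\Delta^{-1}\phi_t)_{\Omega}=-\frac{1}{M_1}||\nabla\Delta^{-1}\phi_t||^2$ together with the inertial term $\frac{\beta_1}{M_1}(\phi_{tt},\Delta^{-1}\phi_t)_{\Omega}$. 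The key algebraic step is to recognize this last inner product as a total time derivative: using that $\Delta^{-1}$ is self-adjoint under homogeneous Neumann data and commutes with $\partial_t$, one gets $(\phi_{tt},\Delta^{-1}\phi_t)_{\Omega}=-\frac{1}{2}\frac{d}{dt}||\nabla\Delta^{-1}\phi_t||^2$. The boundary term is treated identically on the closed manifold $\Gamma$, with $\Delta_{\Gamma}^{-1}$ in place of $\Delta^{-1}$ and equation \eqref{2.12} in place of \eqref{2.9}, producing $-\frac{1}{M_2}||\nabla_{\Gamma}\Delta_{\Gamma}^{-1}\psi_t||_{\Gamma}^2-\frac{\beta_2}{2M_2}\frac{d}{dt}||\nabla_{\Gamma}\Delta_{\Gamma}^{-1}\psi_t||_{\Gamma}^2$.

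Finally I would collect terms and move the two total-derivative pieces $\frac{d}{dt}\big(\frac{\beta_1}{2M_1}||\nabla\Delta^{-1}\phi_t||^2\big)$ and $\frac{d}{dt}\big(\frac{\beta_2}{2M_2}||\nabla_{\Gamma}\Delta_{\Gamma}^{-1}\psi_t||_{\Gamma}^2\big)$ to the left-hand side, which is exactly how $\mathcal{E}$ is assembled from $E^{total}$, giving the claimed identity. I expect the only genuinely nonroutine point to be the justification of the self-adjointness identity and of the interchange $\partial_t\Delta^{-1}=\Delta^{-1}\partial_t$: both require that the zero-mean constraints on $\phi_t,\phi_{tt}$ (and $\psi_t,\psi_{tt}$) persist in time, which is precisely the content of the preceding ODE computation, so this slots in cleanly rather than posing a real obstacle.
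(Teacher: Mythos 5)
Your proposal is correct and takes essentially the same route as the paper's proof: both arguments hinge on the zero-mean conditions making $\Delta^{-1}\phi_t$, $\Delta_\Gamma^{-1}\psi_t$ well defined, on the cancellation of the $(\partial_{\bn}\phi,\phi_t)_\Gamma$ terms via $\phi_t|_\Gamma=\psi_t$, and on the identities $(\phi_t,\Delta^{-1}\phi_t)_\Omega=-\|\nabla\Delta^{-1}\phi_t\|^2$ and $(\phi_{tt},\Delta^{-1}\phi_t)_\Omega=-\tfrac12\tfrac{d}{dt}\|\nabla\Delta^{-1}\phi_t\|^2$ (plus their surface analogues). The only difference is an inessential reordering: the paper applies $\Delta^{-1}$, $\Delta_\Gamma^{-1}$ to the evolution equations and tests with $\phi_t$, $\psi_t$, whereas you differentiate $E^{total}$ first and then substitute the equations — the same inner products and integrations by parts appear in both.
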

\begin{proof}
     We introduce two variables $\Phi=\phi_t$ and $\Psi=\psi_t$. Since $\int_\Omega \Phi \, dx = \int_\Omega \Phi_t \, dx = 0$ and $\int_\Omega \Psi \, dx = \int_\Omega \Psi_t \, dx = 0$, applying the $\Delta^{-1}$ operator to \eqref{2.9} and $\Delta_{\Gamma}^{-1}$ operator to \eqref{2.12}, we obtain the following equations,
\begin{align}
& \beta_1 \Delta^{-1} \Phi_{t} + \Delta^{-1} \Phi = M_1 (-\Delta \phi + f(\phi) ),  \label{2.15}\\
& \beta_2 \Delta^{-1}_\Gamma \Psi_{t} + \Delta^{-1}_\Gamma \Psi = M_2 (-\Delta_\Gamma \psi + g(\psi)+ \partial_{\bn} \phi ). \label{2.16}
\end{align}

By taking the $L^2(\Omega)$ inner product of \eqref{2.15} with $\frac{1}{M_1}\Phi$ and the $L^2(\Gamma)$ inner product of \eqref{2.16} with $\frac{1}{M_2}\Psi$, we obtain:
\begin{align}
&\frac{ \beta_1}{M_1} (\Delta^{-1} \Phi_{t},\Phi)_\Omega + \frac{1}{M_1}(\Delta^{-1} \Phi,\Phi)_\Omega = -(\partial_{\bn} \phi,\phi_t)_\Gamma+\frac{d}{dt}\int_\Omega(\frac{|\nabla \phi|^2}{2} +  F(\phi) )d\bx, \label{2.17}\\
& \frac{\beta_2}{M_2} (\Delta^{-1}_\Gamma \Psi_{t},\Psi)_\Gamma +\frac{1}{M_2} (\Delta^{-1}_\Gamma \Psi,\Psi)_\Gamma =(\partial_{\bn} \phi,\phi_t)_\Gamma + \frac{d}{dt}\int_\Gamma(\frac{|\nabla_\Gamma \psi|^2}{2} + G(\psi))dS. \label{2.18}
\end{align}

We define $p=\Delta^{-1}\Phi$ and $q=\Delta_\Gamma^{-1}\Psi$. Substituting these into the inner product formula above and simplifying, we obtain,
\begin{align}
&  (\Delta^{-1} \Phi,\Phi)_\Omega= (p,\Delta p)_\Omega=-\|\nabla p\|^2, \label{2.19}\\
& (\Delta^{-1} \Phi_{t},\Phi)_\Omega=(\Phi_{t},\Delta^{-1} \Phi)_\Omega= (\Delta p_{t},p)_\Omega=-\frac{1}{2}\frac{d}{dt}\|\nabla p\|^2, \label{2.20}\\
&  (\Delta^{-1}_\Gamma \Psi,\Psi)_\Gamma= (q,\Delta_\Gamma q)_\Gamma=-\|\nabla_\Gamma q\|_\Gamma^2, \label{2.21}\\
& (\Delta^{-1}_\Gamma \Psi_{t},\Psi)_\Gamma=(\Psi_{t},\Delta^{-1}_\Gamma \Psi)_\Gamma= (\Delta_\Gamma q_{t},q)_\Gamma=-\frac{1}{2}\frac{d}{dt}\|\nabla_\Gamma q\|_\Gamma^2. \label{2.22}
\end{align}

By combining the above formulas, we obtain the following energy dissipation law,
    \begin{align}
        & \frac{d}{dt}\Big{(}\int_\Omega(\frac{|\nabla \phi|^2}{2} + F(\phi) +\frac{\beta_1}{2M_1}|\nabla p|^2)d\bx+ \int_\Gamma(\frac{|\nabla_\Gamma \psi|^2 }{2}+ G(\psi)+\frac{\beta_2}{2M_2}|\nabla_\Gamma q|^2)dS \Big{)}\notag\\
         =&-\frac{1 }{M_1}\|\nabla p\|^2-\frac{1 }{M_2}\|\nabla_\Gamma q\|_{\Gamma}^2\le 0 .
    \end{align}

\end{proof} 

\section{A first-order energy stable scheme}\label{sec3}
In this section, we directly present the numerical scheme of the equation as follows, then prove the energy stability of the numerical scheme, and conduct a simple error analysis.

Assuming that $\phi^n$ and $\phi^{n-1}$ with $n\geq 1$ are known, we update $\phi^{n+1}$ as follows,
\begin{align}
&\beta_1\frac{\Phi^{n+1}-\Phi^{n}}{\tau} + \Phi^{n+1} = M_1 \Delta \mu^{n+1},\;&\text{in } \Omega,\label{3.1} \\
&\mu^{n+1} = -\Delta \phi^{n+1} + f(\phi^n) + s_1 (\phi^{n+1}-\phi^n),\;&\text{in } \Omega,\label{3.2}
\\&\Phi^{n+1} = \frac{\phi^{n+1}-\phi^{n}}{\tau},\;&\text{in } \Omega, \label{3.6}\\
&\phi^{n+1}|_\Gamma = \psi^{n+1},\;&\text{on } \Gamma,\label{3.3}\\
&\beta_2\frac{\Psi^{n+1}-\Psi^{n}}{\tau} + \Psi^{n+1} = M_2 \Delta_\Gamma \mu_\Gamma^{n+1},\;&\text{on } \Gamma,\label{3.4}\\
&\mu_\Gamma^{n+1} =  -\Delta_\Gamma \psi^{n+1} + g(\psi^n) + \partial_{\bn} \phi^{n+1} + s_2(\psi^{n+1}-\psi^n),\;&\text{on } \Gamma, \label{3.5}\\
&\Psi^{n+1} = \frac{\psi^{n+1}-\psi^{n}}{\tau},\;&\text{on } \Gamma, \label{3.7}\\
&\partial_{\bn}\mu^{n+1}=0,\;&\text{on } \Gamma, \label{3.8} 
\end{align}
where $s_1$ and $s_2$ are two stabilizers to be determined, $N$ is the number of time steps with $1 \le n < N$, and $\tau = T / N$ is the time step size. Next we will show the energy stability of the scheme.

\begin{theorem}
If $\displaystyle s_1 \ge  \frac{1}{2}\max_{\xi\in\mathbb{R}} F''(\xi)$ and $\displaystyle s_2 \ge \frac{1}{2} \max_{\eta\in\mathbb{R}} G''(\eta)$,  the scheme  \eqref{3.1}-\eqref{3.8} is energy stable in the sense that

\begin{align}
\frac{\mathcal{E}(\phi^{n+1},\psi^{n+1})-\mathcal{E}(\phi^{n},\psi^{n})}{\tau}\le -\frac{1 }{M_1}\|\nabla p^n\|^2 -\frac{1 }{M_2}\|\nabla_\Gamma q^n\|_{\Gamma}^2,
\end{align}
where $p^n = \Delta^{-1} \Phi^n$ and $q^n = \Delta^{-1}_\Gamma \Psi^n$, and the energy

\begin{equation}
\mathcal{E}(\phi^{n},\psi^n)=\frac{||\nabla \phi^n||^2}{2} + (F(\phi^n),1)_{\Omega}+\frac{\beta_1}{2M_1}||\nabla p^n||^2+ \frac{||\nabla_\Gamma \psi^n||_{\Gamma}^2}{2} + (G(\psi^n),1)_{\Gamma}+\frac{\beta_2}{2M_2}||\nabla_\Gamma q^n||_{\Gamma}^2. 
\end{equation}

\end{theorem}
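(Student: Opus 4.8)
The plan is to reproduce, at the fully discrete level, the continuous energy argument of the preceding theorem, so that the inverse operators $\Delta^{-1}$ and $\Delta_\Gamma^{-1}$ again carry the estimate. As a preliminary I would record discrete mass conservation: testing \eqref{3.1} against $1$ in $L^2(\Omega)$ and invoking $\partial_\bn\mu^{n+1}=0$ from \eqref{3.8} gives $\beta_1\tau^{-1}\big((\Phi^{n+1},1)_\Omega-(\Phi^n,1)_\Omega\big)+(\Phi^{n+1},1)_\Omega=0$, a one-step recursion whose solution stays zero provided the mean of $\Phi$ is initialized to zero, and the same on $\Gamma$ for $\Psi$. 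This is what makes $p^{n+1}=\Delta^{-1}\Phi^{n+1}$ and $q^{n+1}=\Delta_\Gamma^{-1}\Psi^{n+1}$ well defined and licenses the manipulations below.

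Next I would apply $\Delta^{-1}$ to \eqref{3.1} and $\Delta_\Gamma^{-1}$ to \eqref{3.4}, then test the results against $\tfrac{1}{M_1}\Phi^{n+1}$ in $L^2(\Omega)$ and $\tfrac{1}{M_2}\Psi^{n+1}$ in $L^2(\Gamma)$, inserting $\mu^{n+1}$ and $\mu_\Gamma^{n+1}$ from \eqref{3.2} and \eqref{3.5}. All discrete time-difference terms are then collapsed with the polarization identity $2(a-b,a)=\|a\|^2-\|b\|^2+\|a-b\|^2$: the relaxation term $\tfrac{\beta_1}{M_1\tau}(\Delta^{-1}(\Phi^{n+1}-\Phi^n),\Phi^{n+1})_\Omega$ becomes $\tfrac{\beta_1}{2M_1\tau}(\|\nabla p^{n+1}\|^2-\|\nabla p^n\|^2)$ plus a nonnegative remainder, and $(\nabla\phi^{n+1},\nabla(\phi^{n+1}-\phi^n))_\Omega$ becomes the gradient-energy increment plus a nonnegative term. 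The zeroth-order relaxation term supplies the dissipation, since $(\Delta^{-1}\Phi^{n+1},\Phi^{n+1})_\Omega=(p^{n+1},\Delta p^{n+1})_\Omega=-\|\nabla p^{n+1}\|^2$ after integrating by parts against the homogeneous Neumann condition built into $\Delta^{-1}$. The explicit nonlinearity is treated through the Taylor identity $f(\phi^n)(\phi^{n+1}-\phi^n)=F(\phi^{n+1})-F(\phi^n)-\tfrac12 F''(\xi)(\phi^{n+1}-\phi^n)^2$, so that the stabilizer term $\tfrac{s_1}{\tau}\|\phi^{n+1}-\phi^n\|^2$ dominates the quadratic remainder exactly when $s_1\ge\tfrac12\max_\xi F''(\xi)$, with the analogous statement for $g,G,s_2$ on $\Gamma$.

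The step I expect to be the main obstacle, and the crux of the whole argument, is the bulk-surface coupling. Integrating $(-\Delta\phi^{n+1},\Phi^{n+1})_\Omega$ by parts leaves a boundary term $-(\partial_\bn\phi^{n+1},\Phi^{n+1})_\Gamma$, and since \eqref{3.3} and \eqref{3.7} force $\Phi^{n+1}|_\Gamma=\Psi^{n+1}$ this is $-(\partial_\bn\phi^{n+1},\Psi^{n+1})_\Gamma$; on the surface the term $\partial_\bn\phi^{n+1}$ sitting inside $\mu_\Gamma^{n+1}$ in \eqref{3.5} produces $+(\partial_\bn\phi^{n+1},\Psi^{n+1})_\Gamma$. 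Getting the signs right so that these two contributions cancel when the bulk and surface identities are summed is the delicate point; it is the discrete counterpart of the cancellation between \eqref{2.17} and \eqref{2.18}, and it is what makes the coupled system, rather than either equation in isolation, energy stable.

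Finally I would add the bulk and surface identities, identify the telescoping pieces as $\mathcal{E}(\phi^{n+1},\psi^{n+1})-\mathcal{E}(\phi^{n},\psi^{n})$, retain the two dissipation contributions $-\tfrac{1}{M_1}\|\nabla p^{n+1}\|^2$ and $-\tfrac{1}{M_2}\|\nabla_\Gamma q^{n+1}\|_\Gamma^2$, and simply discard all the remaining nonnegative surpluses (the relaxation, gradient-increment, and stabilizer remainders) to arrive at the stated inequality.
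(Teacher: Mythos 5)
Your proposal is correct and follows essentially the same route as the paper's proof: apply $\Delta^{-1}$ and $\Delta_\Gamma^{-1}$ to \eqref{3.1} and \eqref{3.4}, test with $\tfrac{1}{M_1}\Phi^{n+1}$ and $\tfrac{1}{M_2}\Psi^{n+1}$, collapse differences via $2(a-b,a)=\|a\|^2-\|b\|^2+\|a-b\|^2$, control the explicit nonlinearities by the Taylor remainder against the stabilizers, cancel the boundary terms $\mp(\partial_\bn\phi^{n+1},\Psi^{n+1})_\Gamma$ using $\phi^{n+1}|_\Gamma=\psi^{n+1}$, and discard the nonnegative surpluses. Your preliminary remark on discrete mass conservation (ensuring the zero-mean condition that makes $\Delta^{-1}\Phi^{n+1}$, $\Delta_\Gamma^{-1}\Psi^{n+1}$ well defined) is a sound addition that the paper leaves implicit.
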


\begin{proof}
By applying the inverse Laplace operator $\Delta^{-1}$ to \eqref{3.1}, we obtain,

\begin{equation}
    \beta_1 \Delta^{-1}\frac{\Phi^{n+1}-\Phi^{n}}{\tau}+\Delta^{-1}\Phi^{n+1} = M_1 \mu^{n+1}.
    \label{3.11}
\end{equation}
By taking the $L^2(\Omega)$ inner product of \eqref{3.11} with $\frac{1}{M_1}\Phi^{n+1}$, we have

\begin{equation}
    \frac{\beta_1}{M_1}(\Delta^{-1}\frac{\Phi^{n+1}-\Phi^{n}}{\tau},\Phi^{n+1})_\Omega+\frac{1}{M_1}(\Delta^{-1}\Phi^{n+1},\Phi^{n+1})_\Omega = (\mu^{n+1},\Phi^{n+1})_\Omega.
    \label{3.12}
\end{equation}
Noticing that $p^{n+1} = \Delta^{-1}\Phi^{n+1}$,  we deduce
    \begin{align}
    &\frac{\beta_1}{M_1}(\Delta^{-1}\frac{\Phi^{n+1}-\Phi^{n}}{\tau},\Phi^{n+1})_\Omega+\frac{1}{M_1}(\Delta^{-1}\Phi^{n+1},\Phi^{n+1})_\Omega\notag \\
    =&\frac{\beta_1}{M_1\tau}(p^{n+1}-p^n,\Delta p^{n+1})_\Omega+\frac{1}{M_1}(p^{n+1},\Delta p^{n+1})_\Omega\notag\\
    =&-\frac{\beta_1}{M_1\tau}(\nabla p^{n+1}-\nabla p^n,\nabla p^{n+1})_\Omega-\frac{1}{M_1}||\nabla p^{n+1}||^2\notag\\
    =&-\frac{\beta_1}{2M_1\tau}(||\nabla p^{n+1}||^2-||\nabla p^n||^2+||\nabla p^{n+1}-\nabla p^{n}||^2)-\frac{1}{M_1}||\nabla p^{n+1}||^2,\label{3.13}
    \end{align}
    and 
    \begin{align}
    &(\mu^{n+1},\Phi^{n+1})_\Omega=(\mu^{n+1},\frac{\phi^{n+1}-\phi^{n}}{\tau})_\Omega\notag\\
    =&(\frac{\phi^{n+1}-\phi^{n}}{\tau},-\Delta \phi^{n+1} + f(\phi^n) + s_1 (\phi^{n+1}-\phi^n))_\Omega\notag\\
    =&(\frac{\phi^{n+1}-\phi^{n}}{\tau},-\Delta \phi^{n+1})_\Omega + (\frac{\phi^{n+1}-\phi^{n}}{\tau},f(\phi^n) )_\Omega+ (\frac{\phi^{n+1}-\phi^{n}}{\tau},s_1 (\phi^{n+1}-\phi^n))_\Omega\notag\\
    =&-(\partial_{\bn} \phi,\frac{\phi^{n+1}-\phi^{n}}{\tau})_\Gamma +\frac{1}{\tau}(\nabla \phi^{n+1}-\nabla \phi^n,\nabla \phi^{n+1})_\Omega\notag\\
    &+\frac{1}{\tau}(F(\phi^{n+1})-F(\phi^{n}),1)_\Omega-\frac{F''(\xi)}{2\tau}||\phi^{n+1}-\phi^n||^2+\frac{s_1}{\tau}||\phi^{n+1}-\phi^n||^2\notag\\
    =&-(\partial_{\bn} \phi,\frac{\phi^{n+1}-\phi^{n}}{\tau})_\Gamma +\frac{1}{2\tau}(||\nabla \phi^{n+1}||^2-||\nabla \phi^{n}||^2+||\nabla \phi^{n+1}-\nabla \phi^{n}||^2)\notag\\
    &+\frac{1}{\tau}(F(\phi^{n+1})-F(\phi^{n}),1)_\Omega+(\frac{s_1}{\tau}-\frac{F''(\xi)}{2\tau})||\phi^{n+1}-\phi^n||^2,\label{3.14}
    \end{align}
 where we use the identity
    \beqlb
    (2a,(a-b))=|a|^2-|b|^2+|a-b|^2,\label{id1}
    \eeqlb
    and the Taylor expansion
   \beqlb (f\left(\phi^{n}\right),\left(\phi^{n+1}-\phi^{n}\right))_{\Omega}=(F\left(\phi^{n+1}\right)-F\left(\phi^{n}\right),1)_{\Omega}-\frac{F^{\prime \prime}(\xi)}{2}||\phi^{n+1}-\phi^{n}||^{2}.
   \eeqlb
Similarly, by applying the inverse Laplace-Beltrami operator $\Delta_{\Gamma}^{-1}$ to \eqref{3.4}, we obtain
\begin{equation}
    \beta_2 \Delta_\Gamma^{-1}\frac{\Psi^{n+1}-\Psi^{n}}{\tau}+\Delta_\Gamma^{-1}\Psi^{n+1} = M_2 \mu^{n+1}_\Gamma.
    \label{3.15}
\end{equation}
By taking the $L^2(\Gamma)$ inner product of \eqref{3.15} with $\frac{ 1}{M_2}\Psi^{n+1}$, we have
\begin{equation}
    \frac{\beta_2}{M_2}(\Delta_\Gamma^{-1}\frac{\Psi^{n+1}-\Psi^{n}}{\tau},\Psi^{n+1})_\Gamma+\frac{1}{M_2}(\Delta^{-1}_\Gamma\Psi^{n+1},\Psi^{n+1})_\Gamma = (\mu_\Gamma^{n+1},\Psi^{n+1})_{\Gamma}.
    \label{3.16}
\end{equation}
Noting that $q^{n+1} = \Delta_\Gamma^{-1} \Psi^{n+1}$,  we can get
    \begin{align}
    &\frac{\beta_2}{M_2}(\Delta_\Gamma^{-1}\frac{\Psi^{n+1}-\Psi^{n}}{\tau},\Psi^{n+1})_\Gamma+\frac{1}{M_2}(\Delta^{-1}_\Gamma\Psi^{n+1},\Psi^{n+1})_\Gamma\notag \\
    =&\frac{\beta_2}{M_2\tau}(q^{n+1}-q^n,\Delta_\Gamma q^{n+1})_\Gamma + \frac{1}{M_2}(q^{n+1},\Delta_\Gamma q^{n+1})_\Gamma\notag\\
    =&-\frac{\beta_2}{M_2\tau}(\nabla_\Gamma q^{n+1}-\nabla_\Gamma q^n,\nabla_\Gamma q^{n+1})_\Gamma -\frac{1}{M_2}||\nabla_\Gamma q^{n+1}||_\Gamma^2\notag\\
    =&-\frac{\beta_2}{2M_2\tau}(||\nabla_\Gamma q^{n+1}||_\Gamma^2-||\nabla_\Gamma q^n||_\Gamma^2+||\nabla_\Gamma q^{n+1}-\nabla_\Gamma q^{n}||_\Gamma^2)-\frac{1}{M_2}||\nabla_\Gamma q^{n+1}||_\Gamma^2,\label{3.17}
    \end{align}
and
    \begin{align}
    &(\mu_\Gamma^{n+1},\Psi^{n+1})_{\Gamma}=( \mu_{\Gamma}^{n+1},\frac{\psi^{n+1}-\psi^{n}}{\tau})_\Gamma\notag\\
    =&(\frac{\psi^{n+1}-\psi^{n}}{\tau},-\Delta_\Gamma \psi^{n+1} + g(\psi^n) +\partial_{\bn}\phi^{n+1}+ s_2 (\psi^{n+1}-\psi^n))_\Gamma\notag\\
    =&(\partial_{\bn} \phi,\frac{\psi^{n+1}-\psi^{n}}{\tau})_\Gamma+\frac{1}{\tau}(\nabla_\Gamma \psi^{n+1}-\nabla_\Gamma \psi^n,\nabla_\Gamma \psi^{n+1})_\Gamma\notag\\
    &+\frac{1}{\tau}(G(\psi^{n+1})-G(\psi^{n}),1)_\Gamma-\frac{G''(\eta)}{2\tau}||\psi^{n+1}-\psi^n||_\Gamma^2+\frac{s_2}{\tau}||\psi^{n+1}-\psi^n||_\Gamma^2\notag\\
    &=(\partial_{\bn} \phi,\frac{\psi^{n+1}-\psi^{n}}{\tau})_\Gamma +\frac{1}{2\tau}(||\nabla_\Gamma \psi^{n+1}||_{\Gamma}^2-||\nabla_\Gamma \psi^{n}||_{\Gamma}^2+||\nabla_\Gamma \psi^{n+1}-\nabla_\Gamma \psi^{n}||_{\Gamma}^2)\notag\\
    &+\frac{1}{\tau}(G(\psi^{n+1})-G(\psi^{n}),1)_\Gamma+(\frac{s_2}{\tau}-\frac{G''(\eta)}{2\tau})||\psi^{n+1}-\psi^n||_\Gamma^2,\label{3.18}
    \end{align}
where we use the Taylor expansion
\beqlb
(g\left(\psi^{n}\right),\left(\psi^{n+1}-\psi^{n}\right))_{\Gamma}=(G\left(\psi^{n+1}\right)-G\left(\psi^{n}\right),1)_{\Gamma}-\frac{G^{\prime \prime}(\eta)}{2}||\psi^{n+1}-\psi^{n}||_{\Gamma}^{2}.
\eeqlb
By combining all the above equations , we have
    \begin{align}
&\frac{\mathcal{E}(\phi^{n+1},\psi^{n+1})-\mathcal{E}(\phi^{n},\psi^n)}{\tau}+\frac{1}{2\tau}(||\nabla \phi^{n+1}-\nabla \phi^{n}||^2+||\nabla_\Gamma \psi^{n+1}-\nabla_\Gamma \psi^{n}||_\Gamma^2)\notag\\
        &+(\frac{s_1}{\tau}-\frac{F''(\xi)}{2\tau})||\phi^{n+1}-\phi^n||^2+(\frac{s_2}{\tau}-\frac{G''(\eta)}{2\tau})||\psi^{n+1}-\psi^n||_\Gamma^2\notag\\
&+\frac{\beta_1}{2M_1\tau}||\nabla p^{n+1}-\nabla p^{n}||^2+\frac{\beta_2}{2M_2\tau}||\nabla_\Gamma q^{n+1}-\nabla_\Gamma q^{n}||_\Gamma^2\notag\\
        &+\frac{1}{M_1}||\nabla p^{n+1}||^2+\frac{1}{M_2}||\nabla_\Gamma q^{n+1}||_\Gamma^2 = 0.\label{3.19}
    \end{align}
Therefore, under the conditions $\displaystyle s_1 \ge  \frac{1}{2}\max_{\xi\in\mathbb{R}} F''(\xi)$ and $\displaystyle s_2 \ge \frac{1}{2} \max_{\eta\in\mathbb{R}} G''(\eta)$, the following energy dissipation law holds

\begin{equation}
\frac{\mathcal{E}(\phi^{n+1},\psi^{n+1})-\mathcal{E}(\phi^{n},\psi^n)}{\tau}\le-\frac{1}{M_1}||\nabla p^{n+1}||^2-\frac{1}{M_2}||\nabla_\Gamma q^{n+1}||_\Gamma^2 \le 0.
    \label{3.20}
\end{equation}
\end{proof} 

\section{Error estimates}
\label{sec4}
In this section we will show the error estimates for the phase function $\phi$ and $\psi$ in the semi-discrete  scheme \eqref{3.1}-\eqref{3.8}. 

We assume that the derivatives of $F'$ and $G'$ satisfy the Lipschitz condition,

\begin{equation}
    \begin{aligned}
        \max_{\phi\in R}|F''(\phi)|\le L_1, \\
        \max_{\psi\in R}|G''(\psi)|\le L_2.
    \end{aligned}
\end{equation}
This condition is necessary for error estimation.

For a sequence of the functions
$f^0,f^1,f^2,\dots,f^N$ in the Hilbert space $H$, we denote the
sequence by ${f_{\tau}}$ and define the following discrete norm for
${f_{\tau}}$:
\begin{align}
\label{equ:def_f_tau_norm}
\|f_{\tau}\|_{l^\infty(H)}=\max_{0\leq n\leq N}(\|f^n\|_H).
\end{align}

The meaning of 
$f \lesssim g$ 
 is that there is a generic constant $C$ such
that $f \leqslant C g$, where $C$ is independent of $\tau$
but possibly depends on the data and the solution.

Firstly we rewrite the PDE system \eqref{2.9}-\eqref{2.13} in the following truncated form,
\begin{align}
&\beta_1\frac{\Phi(t^{n+1})-\Phi(t^{n})}{\tau} + \Phi(t^{n+1}) = M_1 \Delta \mu(t^{n+1}) + R^{n+1}_\phi,& \text { in } \Omega,\label{4.2}\\
&\mu(t^{n+1}) = -\Delta \phi(t^{n+1}) +  F'(\phi(t^{n})) + s_1 (\phi(t^{n+1})-\phi(t^{n}))+R^{n+1}_{\mu},& \text { in } \Omega,\label{4.3}
\\
&\Phi(t^{n+1}) = \frac{\phi(t^{n+1})-\phi(t^{n})}{\tau} + R_{\Phi}^{n+1},& \text { in } \Omega,\label{4.7}\\
&\phi(t^{n+1})|_\Gamma = \psi(t^{n+1}),& \text { on } \Gamma,\label{4.4}\\
&\beta_2\frac{\Psi(t^{n+1})-2\Psi(t^{n})}{\tau} + \Psi(t^{n+1}) = M_2 \Delta_\Gamma \mu_\Gamma(t^{n+1})+ R^{n+1}_\psi,& \text { on } \Gamma,\label{4.5}\\
&\mu_\Gamma(t^{n+1}) =  -\Delta_\Gamma \psi(t^{n+1}) + G'(\phi(t^{n})) + \partial_{\bn} \phi(t^{n+1}) + s_2(\psi(t^{n+1})-\psi(t^{n}))+R^{n+1}_{\Gamma},& \text { on } \Gamma,\label{4.6}\\
&\Psi(t^{n+1}) = \frac{\psi(t^{n+1})-\psi(t^{n})}{\tau} + R_{\Psi}^{n+1},& \text { on } \Gamma,\label{4.8}\\
&\partial_\mathbf{n} \mu(t^{n+1}) = 0,&\text { on } \Gamma, \label{4.9}
\end{align}
where the truncation errors
\begin{align}
&R^{n+1}_{\phi} = \beta_1\frac{\Phi(t^{n+1})-\Phi(t^{n})}{\tau} + \Phi(t^{n+1})-\beta_1 \phi_{tt}(t^{n+1})-\phi_t(t^{n+1}),\\     &R^{n+1}_{\mu} =F'(\phi(t^{n+1}))-F'(\phi(t^{n}))-s_1(\phi(t^{n+1})-\phi(t^n)),\\&R^{n+1}_{\Phi} =   \phi_t(t^{n+1})-\frac{\phi(t^{n+1})-\phi(t^{n})}{\tau},\\
&R^{n+1}_{\psi} = \beta_2\frac{\Psi(t^{n+1})-2\Psi(t^{n})}{\tau} + \Psi(t^{n+1})-\beta_2 \psi_{tt}(t^{n+1})-\psi_t(t^{+1}),\\
&R^{n+1}_{\Gamma} = G'(\phi(t^{n+1}))-G'(\phi(t^{n}))-s_2(\psi(t^{n+1})-\psi(t^n)),\\
&R^{n+1}_{\Psi} =\psi_t(t^{n+1})-\frac{\psi(t^{n+1})-\psi(t^{n})}{\tau}.
    \end{align}

We assume that the exact solution of the system \eqref{2.9}-\eqref{2.14} possesses the following regularity,
\begin{equation}
\begin{aligned}
&\phi,\phi_t,\phi_{tt},\phi_{ttt} \in L^{\infty}(0,T;H^3(\Omega)),\\
&\Delta^{-1}\phi,\Delta^{-1}\phi_t,\Delta^{-1}\phi_{tt},\Delta^{-1}\phi_{ttt} \in L^{\infty}(0,T;H^3(\Omega)),\\
&\mu \in L^{\infty}(0,T;H^2(\Omega)),\\
&\psi,\psi_t,\psi_{tt},\psi_{ttt} \in L^{\infty}(0,T;H^3(\Gamma)),\\
&\Delta_{\Gamma}^{-1}\psi,\Delta_{\Gamma}^{-1}\psi_t,\Delta_{\Gamma}^{-1}\psi_{tt},\Delta_{\Gamma}^{-1}\psi_{ttt} \in L^{\infty}(0,T;H^3(\Gamma)),\\
&\mu_\Gamma \in L^{\infty}(0,T;H^2(\Gamma)).\\
    \end{aligned}
    \label{4.11}
\end{equation}

By using the Taylor expansion, the following lemma can be easily proven.


\begin{lemma}
Under the regularity assumption \eqref{4.11}, the truncation errors satisfy,
\begin{equation}
\begin{aligned}
&\|\nabla R_{\Phi,\tau}\|_{l^\infty(L^2({\Omega}))}+\|\nabla R_{\phi,\tau}\|_{l^\infty(L^2({\Omega}))}+\|\nabla R_{\mu,\tau}\|_{l^\infty(L^2({\Omega}))}\lesssim \tau,\\
&\|R_{\Phi,\tau}\|_{l^\infty(L^2({\Omega}))}+\|R_{\phi,\tau}\|_{l^\infty(L^2({\Omega}))}+\|R_{\mu,\tau}\|_{l^\infty(L^2({\Omega}))}\lesssim \tau,\\
&\|\nabla_{\Gamma}R_{\Psi,\tau}\|_{l^\infty(L^2({\Gamma}))}+\|\nabla_{\Gamma}R_{\psi,\tau}\|_{l^\infty(L^2({\Gamma}))}+\|\nabla_{\Gamma}R_{\Gamma,\tau}\|_{l^\infty(L^2({\Gamma}))}\lesssim \tau,\\
&\|R_{\Psi,\tau}\|_{l^\infty(L^2({\Gamma}))}+\|R_{\psi,\tau}\|_{l^\infty(L^2({\Gamma}))}+\|R_{\Gamma,\tau}\|_{l^\infty(L^2({\Gamma}))}\lesssim \tau,\\
&||\Delta ^{-1}R_{\phi,\tau}||_{l^\infty(L^2({\Omega}))}+||\Delta ^{-1}R_{\Phi,\tau}||_{l^\infty(L^2({\Omega}))}\lesssim \tau.\\
&||\Delta_{\Gamma} ^{-1}R_{\psi,\tau}||_{l^\infty(L^2({\Gamma}))}+||\Delta_{\Gamma} ^{-1}R_{\Psi,\tau}||_{l^\infty(L^2({\Gamma}))}\lesssim \tau,\\
&||\nabla\Delta ^{-1}R_{\phi,\tau}||_{l^\infty(L^2({\Omega}))}\lesssim \tau,\;\;||\nabla_{\Gamma}\Delta_{\Gamma}^{-1}R_{\psi,\tau}||_{l^\infty(L^2({\Gamma}))}\lesssim \tau.
    \end{aligned}
\end{equation}
Here the corresponding sequences of the truncation errors  are denoted as $\{R_{\Phi,\tau}\}$, $\{R_{\phi,\tau}\}$, $\{R_{\Psi,\tau}\}$, $\{R_{\psi,\tau}\}$, $\{R_{\mu,\tau}\}$, and $\{R_{\Gamma,\tau}\}$ with  the time step size $\tau$.
\end{lemma}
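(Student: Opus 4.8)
The plan is to prove each of the seven truncation-error bounds by Taylor expansion, treating the bulk terms and the boundary terms in parallel since the arguments are identical up to replacing $\Omega,\Delta^{-1},\nabla$ by $\Gamma,\Delta_{\Gamma}^{-1},\nabla_{\Gamma}$. I would begin with the ``consistency'' errors $R_{\Phi}^{n+1}$ and $R_{\Psi}^{n+1}$, which come from replacing the time derivative $\phi_t(t^{n+1})$ by the backward difference $\tau^{-1}(\phi(t^{n+1})-\phi(t^{n}))$. The standard Taylor remainder gives $R_{\Phi}^{n+1}=\tfrac{\tau}{2}\phi_{tt}(\zeta^{n+1})$ for some $\zeta^{n+1}\in(t^n,t^{n+1})$, written in integral form as $R_{\Phi}^{n+1}=\tfrac1\tau\int_{t^n}^{t^{n+1}}(t^n-s)\phi_{tt}(s)\,ds$. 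Taking the $L^2(\Omega)$ norm and invoking the assumed regularity $\phi_{tt}\in L^{\infty}(0,T;H^3(\Omega))$ yields $\|R_{\Phi}^{n+1}\|\lesssim\tau$; applying $\nabla$ (resp.\ $\Delta^{-1}$, $\nabla\Delta^{-1}$) under the integral and using the corresponding regularity line from \eqref{4.11} gives the gradient and inverse-Laplacian variants at the same order. The $R_{\mu}^{n+1}$ and $R_{\Gamma}^{n+1}$ terms are handled by expanding $F'(\phi(t^{n+1}))-F'(\phi(t^{n}))=F''(\xi)(\phi(t^{n+1})-\phi(t^{n}))$ and then writing $\phi(t^{n+1})-\phi(t^{n})=\int_{t^n}^{t^{n+1}}\phi_t(s)\,ds$, so both the difference-quotient part and the stabilizer part $s_1(\phi(t^{n+1})-\phi(t^{n}))$ are $O(\tau)$; the Lipschitz bound $|F''|\le L_1$ together with $\phi_t\in L^{\infty}(0,T;H^3(\Omega))$ closes this.

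Next I would treat the second-order relaxation consistency terms $R_{\phi}^{n+1}$ and $R_{\psi}^{n+1}$, which measure the error of approximating $\beta_1\phi_{tt}(t^{n+1})+\phi_t(t^{n+1})$ by $\beta_1\tau^{-1}(\Phi(t^{n+1})-\Phi(t^{n}))+\Phi(t^{n+1})$. Since $\Phi=\phi_t$, the term $\beta_1\tau^{-1}(\Phi(t^{n+1})-\Phi(t^{n}))$ is a backward difference of $\phi_t$, so its Taylor remainder is $\tfrac{\beta_1\tau}{2}\phi_{ttt}(\zeta^{n+1})$, and the remaining piece $\Phi(t^{n+1})-\phi_t(t^{n+1})$ vanishes identically. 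Hence $R_{\phi}^{n+1}=\tfrac1\tau\int_{t^n}^{t^{n+1}}(t^n-s)\,\beta_1\phi_{ttt}(s)\,ds$, and the assumed bound $\phi_{ttt}\in L^{\infty}(0,T;H^3(\Omega))$ delivers $\|\nabla R_{\phi}^{n+1}\|\lesssim\tau$ and $\|R_{\phi}^{n+1}\|\lesssim\tau$ as before. I would note in passing that the definition of $R_{\psi}^{n+1}$ in \eqref{4.5} as written contains the factor $2\Psi(t^n)$ and the typo $\psi_t(t^{+1})$; consistency forces the coefficient to be $\Psi(t^n)$ and the subscript $t^{n+1}$, and I would proceed with the corrected expression so that the $\beta_2$ part is genuinely a backward difference of $\psi_t$.

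Finally I would collect the six individual estimates into the seven displayed inequalities by taking the maximum over $0\le n\le N$ in the $l^{\infty}$ norm \eqref{equ:def_f_tau_norm}; since each bound is uniform in $n$ with constant depending only on the data and the $L^{\infty}(0,T;H^3)$ norms of the solution derivatives, the maximum is again $\lesssim\tau$. The only genuinely delicate point, rather than an obstacle, is bookkeeping the commutation of $\Delta^{-1}$, $\nabla$, and the time integral: one must verify that the regularity assumptions in \eqref{4.11} for $\Delta^{-1}\phi_t,\Delta^{-1}\phi_{tt},\Delta^{-1}\phi_{ttt}$ (and their $\Gamma$-analogues) are exactly what is needed so that applying $\nabla\Delta^{-1}$ to the integral remainder stays controlled, and that the inverse operators are well defined on each remainder because the zero-mean conditions $\int_\Omega\phi_t\,d\bx=0$, $\int_\Gamma\psi_t\,dS=0$ (established in Section \ref{sec2}) propagate to the remainders. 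Once that mean-zero compatibility is checked, every estimate is a one-line Taylor-plus-Cauchy--Schwarz argument, which is why the lemma is stated as ``easily proven.''
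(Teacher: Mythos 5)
Your proposal is correct and follows essentially the same route the paper intends: the paper gives no written proof beyond the remark that the lemma follows ``by using the Taylor expansion,'' and your integral-remainder Taylor argument combined with the regularity assumptions \eqref{4.11} is precisely that argument carried out in detail. Your additional observations --- correcting the typographical inconsistencies $2\Psi(t^n)$ and $\psi_t(t^{+1})$ in the definition of $R_{\psi}^{n+1}$, and verifying that the zero-mean conditions propagate to the remainders so that $\Delta^{-1}$ and $\Delta_{\Gamma}^{-1}$ are well defined on them --- are sound and fill in details the paper leaves implicit.
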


Then we establish the error estimate as follows.

\begin{theorem}
    If the exact solution is sufficiently smooth, or satisfies the assumption \eqref{4.11}, the solution $(\phi^m, \psi^m)$ for $0 \le m \le \left[\frac{T}{\tau}\right] - 1$ satisfies the following error estimate,
    \begin{equation}
\begin{aligned}
&\|e_{\phi,\tau}\|_{l^\infty(H^1({\Omega}))}+\|e_{\psi,\tau}\|_{l^\infty(H^1({\Gamma}))}\lesssim \tau,\\
&\|e_{\phi,\tau}\|_{l^\infty(L^2({\Omega}))}+\|e_{\psi,\tau}\|_{l^\infty(L^2({\Gamma}))}\lesssim \tau.\\
    \end{aligned}
\end{equation}
    Here, the error functions are denoted as,
\begin{equation}
\begin{aligned}
            & e^n_\phi=\phi(t^n)-\phi^n,\quad e^n_\mu=\mu(t^n)-\mu^n,\\
            & e^n_\psi=\psi(t^n)-\psi^n,\quad e^n_\Gamma=\mu_\Gamma(t^n)-\mu_\Gamma^n,\\
            & e^n_{\Phi}= \Phi(t^n)-\Phi^n,\quad e^n_{\Psi}=\Psi(t^n)-\Psi^n,\\
            & e^n_\phi|_\Gamma = e^n_\psi.
    \end{aligned}
\end{equation}
   Here the corresponding sequences of the error functions are defined as $\{e_{\phi,\tau}\}$, $\{e_{\Phi,\tau}\}$, $\{e_{\psi,\tau}\}$, $\{e_{\Psi,\tau}\}$, $\{e_{\mu,\tau}\}$, and $\{e_{\Gamma,\tau}\}$ with the time step $\tau$.

\end{theorem}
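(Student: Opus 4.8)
The plan is to reproduce, at the level of the error equations, the discrete energy estimate \eqref{3.12}--\eqref{3.19} that established the energy stability of the scheme. First I would subtract the scheme \eqref{3.1}--\eqref{3.8} from the truncated system \eqref{4.2}--\eqref{4.9} to obtain equations for $e^n_\phi,e^n_\mu,e^n_\psi,e^n_\Gamma,e^n_\Phi,e^n_\Psi$; in the bulk these read
\[
\beta_1\tfrac{e^{n+1}_\Phi-e^n_\Phi}{\tau}+e^{n+1}_\Phi = M_1\Delta e^{n+1}_\mu + R^{n+1}_\phi,\qquad e^{n+1}_\mu = -\Delta e^{n+1}_\phi + \big(F'(\phi(t^n))-F'(\phi^n)\big)+s_1(e^{n+1}_\phi-e^n_\phi)+R^{n+1}_\mu,
\]
with the analogous surface pair, together with the kinematic relations $e^{n+1}_\Phi=(e^{n+1}_\phi-e^n_\phi)/\tau+R^{n+1}_\Phi$ and $e^{n+1}_\Psi=(e^{n+1}_\psi-e^n_\psi)/\tau+R^{n+1}_\Psi$. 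The quantity I would control is the error energy
\[
\mathcal{E}^n_e=\tfrac12\|\nabla e^n_\phi\|^2+\tfrac{\beta_1}{2M_1}\|\nabla \Delta^{-1}e^n_\Phi\|^2+\tfrac12\|\nabla_\Gamma e^n_\psi\|_\Gamma^2+\tfrac{\beta_2}{2M_2}\|\nabla_\Gamma \Delta_\Gamma^{-1}e^n_\Psi\|_\Gamma^2 .
\]

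Because both the scheme and the exact flow conserve bulk and surface mass (under the zero-mean initial velocities) and the initial errors vanish, one has $\int_\Omega e^n_\Phi\,d\bx=0$ and $\int_\Gamma e^n_\Psi\,dS=0$, so $p^n_e:=\Delta^{-1}e^n_\Phi$ and $q^n_e:=\Delta_\Gamma^{-1}e^n_\Psi$ are well defined. Applying $\Delta^{-1}$ and $\Delta_\Gamma^{-1}$ to the two momentum-type error equations and pairing them in $L^2(\Omega)$ with $\tfrac1{M_1}e^{n+1}_\Phi$ and in $L^2(\Gamma)$ with $\tfrac1{M_2}e^{n+1}_\Psi$ — exactly as in \eqref{3.12}--\eqref{3.18} — and using the identity \eqref{id1} produces the telescoping gradient terms, the hyperbolic terms in $\nabla p_e,\nabla_\Gamma q_e$, the stabilization terms $\tfrac{s_1}{\tau}\|e^{n+1}_\phi-e^n_\phi\|^2$ and $\tfrac{s_2}{\tau}\|e^{n+1}_\psi-e^n_\psi\|_\Gamma^2$, and the dissipation $\tfrac1{M_1}\|\nabla p^{n+1}_e\|^2+\tfrac1{M_2}\|\nabla_\Gamma q^{n+1}_e\|_\Gamma^2$. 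The structural key is the boundary coupling: the bulk term $-\Delta e^{n+1}_\phi$ contributes $-(\partial_\bn e^{n+1}_\phi,e^{n+1}_\Phi)_\Gamma$ while the surface chemical potential contributes $+(\partial_\bn e^{n+1}_\phi,e^{n+1}_\Psi)_\Gamma$; since $e_\phi|_\Gamma=e_\psi$ and, crucially, $R^{n+1}_\Phi|_\Gamma=R^{n+1}_\Psi$ (because $\phi_t|_\Gamma=\psi_t$), these boundary integrals cancel identically, truncation parts included — the error-level analogue of the cancellation between \eqref{3.14} and \eqref{3.18}.

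For the right-hand side I would write the nonlinear difference as $F'(\phi(t^n))-F'(\phi^n)=F''(\zeta^n)e^n_\phi$ and use $|F''|\le L_1$: by Young's inequality $\tfrac1\tau|(F''(\zeta^n)e^n_\phi,e^{n+1}_\phi-e^n_\phi)_\Omega|\le\tfrac{s_1}{2\tau}\|e^{n+1}_\phi-e^n_\phi\|^2+\tfrac{L_1^2}{2s_1\tau}\|e^n_\phi\|^2$, whose first term is absorbed by the stabilization term $\tfrac{s_1}{\tau}\|e^{n+1}_\phi-e^n_\phi\|^2$, leaving a clean $\tfrac{L_1^2}{2s_1\tau}\|e^n_\phi\|^2$; the surface nonlinearity is handled identically with $L_2,s_2$. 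Every truncation term — the interior pairings against $\nabla e^{n+1}_\phi$ and $\nabla p^{n+1}_e$ and the $\Delta^{-1}$-weighted residuals — is bounded by Cauchy--Schwarz and Young using the residual estimates of the preceding Lemma, each contributing $O(\tau^2)$ plus a small multiple of $\mathcal{E}^{n+1}_e$ or $\mathcal{E}^n_e$. Multiplying the resulting inequality by $\tau$ gives a recursion $\mathcal{E}^{n+1}_e-\mathcal{E}^n_e+(\text{nonnegative})\le C\tau(\mathcal{E}^{n+1}_e+\mathcal{E}^n_e)+C\tau(\|e^n_\phi\|^2+\|e^n_\psi\|_\Gamma^2)+C\tau^3$.

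Since the errors are mean-zero, the Poincaré inequality gives $\|e^n_\phi\|^2\lesssim\|\nabla e^n_\phi\|^2\le2\mathcal{E}^n_e$ (and likewise on $\Gamma$), so the recursion closes as $\mathcal{E}^{n+1}_e-\mathcal{E}^n_e\le C\tau(\mathcal{E}^{n+1}_e+\mathcal{E}^n_e)+C\tau^3$; with $\mathcal{E}^0_e=0$ the discrete Gronwall inequality yields $\mathcal{E}^m_e\lesssim\tau^2$, hence $\|\nabla e^m_\phi\|^2+\|\nabla_\Gamma e^m_\psi\|_\Gamma^2\lesssim\tau^2$ uniformly in $m$. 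The full $H^1$ and $L^2$ estimates then follow by upgrading the gradient control through the Poincaré inequality in $\Omega$ and on $\Gamma$ (the errors being mean-zero) and taking the maximum over $m$. I expect the main obstacle to be the combination of the exact boundary cancellation at the error level — where the identity $R^{n+1}_\Phi|_\Gamma=R^{n+1}_\Psi$ is indispensable and easy to overlook — with the careful $\tau$-scaling of the nonlinear terms, so that their non-telescoping part is absorbed by the stabilizers while the remainder is fed into Gronwall via the Poincaré inequality; the hyperbolic $\Delta^{-1}$-weighted residual terms are then routine given the Lemma.
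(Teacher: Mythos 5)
Your overall framework---the error system, the inverse-Laplacian energy method mimicking the stability proof, the boundary cancellation via $e^n_\phi|_\Gamma=e^n_\psi$ together with $R^{n+1}_\Phi|_\Gamma=R^{n+1}_\Psi$, and a discrete Gr\"{o}nwall closure---is the same as the paper's, and your Poincar\'{e} route to the $L^2$ bound (the paper instead tests \eqref{4.21} and \eqref{4.22} with $\tau e^{n+1}_\phi$ and $\tau e^{n+1}_\psi$ so that $\|e_\phi\|$, $\|e_\psi\|_\Gamma$ ride along inside the Gr\"{o}nwall loop) would be an acceptable variant given the mean-zero property you invoke. However, there is a genuine gap at the nonlinear term, and it is precisely the step where the paper has to work hardest. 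You bound
\begin{equation*}
\frac{1}{\tau}\left|\big(F''(\zeta^n)e^n_\phi,\,e^{n+1}_\phi-e^n_\phi\big)_\Omega\right|
\le \frac{s_1}{2\tau}\|e^{n+1}_\phi-e^n_\phi\|^2+\frac{L_1^2}{2s_1\tau}\|e^n_\phi\|^2,
\end{equation*}
absorb the first piece into the stabilization term, and keep the second. But after multiplying the inequality by $\tau$, that remainder is $\frac{L_1^2}{2s_1}\|e^n_\phi\|^2$: it carries \emph{no} factor of $\tau$. Your claimed recursion $\mathcal{E}^{n+1}_e-\mathcal{E}^n_e\le C\tau(\mathcal{E}^{n+1}_e+\mathcal{E}^n_e)+C\tau(\|e^n_\phi\|^2+\|e^n_\psi\|_\Gamma^2)+C\tau^3$ therefore does not follow from your estimates; what follows is $\mathcal{E}^{n+1}_e-\mathcal{E}^n_e\le C(\mathcal{E}^{n+1}_e+\mathcal{E}^n_e)+C\tau^3$, and Gr\"{o}nwall then produces a constant of size $e^{CT/\tau}$, which blows up as $\tau\to0$ and yields no rate at all. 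No re-weighting of Young's inequality rescues this pairing: keeping a $\tau$ in front of $\|e^n_\phi\|^2$ forces a weight of order $1/\tau$ on $\|e^{n+1}_\phi-e^n_\phi\|^2$, which the stabilizer $\frac{s_1}{\tau}\|e^{n+1}_\phi-e^n_\phi\|^2$ cannot absorb uniformly in $\tau$.

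The ingredient your proposal is missing is the paper's treatment of its term $A_3$ (equations \eqref{4.44}--\eqref{4.45}): use the kinematic relation to write $e^{n+1}_\phi-e^n_\phi=\tau(e^{n+1}_\Phi-R^{n+1}_\Phi)=\tau(\Delta u^{n+1}-R^{n+1}_\Phi)$, integrate by parts (legitimate since $\partial_{\bn}u^{n+1}|_\Gamma=0$ by the definition of $\Delta^{-1}$) so the nonlinear pairing becomes $\tau(\nabla H^n,\nabla u^{n+1})_\Omega+\tau(H^n,R^{n+1}_\Phi)_\Omega$, and then apply Young so that $\frac{\tau}{4M_1}\|\nabla u^{n+1}\|^2$ is absorbed by the $\tau$-weighted dissipation term while the rest is $C\tau(\|e^n_\phi\|^2+\|\nabla e^n_\phi\|^2)+C\tau^3$---every surviving term keeps its factor of $\tau$. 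This is also why the paper needs the gradient estimate $\|\nabla H^n\|\lesssim\|e^n_\phi\|+\|\nabla e^n_\phi\|$ (equations \eqref{4.38}--\eqref{4.40}), which uses the Lipschitz bound on $F''$ and the $H^3$ regularity of $\phi$; your argument never derives it because your (flawed) bound only needs $\|H^n\|\lesssim\|e^n_\phi\|$. The same correction is required for the surface nonlinearity, and you should apply it to the residual pairings as well: your statement that each truncation term contributes ``$O(\tau^2)$'' per step is not good enough (summing $O(\tau^2)$ over $m\sim1/\tau$ steps gives $\mathcal{E}^m\lesssim\tau$, i.e.\ only half-order); the per-step contribution must be $O(\tau^3)$, which is exactly what pairing the residuals against $\nabla u^{n+1}$ and $\nabla_\Gamma v^{n+1}$, rather than against the stabilizers, achieves in the paper's estimates of $A_1$ and $A_2$.
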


\begin{proof}
    Using mathematical induction, we prove that when $m=0$, we have $||e_\phi^0|| = ||e_\psi^0||_{\Gamma} = ||\nabla e_\phi^0|| = ||\nabla_\Gamma e^0_\psi||_{\Gamma} = 0$. Clearly, the above inequality holds. Now, assume that the error inequality holds for all $n \le m$. We need to show that the inequality also holds for $e^{m+1}_\phi$ and $e^{m+1}_\psi$. For all $n \le m$, by combining the numerical schemes and the truncation error equations, we obtain the following error equation,
    \begin{align}
&\frac{\beta_1}{\tau}(e_{\Phi}^{n+1}-e_{\Phi}^{n})+e_{\Phi}^{n+1}=M_1\Delta e_{\mu}^{n+1}+R_{\phi}^{n+1},& \text { in } \Omega,  \label{4.15}\\
&e_{\mu}^{n+1}=-\Delta e_{\phi}^{n+1}+F^{\prime}(\phi(t^{n}))-F^{\prime}(\phi^{n})+s_{1}(e_{\phi}^{n+1}-e_{\phi}^{n})+R_{\mu}^{n+1},& \text { in } \Omega, \label{4.16} \\
&e_{\Phi}^{n+1} = \frac{1}{\tau}(e_{\phi}^{n+1}-e_{\phi}^{n}) + R_{\Phi}^{n+1},& \text { in } \Omega,\label{4.21}\\
&\partial_{\mathbf{n}}e_{\mu}^{n+1}=0,& \text { on } \Gamma,\label{4.17} \\
&e_{\phi}^{n+1}|_{\Gamma}=e_{\psi}^{n+1},& \text { on } \Gamma,\label{4.18} \\
&\frac{\beta_2}{\tau}(e_{\Psi}^{n+1}-e^n_{\Psi})+e_{\Psi}^{n+1} =M_2\Delta_{\Gamma}e_{\Gamma}^{n+1}+R_{\psi}^{n+1},& \text { on } \Gamma, \label{4.19} \\
&e_{\Gamma}^{n+1}=-\Delta_{\Gamma}e_{\psi}^{n+1}+G^{\prime}(\psi(t^{n}))-G^{\prime}(\psi^{n})+\partial_{\mathbf{n}}e_{\phi}^{n+1}+s_{2}(e_{\psi}^{n+1}-e_{\psi}^{n})+R_{\Gamma}^{n+1},& \text { on } \Gamma,\label{4.20}\\
&e_{\Psi}^{n+1} = \frac{1}{\tau}(e_{\psi}^{n+1}-e_{\psi}^{n}) + R_{\Psi}^{n+1},& \text { on } \Gamma.\label{4.22}
\end{align}
    By applying the inverse Laplace 
 operator $\Delta^{-1}$ to \eqref{4.15} , we obtain
\begin{equation}
\frac{\beta_1}{\tau}\Delta^{-1}(e_\Phi^{n+1}-e_\Phi^{n})+\Delta^{-1}e_\Phi^{n}=M_1 e_{\mu}^{n+1}+\Delta^{-1}R_{\phi}^{n+1}.
    \label{4.23}
\end{equation}
    By taking the $L^2(\Omega)$ inner product of \eqref{4.23} with $\frac{e^{n+1}_\phi-e^{n}_\phi}{M_1}$, we have
\begin{align}
&\frac{\beta_1}{M_1\tau}(\Delta^{-1}(e_\Phi^{n+1}-e_\Phi^{n}),e^{n+1}_\phi-e^{n}_\phi)_\Omega+\frac{1}{M_1}(\Delta^{-1}e_{\Phi}^{n+1},e^{n+1}_\phi-e^{n}_\phi)_\Omega \notag\\
 =& (e_{\mu}^{n+1},e^{n+1}_\phi-e^{n}_\phi)_\Omega+\frac{1}{M_1}(\Delta^{-1}R_{\phi}^{n+1},{e^{n+1}_\phi-e^{n}_\phi})_{\Omega}. \label{4.24}
    \end{align}
    To simplify the left hand side of the \eqref{4.24}, using \eqref{4.21} and letting $u^{n+1} =\Delta^{-1}e^{n+1}_\Phi$, we deduce
\begin{align}
&\frac{\beta_1}{M_1\tau}(\Delta^{-1}(e_\Phi^{n+1}-e_\Phi^{n}),e^{n+1}_\phi-e^{n}_\phi)_\Omega =\frac{\beta_1}{M_1}(\Delta^{-1}(e_\Phi^{n+1}-e_\Phi^{n}),e_\Phi^{n+1}-R_\Phi^{n+1})_\Omega\notag\\
=&\frac{\beta_1}{M_1}(u^{n+1}-u^{n},\Delta u^{n+1})_\Omega-\frac{\beta_1}{M_1}(\Delta^{-1}(e_\Phi^{n+1}-e_\Phi^{n}),R_{\Phi}^{n+1})_\Omega\notag\\
        =&-\frac{\beta_1}{M_1}(\nabla u^{n+1}-\nabla u^{n},\nabla u^{n+1})_\Omega -\frac{\beta_1}{M_1}(e_\Phi^{n+1}-e_\Phi^{n},\Delta^{-1}R_{\Phi}^{n+1})_\Omega\notag\\
        =&-\frac{\beta_1}{2M_1}(\|\nabla u^{n+1}\|^2-\|\nabla u^{n}\|^2+\|\nabla u^{n+1}-\nabla u^{n}\|^2)-\frac{\beta_1}{M_1}(e_\Phi^{n+1}-e_\Phi^{n},\Delta^{-1}R_{\Phi}^{n+1})_\Omega.  \label{4.25}
    \end{align}
    and
\begin{align}
        &\frac{1}{M_1}(\Delta^{-1}e_{\Phi}^{n+1},e^{n+1}_\phi-e^{n}_\phi)_\Omega=\frac{\tau}{M_1}(\Delta^{-1}e_{\Phi}^{n+1},e^{n+1}_\Phi-R_{\Phi}^{n+1})_\Omega\notag\\
    =&\frac{\tau}{M_1}(u^{n+1},\Delta u^{n+1})_\Omega-\frac{\tau}{M_1}(\Delta^{-1}e_{\Phi}^{n+1},R_{\Phi}^{n+1})_\Omega\notag\\
        =&-\frac{\tau}{M_1}\|\nabla u^{n+1}\|^2-\frac{\tau}{M_1}(e_{\Phi}^{n+1},\Delta^{-1}R_{\Phi}^{n+1})_\Omega.\label{4.26}
    \end{align}

    Thus, by combining \eqref{4.25} with \eqref{4.26} and using \eqref{4.15}, the left hand side of  \eqref{4.24} can be written as
\begin{align}
&\frac{\beta_1}{M_1\tau}(\Delta^{-1}(e_\Phi^{n+1}-e_\Phi^{n}),e^{n+1}_\phi-e^{n}_\phi)_\Omega+\frac{1}{M_1}(\Delta^{-1}e_{\Phi}^{n+1},e^{n+1}_\phi-e^{n}_\phi)_\Omega \notag\\
= &-\frac{\beta_1}{2M_1}(\|\nabla u^{n+1}\|^2-\|\nabla u^{n}\|^2+\|\nabla u^{n+1}-\nabla u^{n}\|^2)-\frac{\beta_1}{M_1}(e_\Phi^{n+1}-e_\Phi^{n},\Delta^{-1}R_{\Phi}^{n+1})_\Omega\notag\\
        &-\frac{\tau}{M_1}\|\nabla u^{n+1}\|^2-\frac{\tau}{M_1}(e_{\Phi}^{n+1},\Delta^{-1}R_{\Phi}^{n+1})_\Omega\notag\\
        =&-\frac{\beta_1}{2M_1}(\|\nabla u^{n+1}\|^2-\|\nabla u^{n}\|^2+\|\nabla u^{n+1}-\nabla u^{n}\|^2)-\frac{\tau}{M_1}\|\nabla u^{n+1}\|^2\notag\\
        &-\frac{\tau}{M_1}(M_1\Delta e_{\mu}^{n+1}+R_{\phi}^{n+1},\Delta^{-1}R_{\Phi}^{n+1})_\Omega.\label{4.27}
        \end{align}
    By using \eqref{4.16}, we obtain the right hand side of \eqref{4.24},
        \begin{align}
        &(e_{\mu}^{n+1},e^{n+1}_\phi-e^{n}_\phi)_\Omega+\frac{1}{M_1}(\Delta^{-1}R_{\phi}^{n+1},{e^{n+1}_\phi-e^{n}_\phi})_\Omega\notag\\
        =&(-\Delta e_{\phi}^{n+1}+(F^{\prime}(\phi(t^{n}))-F^{\prime}(\phi^{n}))+s_{1}(e_{\phi}^{n+1}-e_{\phi}^{n})+R_{\mu}^{n+1}+\frac{\Delta^{-1}R_{\phi}^{n+1}}{M_1},{e^{n+1}_\phi-e^{n}_\phi})_\Omega\notag\\
        =&(-\Delta e_{\phi}^{n+1},{e^{n+1}_\phi-e^{n}_\phi})_\Omega+((F^{\prime}(\phi(t^{n}))-F^{\prime}(\phi^{n})),{e^{n+1}_\phi-e^{n}_\phi})_\Omega\notag\\
        &+s_{1}((e_{\phi}^{n+1}-e_{\phi}^{n}),{e^{n+1}_\phi-e^{n}_\phi})_\Omega+(R_{\mu}^{n+1}+\frac{\Delta^{-1}R_{\phi}^{n+1}}{M_1},{e^{n+1}_\phi-e^{n}_\phi})_\Omega\notag\\
        =&-(\partial_\mathbf{n} e_\phi^{n+1},{e^{n+1}_\phi-e^{n}_\phi})_\Gamma+\frac12(\|\nabla e^{n+1}_\phi\|^2-\|\nabla e_\phi^{n}\|^2+\|\nabla e_\phi^{n+1}-\nabla e_\phi^{n}\|^2)+{s_1}\|e_\phi^{n+1}-e_\phi^n\|^2\notag\\
    &+((F^{\prime}(\phi(t^{n}))-F^{\prime}(\phi^{n})),{e^{n+1}_\phi-e^{n}_\phi})_\Omega+(R_{\mu}^{n+1}+\frac{\Delta^{-1}R_{\phi}^{n+1}}{M_1},{e^{n+1}_\phi-e^{n}_\phi})_\Omega.\label{4.28}
        \end{align}
    By combining \eqref{4.27} and \eqref{4.28}, we have
        \begin{align}
        &\frac{\beta_1}{2M_1}(\|\nabla u^{n+1}\|^2-\|\nabla u^{n}\|^2+\|\nabla u^{n+1}-\nabla u^{n}\|^2) +\frac{\tau}{M_1}\|\nabla u^{n+1}\|^2\notag\\
        &+\frac12(\|\nabla e^{n+1}_\phi\|^2-\|\nabla e_\phi^{n}\|^2+\|\nabla e_\phi^{n+1}-\nabla e_\phi^{n}\|^2)+{s_1}\|e_\phi^{n+1}-e_\phi^{n}\|^2\notag\\
        =&(\partial_{\bn} e_\phi^{n+1},{e^{n+1}_\phi-e^{n}_\phi})_\Gamma-((F^{\prime}(\phi(t^{n}))-F^{\prime}(\phi^{n})),{e^{n+1}_\phi-e^{n}_\phi})_\Omega\notag\\
        &-(R_{\mu}^{n+1}+\frac{\Delta^{-1}R_{\phi}^{n+1}}{M_1},{e^{n+1}_\phi-e^{n}_\phi})_\Omega-\frac{\tau}{M_1}(M_1\Delta e_{\mu}^{n+1}+R_{\phi}^{n+1},\Delta^{-1}R_{\Phi}^{n+1})_\Omega.\label{4.29}
        \end{align}
    Then taking the $L^2(\Omega)$ inner product of \eqref{4.21} with $\tau e_{\phi}^{n+1}$, we have
        \begin{align}
    (e_{\phi}^{n+1}-e_{\phi}^{n},e_{\phi}^{n+1})_{\Omega}=& \frac{1}{2} (||e_{\phi}^{n+1}||^2-||e_{\phi}^{n}||^2+
    ||e_{\phi}^{n+1}-e_{\phi}^{n}||^2)\notag\\      =&\tau(e_{\Phi}^{n+1},e_{\phi}^{n+1})_{\Omega}-\tau(R_{\Phi}^{n+1},e_{\phi}^{n+1})_{\Omega}\notag\\
    =&-\tau(\nabla u^{n+1},\nabla e_{\phi}^{n+1})_{\Omega}-\tau(R_{\Phi}^{n+1},e_{\phi}^{n+1})_{\Omega}.\label{4.21s}
        \end{align}
  
    Similarly, by applying the inverse Laplace Beltrami operator $\Delta_\Gamma^{-1}$ to \eqref{4.19} , we obtain
    \begin{equation}
        \frac{\beta_2}{\tau}\Delta_\Gamma^{-1}(e_\Psi^{n+1}-e_\Psi^{n})+\Delta_\Gamma^{-1}e_{\Psi}^{n+1}=M_2 e_{\Gamma}^{n+1}+\Delta^{-1}_\Gamma R_{\psi}^{n+1}.
        \label{4.30}
    \end{equation}
    By taking the $L^2(\Gamma)$ inner product of \eqref{4.30} with $\frac{e^{n+1}_\psi-e^{n}_\psi}{M_2}$, we get
        \begin{align}
        &\frac{\beta_2}{M_2\tau}(\Delta_\Gamma^{-1}(e_\Psi^{n+1}-e_\Psi^{n}),e^{n+1}_\psi-e^{n}_\psi)_\Gamma+\frac{1}{M_2}(\Delta_\Gamma^{-1}e_{\Psi}^{n+1},e^{n+1}_\psi-e^{n}_\psi)_\Gamma \notag\\
        &= (e_{\Gamma}^{n+1},{e^{n+1}_\psi-e^{n}_\psi})_\Gamma+(\Delta_\Gamma^{-1}R_{\psi}^{n+1},\frac{e^{n+1}_\psi-e^{n}_\psi}{M_2})_\Gamma.\label{4.31}
        \end{align}
    To simplify the left hand side of the \eqref{4.31}, using \eqref{4.22} and letting $v^{n+1} =\Delta_\Gamma^{-1}e_\Psi^{n+1}$, we obtain
        \begin{align}
        &\frac{\beta_2}{M_2\tau}(\Delta_\Gamma^{-1}(e_\Psi^{n+1}-e_\Psi^{n}),e^{n+1}_\psi-e^{n}_\psi)_\Gamma=\frac{\beta_2}{M_2}(\Delta_\Gamma^{-1}(e_\Psi^{n+1}-e_\Psi^{n}),e^{n+1}_\Psi-R^{n+1}_\Psi)_\Gamma \notag\\
        =&\frac{\beta_2}{M_2}(v^{n+1}-v^{n},\Delta_\Gamma v^{n+1})_\Gamma-\frac{\beta_2}{M_2}(\Delta_\Gamma^{-1}(e_\Psi^{n+1}-e_\Psi^{n}),R^{n+1}_\Psi)_\Gamma\notag\\
        =&-\frac{\beta_2}{M_2}(\nabla_\Gamma v^{n+1}-\nabla_\Gamma v^{n},\nabla_\Gamma v^{n+1})_\Gamma -\frac{\beta_2}{M_2}((e_\Psi^{n+1}-e_\Psi^{n}),\Delta_\Gamma^{-1}R^{n+1}_\Psi)_\Gamma\notag\\
        =&-\frac{\beta_2}{2M_2}(\|\nabla_\Gamma v^{n+1}\|_\Gamma^2-\|\nabla_\Gamma v^{n}\|_\Gamma^2+\|\nabla_\Gamma (v^{n+1}- v^{n})\|_\Gamma^2) -\frac{\beta_2}{M_2}((e_\Psi^{n+1}-e_\Psi^{n}),\Delta_\Gamma^{-1}R^{n+1}_\Psi)_\Gamma,\label{4.32}
        \end{align}
    and
        \begin{align}
        &\frac{1}{M_2}(\Delta_\Gamma^{-1}e_{\Psi}^{n+1},e^{n+1}_\psi-e^{n}_\psi)_\Gamma=\frac{\tau}{M_2}(\Delta_\Gamma^{-1}e_{\Psi}^{n+1},e^{n+1}_\Psi-R^{n+1}_\Psi)_\Gamma\notag\\
        =&\frac{\tau}{M_2}(v^{n+1},\Delta_\Gamma v^{n+1})_\Gamma-\frac{\tau}{M_2}(\Delta_\Gamma^{-1}e_{\Psi}^{n+1},R^{n+1}_\Psi)_\Gamma
        =-\frac{\tau}{M_2}\|\nabla_\Gamma v^{n+1}\|_\Gamma^2-\frac{\tau}{M_2}(e_{\Psi}^{n+1},\Delta_\Gamma^{-1}R^{n+1}_\Psi)_\Gamma.\label{4.33}
        \end{align}

    By combining \eqref{4.32} with \eqref{4.33} and using \eqref{4.19}, the left hand side of the \eqref{4.31} can be written as

        \begin{align}
        &\frac{\beta_2}{M_2\tau}(\Delta_\Gamma^{-1}(e_\Psi^{n+1}-e_\Psi^{n}),e^{n+1}_\psi-e^{n}_\psi)_\Gamma+\frac{1}{M_2}(\Delta_\Gamma^{-1}e_{\Psi}^{n+1},e^{n+1}_\psi-e^{n}_\psi)_\Gamma \notag\\
        =&-\frac{\beta_2}{2M_2}(\|\nabla_\Gamma v^{n+1}\|_\Gamma^2-\|\nabla_\Gamma v^{n}\|_\Gamma^2+\|\nabla_\Gamma (v^{n+1}-v^{n})\|_\Gamma^2) -\frac{\beta_2}{M_2}((e_\Psi^{n+1}-e_\Psi^{n}),\Delta_\Gamma^{-1}R^{n+1}_\Psi)_\Gamma\notag\\
        &-\frac{\tau}{M_2}\|\nabla_\Gamma v^{n+1}\|_\Gamma^2-\frac{\tau}{M_2}(e_{\Psi}^{n+1},\Delta_\Gamma^{-1}R^{n+1}_\Psi)_\Gamma\notag\\
        =&-\frac{\beta_2}{2M_2}(\|\nabla_\Gamma v^{n+1}\|_\Gamma^2-\|\nabla_\Gamma v^{n}\|_\Gamma^2+\|\nabla_\Gamma (v^{n+1}-v^{n})\|_\Gamma^2)-\frac{\tau}{M_2}\|\nabla_\Gamma v^{n+1}\|_\Gamma^2\notag\\
        &-\frac{\tau}{M_2}(M_2\Delta_\Gamma e_{\Gamma}^{n+1}+R_{\psi}^{n+1},\Delta_\Gamma^{-1}R_{\Psi}^{n+1})_\Gamma.\label{4.34}
        \end{align}
    By using \eqref{4.20}, we obtain the right hand side of the \eqref{4.31},
        \begin{align}
        &(e_{\Gamma}^{n+1},{e^{n+1}_\psi-e^{n}_\psi})_\Gamma+(\Delta_\Gamma^{-1}R_{\psi}^{n+1},\frac{e^{n+1}_\psi-e^{n}_\psi}{M_2})_\Gamma \notag\\
        =&(-\Delta_{\Gamma}e_{\psi}^{n+1}+G^{\prime}(\psi(t^{n}))-G^{\prime}(\psi^{n})+\partial_{\mathbf{n}}e_{\phi}^{n+1}+s_{2}(e_{\psi}^{n+1}-e_{\psi}^{n}),{e^{n+1}_\psi-e^{n}_\psi})_\Gamma\notag\\
&+(R_{\Gamma}^{n+1}+\frac{\Delta_\Gamma^{-1}R_{\psi}^{n+1}}{M_2},{e^{n+1}_\psi-e^{n}_\psi})_\Gamma\notag\\
        =&(-\Delta_\Gamma e_{\psi}^{n+1},{e^{n+1}_\psi-e^{n}_\psi})_\Gamma+(\partial_{\bn} e_\phi^{n+1},{e^{n+1}_\psi-e^{n}_\psi})_\Gamma+((G^{\prime}(\psi(t^{n}))-G^{\prime}(\psi^{n})),{e^{n+1}_\psi-e^{n}_\psi})_\Gamma\notag\\
        &+(s_{2}(e_{\psi}^{n+1}-e_{\psi}^{n}),{e^{n+1}_\psi-e^{n}_\psi})_\Gamma+(R_{\Gamma}^{n+1}+\frac{\Delta_\Gamma^{-1}R_{\psi}^{n+1}}{M_2},{e^{n+1}_\psi-e^{n}_\psi})_\Gamma\notag\\
        =&(\partial_{\bn} e_\phi^{n+1},{e^{n+1}_\psi-e^{n}_\psi})_\Gamma+\frac12(\|\nabla_\Gamma e^{n+1}_\psi\|_\Omega^2-\|\nabla_\Gamma e_\psi^{n}\|_\Gamma^2+\|\nabla_\Gamma (e_\psi^{n+1}-e_\psi^{n})\|_\Gamma^2)+{s_2}\|e_\psi^{n+1}-e_\psi^n\|^2_\Gamma\notag\\
        &+((G^{\prime}(\psi(t^{n}))-G^{\prime}(\psi^{n})),{e^{n+1}_\psi-e^{n}_\psi})_\Gamma+(R_{\Gamma}^{n+1}+\frac{\Delta_\Gamma^{-1}R_{\psi}^{n+1}}{M_2},{e^{n+1}_\psi-e^{n}_\psi})_\Gamma.\label{4.35}
        \end{align}
    By combining \eqref{4.34} and \eqref{4.35}, we have
        \begin{align}
        &\frac{\beta_2}{2M_2}(\|\nabla_\Gamma v^{n+1}\|_\Gamma^2-\|\nabla_\Gamma v^{n}\|_\Gamma^2+\|\nabla_\Gamma (v^{n+1}- v^{n})\|_\Gamma^2)+\frac{\tau}{M_2}\|\nabla_\Gamma v^{n+1}\|_\Gamma^2\notag\\
        &+\frac12(\|\nabla_\Gamma e^{n+1}_\psi\|_\Omega^2-\|\nabla_\Gamma e_\psi^{n}\|_\Gamma^2+\|\nabla_\Gamma (e_\psi^{n+1}- e_\psi^{n})\|_\Gamma^2)+{s_2}\|e_\psi^{n+1}-e_\psi^n\|^2_\Gamma\notag\\
        =&-(\partial_{\bn} e_\phi^{n+1},{e^{n+1}_\psi-e^{n}_\psi})_\Gamma-((G^{\prime}(\psi(t^{n}))-G^{\prime}(\psi^{n})),{e^{n+1}_\psi-e^{n}_\psi})_\Gamma\notag\\
        &-(R_{\Gamma}^{n+1}+\frac{\Delta_\Gamma^{-1}R_{\psi}^{n+1}}{M_2},{e^{n+1}_\psi-e^{n}_\psi})_\Gamma-\frac{\tau}{M_2}(M_2\Delta_\Gamma e_{\Gamma}^{n+1}+R_{\psi}^{n+1},\Delta_\Gamma^{-1}R_{\Psi}^{n+1})_\Gamma.\label{4.36}
        \end{align}
    By taking the $L^2(\Gamma)$ inner product of \eqref{4.22} with $\tau e_{\psi}^{n+1}$, we have
        \begin{align}
    (e_{\psi}^{n+1}-e_{\psi}^{n},e_{\psi}^{n+1})_{\Gamma}=& \frac{1}{2} (||e_{\psi}^{n+1}||_{\Gamma}^2-||e_{\psi}^{n}||_{\Gamma}^2+
    ||e_{\psi}^{n+1}-e_{\psi}^{n}||_{\Gamma}^2)\notag\\      =&\tau(e_{\Psi}^{n+1},e_{\psi}^{n+1})_{\Gamma}-\tau(R_{\Psi}^{n+1},e_{\psi}^{n+1})_{\Gamma}\notag\\
    =&-\tau(\nabla_{\Gamma} v^{n+1},\nabla_{\Gamma}e_{\psi}^{n+1})_{\Gamma}-\tau(R_{\Psi}^{n+1},e_{\psi}^{n+1})_{\Gamma}.\label{4.22s}
        \end{align}
   
    By combining \eqref{4.29}, \eqref{4.21s}, \eqref{4.36}, and \eqref{4.22s}, we can
obtain
        \begin{align}
        &\frac{\beta_1}{2M_1}(\|\nabla u^{n+1}\|^2-\|\nabla u^{n}\|^2+\|\nabla (u^{n+1}- u^{n})\|^2) +\frac{\tau}{M_1}\|\nabla u^{n+1}\|^2\notag\\
        &+\frac12(\|\nabla e^{n+1}_\phi\|^2-\|\nabla e_\phi^{n}\|^2+\|\nabla (e_\phi^{n+1}- e_\phi^{n})\|^2)+{s_1}\|e_\phi^{n+1}-e_\phi^n\|^2\notag\\
        &+\frac{1}{2} (||e_{\phi}^{n+1}||^2-||e_{\phi}^{n}||^2+
    ||e_{\phi}^{n+1}-e_{\phi}^{n}||^2)\notag\\
        &+\frac{\beta_2}{2M_2}(\|\nabla_\Gamma v^{n+1}\|_\Gamma^2-\|\nabla_\Gamma v^{n}\|_\Gamma^2+\|\nabla_\Gamma (v^{n+1}-v^{n})\|_\Gamma^2)+\frac{\tau}{M_2}\|\nabla_\Gamma v^{n+1}\|_\Gamma^2\notag\\
        &+\frac12(\|\nabla_\Gamma e^{n+1}_\psi\|_\Gamma^2-\|\nabla_\Gamma e_\psi^{n}\|_\Gamma^2+\|\nabla_\Gamma (e_\psi^{n+1}- e_\psi^{n})\|_\Gamma^2)+{s_2}\|e_\psi^{n+1}-e_\psi^n\|^2_\Gamma\notag\\
        &+\frac{1}{2} (||e_{\psi}^{n+1}||_{\Gamma}^2-||e_{\psi}^{n}||_{\Gamma}^2+
    ||e_{\psi}^{n+1}-e_{\psi}^{n}||_{\Gamma}^2)\notag\\
        =&-\frac{\tau}{M_1}(M_1\Delta e_{\mu}^{n+1}+R_{\phi}^{n+1},\Delta^{-1}R_{\Phi}^{n+1})_\Omega-\frac{\tau}{M_2}(M_2\Delta_\Gamma e_{\Gamma}^{n+1}+R_{\psi}^{n+1},\Delta_\Gamma^{-1}R_{\Psi}^{n+1})_\Gamma  (:=\text{term } A_{1}) \notag\\
        &-(R_{\mu}^{n+1}+\frac{\Delta^{-1}R_{\phi}^{n+1}}{M_1},{e^{n+1}_\phi-e^{n}_\phi})_\Omega-(R_{\Gamma}^{n+1}+\frac{\Delta_\Gamma^{-1}R_{\psi}^{n+1}}{M_2},{e^{n+1}_\psi-e^{n}_\psi})_\Gamma(:=\text{term } A_{2})\notag\\
        &-((F^{\prime}(\phi(t^{n}))-F^{\prime}(\phi^{n})),{e^{n+1}_\phi-e^{n}_\phi})_\Omega-((G^{\prime}(\psi(t^{n}))-G^{\prime}(\psi^{n})),{e^{n+1}_\psi-e^{n}_\psi})_\Gamma (:=\text{term } A_{3})\notag\\
&-\tau(\nabla u^{n+1},\nabla e_{\phi}^{n+1})_{\Omega}-\tau(\nabla_{\Gamma} v^{n+1},\nabla_{\Gamma}e_{\psi}^{n+1})_{\Gamma}(:=\text{term } A_4)\notag\\
&-\tau(R_{\Phi}^{n+1},e_{\phi}^{n+1})_{\Omega}-\tau(R_{\Psi}^{n+1},e_{\psi}^{n+1})_{\Gamma}(:=\text{term } A_5)\label{4.37}
        \end{align}

    To simplify the calculations, we define $H^n = F'(\phi(t^n)) - F'(\phi^n)$. Then it can be rewritten  as

    \begin{equation}
            H^n = e_\phi^n\int_0^1F''(s\phi(t^n)+(1-s)\phi^n)ds.
        \label{4.38}
    \end{equation}
    We obtain $\|H^n\| \lesssim \|e_\phi^n\|$ since $F''$ is bounded. By taking the gradient of $H^n$, we have
\begin{align}
            \nabla H^n&=F''(\phi(t^n))\nabla\phi(t^n)-F''(\phi^n)\nabla \phi^n\notag\\&=(F''(\phi(t^n))-F''(\phi^n))\nabla\phi(t^n)+F''(\phi^n)\nabla e_\phi^n.\label{4.39}
        \end{align}

    Since $F''$ is bounded and satisfies the Lipschitz condition as well as condition \eqref{4.11}, we have

    \begin{equation}
            \|\nabla H^n\| \lesssim \|e_\phi^n\|\|\phi(t^n)\|_{H^3(\Omega)}+\|\nabla e^n_\phi\|\lesssim\|e_\phi^n\|+\|\nabla e^n_\phi\|.
        \label{4.40}
    \end{equation}
    Similarly, we define $\tilde{H}^n = G'(\psi(t^n)) - G'(\psi^n)$. Since $G''$ is bounded and satisfies the Lipschitz condition as well as condition \eqref{4.11}, we have

        \begin{align}
            \|\tilde{H}^n\|_\Gamma \lesssim \|e_\psi^n\|_\Gamma,\quad \|\nabla_\Gamma \tilde{H}^n\|_\Gamma \lesssim\|e_\psi^n\|_\Gamma+\|\nabla e^n_\psi\|_\Gamma.\label{4.41}
        \end{align}

    For the term $A_1$, we have
    \begin{align}
            &-\frac{\tau}{M_1}(M_1\Delta e_{\mu}^{n+1}+R_{\phi}^{n+1},\Delta^{-1}R_{\Phi}^{n+1})_\Omega-\frac{\tau}{M_2}(M_2\Delta_\Gamma e_{\Gamma}^{n+1}+R_{\psi}^{n+1},\Delta_\Gamma ^{-1}R_{\Psi}^{n+1})_\Gamma\notag\notag\\
            =&-\tau(e_{\mu}^{n+1},R_{\Phi}^{n+1})_\Omega-\frac{\tau}{M_1}(R_{\phi}^{n+1},\Delta^{-1}R_{\Phi}^{n+1})_\Omega-\tau(e_{\Gamma}^{n+1},R_{\Psi}^{n+1})_\Gamma-\frac{\tau}{M_2}(R_{\psi}^{n+1},\Delta_\Gamma ^{-1}R_{\Psi}^{n+1})_\Gamma\notag\\
            =&-\tau(-\Delta e_{\phi}^{n+1}+(F^{\prime}(\phi(t^{n}))-F^{\prime}(\phi^{n}))+s_{1}(e_{\phi}^{n+1}-e_{\phi}^{n})+R_{\mu}^{n+1},R_{\Phi}^{n+1})_\Omega\notag\\
            &-\tau(-\Delta_{\Gamma}e_{\psi}^{n+1}+(G^{\prime}(\psi(t^{n}))-G^{\prime}(\psi^{n}))+\partial_{\mathbf{n}}e_{\phi}^{n+1}+s_{2}(e_{\psi}^{n+1}-e_{\psi}^{n})+R_{\Gamma}^{n+1},R_{\Psi}^{n+1})_\Gamma\notag\\
            &-\frac{\tau}{M_1}(R_{\phi}^{n+1},\Delta^{-1}R_{\Phi}^{n+1})_\Omega-\frac{\tau}{M_2}(R_{\psi}^{n+1},\Delta_\Gamma ^{-1}R_{\Psi}^{n+1})_\Gamma\notag \\ 
            =&-\tau(\nabla e_{\phi}^{n+1},\nabla R_\Phi^{n+1})_\Omega-{\tau}(H^{n},R_\Phi^{n+1})_\Omega-s_{1}\tau(e_{\phi}^{n+1}-e_{\phi}^{n},R_\Phi^{n+1})_\Omega-\tau(R_{\mu}^{n+1},R_{\Phi}^{n+1})_\Omega\notag\\
            &-\tau(\nabla_\Gamma e_{\psi}^{n+1},\nabla_\Gamma R_{\Psi}^{n+1})_\Gamma-{\tau}(\tilde{H}^n,R_{\Psi}^{n+1})_\Gamma-s_{2}\tau(e_{\psi}^{n+1}-e_{\psi}^{n},R_{\Psi}^{n+1})_\Gamma-\tau(R_{\Gamma}^{n+1},R_{\Psi}^{n+1})_\Gamma\notag\\
            &-\frac{\tau}{M_1}(R_{\phi}^{n+1},\Delta^{-1}R_{\Phi}^{n+1})_\Omega-\frac{\tau}{M_2}(R_{\psi}^{n+1},\Delta_\Gamma ^{-1}R_{\Psi}^{n+1})_\Gamma\notag\\
            \le&\tau\|\nabla e_{\phi}^{n+1}\|\|\nabla R_\Phi^{n+1}\|+s_{1}\tau\|e_{\phi}^{n+1}-e_{\phi}^{n}\|\|R_\Phi^{n+1}\|+\tau\|R_{\mu}^{n+1}\|\|R_{\Phi}^{n+1}\|\notag\\
            &+\tau\|\nabla_\Gamma e_{\psi}^{n+1}\|_\Gamma\|\nabla_\Gamma R_{\Psi}^{n+1}\|_\Gamma+s_{2}\tau\|e_{\psi}^{n+1}-e_{\psi}^{n}\|_\Gamma\|R_{\Psi}^{n+1}\|_\Gamma+\tau\|R_{\Gamma}^{n+1}\|_\Gamma\|R_{\Psi}^{n+1}\|_\Gamma\notag\\
            &+\frac{\tau}{M_1}\|R_{\phi}^{n+1}\|\|\Delta^{-1}R_{\Phi}^{n+1}\|+\frac{\tau}{M_2}\|R_{\psi}^{n+1}\|_\Gamma\|\Delta_\Gamma ^{-1}R_{\Psi}^{n+1}\|_\Gamma\notag\\
            &+{\tau}||H^{n}||||R_\Phi^{n+1}||+{\tau}||\tilde{H}^n||_{\Gamma}||R_{\Psi}^{n+1}||_\Gamma\notag\\
            \leq&\frac{\tau}{2}\|\nabla e_{\phi}^{n+1}\|^2+\frac{\tau}{2}\|\nabla R_\Phi^{n+1}\|^2+\frac{s_{1}\tau}{2}\|e_{\phi}^{n+1}-e_{\phi}^{n}\|^2+\frac{s_{1}\tau}{2}\|R_\Phi^{n+1}\|^2\notag\\
            &+\frac{\tau}{2}\|\nabla_\Gamma e_{\psi}^{n+1}\|^2_\Gamma+\frac{\tau}{2}\|\nabla_\Gamma R_{\Psi}^{n+1}\|^2_\Gamma+\frac{s_{2}\tau}{2}\|e_{\psi}^{n+1}-e_{\psi}^{n}\|^2_\Gamma+\frac{s_{2}\tau}{2}\|R_{\Psi}^{n+1}\|^2_\Gamma\notag\\
            &+\frac{\tau}{2}\|R_{\mu}^{n+1}\|^2+\frac{\tau}{2}\|R_{\Phi}^{n+1}\|^2+\frac{\tau}{2}\|R_{\Gamma}^{n+1}\|^2_\Gamma+\frac{\tau}{2}\|R_{\Psi}^{n+1}\|_\Gamma^2\notag\\
            &+\frac{\tau}{2M_1}\|R_{\phi}^{n+1}\|^2+\frac{\tau}{2M_1}\|\Delta^{-1}R_{\Phi}^{n+1}\|^2+\frac{\tau}{2M_2}\|R_{\psi}^{n+1}\|^2_\Gamma+\frac{\tau}{2M_2}\|\Delta_\Gamma ^{-1}R_{\Psi}^{n+1}\|^2_\Gamma\notag\\
            &+\frac{\tau}{2}||H^{n}||^2+\frac{\tau}{2}||R_\Phi^{n+1}||^2+\frac{\tau}{2}||\tilde{H}^n||_{\Gamma}^2+\frac{\tau}{2}||R_{\Psi}^{n+1}||_\Gamma^2\notag\\
            \leq&C_1 \tau^3 + \frac{\tau}{2}\|\nabla e_{\phi}^{n+1}\|^2+\frac{\tau}{2}||e_{\phi}^{n}||^2+\frac{s_{1}\tau}{2}\|e_{\phi}^{n+1}-e_{\phi}^{n}\|^2\notag\\
            &+\frac{\tau}{2}\|\nabla_\Gamma e_{\psi}^{n+1}\|^2_\Gamma+\frac{\tau}{2}||e_{\psi}^{n}||_{\Gamma}^2+\frac{s_{2}\tau}{2}\|e_{\psi}^{n+1}-e_{\psi}^{n}\|^2_\Gamma,
        \label{4.42}
    \end{align}
    where $C_1$ is a constant independent of $\tau$ and we  use the estimates for the truncation terms $R^{n+1}_\phi,R^{n+1}_\psi,R^{n+1}_\mu,R^{n+1}_\Gamma,R^{n+1}_\Phi,R^{n+1}_\Psi$.

For the term $A_2$, we have
\begin{align}
            &-(R_{\mu}^{n+1}+\frac{\Delta^{-1}R_{\phi}^{n+1}}{M_1},{e^{n+1}_\phi-e^{n}_\phi})_\Omega-(R_{\Gamma}^{n+1}+\frac{\Delta_\Gamma ^{-1}R_{\Psi}^{n+1}}{M_2},{e^{n+1}_\psi-e^{n}_\psi})_\Gamma\notag\\
            =&-\tau(R_{\mu}^{n+1}+\frac{\Delta^{-1}R_{\phi}^{n+1}}{M_1},{e^{n+1}_\Phi-R^{n+1}_\Phi})_\Omega-\tau(R_{\Gamma}^{n+1}+\frac{\Delta_\Gamma ^{-1}R_{\Psi}^{n+1}}{M_2},{e^{n+1}_\Psi-R^{n+1}_\Psi})_\Gamma\notag\\
            =&-\tau(R_{\mu}^{n+1}+\frac{\Delta^{-1}R_{\phi}^{n+1}}{M_1},{\Delta u^{n+1}-R^{n+1}_\Phi})_\Omega-\tau(R_{\Gamma}^{n+1}+\frac{\Delta_\Gamma ^{-1}R_{\Psi}^{n+1}}{M_2},{\Delta _{\Gamma}v^{n+1}-R^{n+1}_\Psi})_\Gamma\notag\\
            =&\tau(\nabla R_{\mu}^{n+1}+\frac{1}{M_1}\nabla {\Delta^{-1}R_{\phi}^{n+1}},\nabla u^{n+1})_\Omega+\tau(R_{\mu}^{n+1}+\frac{\Delta^{-1}R_{\phi}^{n+1}}{M_1},R^{n+1}_\Phi)_\Omega\notag\\
            &+\tau(\nabla_{\Gamma}R_{\Gamma}^{n+1}+\frac{1}{M_2}\nabla_{\Gamma}{\Delta_\Gamma ^{-1}R_{\Psi}^{n+1}},\nabla_\Gamma v^{n+1})_\Gamma+\tau(R_{\Gamma}^{n+1}+\frac{\Delta_\Gamma ^{-1}R_{\Psi}^{n+1}}{M_2},R^{n+1}_\Psi)_\Gamma\notag\\
            \le &{2\tau M_1}\|\nabla R_{\mu}^{n+1}\|^2+\frac{2\tau}{M_1}\|\nabla \Delta^{-1}R_{\phi}^{n+1}\|_\Omega^2+\frac{\tau}{4M_1}\|\nabla u^{n+1}\|^2\notag\\
            &+{2\tau M_1}\|R_{\mu}^{n+1}\|^2+\frac{2\tau}{M_1}\|\Delta^{-1}R_{\phi}^{n+1}\|^2+\frac{\tau}{4M_1}\| R_\Phi^{n+1}\|^2\notag\\
            &+{2\tau M_2}\|\nabla_\Gamma R_{\Gamma}^{n+1}\|^2_\Gamma+\frac{2\tau}{M_2}\|\nabla_\Gamma \Delta_\Gamma^{-1}R_{\psi}^{n+1}\|_\Gamma^2+\frac{\tau}{4M_2}\|\nabla_\Gamma v^{n+1}\|_\Gamma^2\notag\\
            &+{2\tau M_2}\|R_{\Gamma}^{n+1}\|^2_\Gamma+\frac{2\tau}{M_2}\|\Delta_\Gamma^{-1}R_{\psi}^{n+1}\|_\Gamma^2+\frac{\tau}{4M_2}\| R_\Psi^{n+1}\|_\Gamma^2\notag\\
            \le&C_2 \tau^3 +\frac{\tau}{4M_1}\|\nabla u^{n+1}\|^2+\frac{\tau}{4M_2}\|\nabla_\Gamma v^{n+1}\|_\Gamma^2,\label{4.43}
        \end{align}

    where $C_2$ is a constant independent of $\tau$. Here, we use the estimates for the truncation terms $R^{n+1}_\phi,R^{n+1}_\psi,R^{n+1}_\mu,R^{n+1}_\Gamma,R^{n+1}_\Phi,R^{n+1}_\Psi$.

    For the term $(A_3$, using the estimates for $H^n,\nabla H^n$, and $R_{\Phi}^n$, we have
        \begin{align}
            &-((F^{\prime}(\phi(t^{n}))-F^{\prime}(\phi^{n})),{e^{n+1}_\phi-e^{n}_\phi})_\Omega\notag\\
            =&-{\tau}(H^n,e^{n+1}_\Phi)_\Omega+{\tau}(H^n,{R^{n+1}_\Phi})_\Omega\notag\\
            =&-{\tau}(H^n,\Delta u^{n+1})_\Omega+{\tau}(H^n,{R^{n+1}_\Phi})_\Omega\notag\\
            =&{\tau}(\nabla H^n,\nabla u^{n+1})_\Omega+{\tau}(H^n,{R^{n+1}_\Phi})_\Omega\notag\\
            \le& {\tau}\|\nabla H^n\|\|\nabla u^{n+1}\|+{\tau}||H^n||||R^{n+1}_\Phi||\notag\\
            \leq& C_3\tau(||e_{\phi}^n||+||\nabla e_{\phi}^n||)\|\nabla u^{n+1}\|+C_4\tau ||e_{\phi}^n||||R^{n+1}_\Phi|| \notag\\
            \leq& {2C_3^2\tau M_1}\|\nabla e^n_\phi\|^2 + \frac{\tau}{4M_1}\|\nabla u^{n+1}\|^2+C_5\tau||e_{\phi}^n||^2+C_6\tau^3,\label{4.44}
        \end{align}

   where $C_i(i=3,4,5,6)$ are constants independent of $\tau$ and $C_5=2C_3^2+C_4/2$.
    Similarly, we can obtain
   \begin{align}
            &-((G^{\prime}(\psi(t^{n}))-G^{\prime}(\psi^{n})),{e^{n+1}_\psi-e^{n}_\psi})_\Gamma\notag\\
            =&-{\tau}(\tilde{H}^n,e^{n+1}_\Psi)_\Gamma+{\tau}(\tilde{H}^n,{R^{n+1}_\Psi})_\Gamma\notag\\
            =&-{\tau}(\tilde{H}^n,\Delta_\Gamma v^{n+1})_\Gamma+{\tau}(\tilde{H}^n,{R^{n+1}_\Psi})_\Gamma\notag\\
            =&{\tau}(\nabla_\Gamma\tilde{H}^n,\nabla_\Gamma v^{n+1})_\Gamma+{\tau}(\tilde{H}^n,{R^{n+1}_\Psi})_\Gamma\notag\\
            \le& {\tau}\|\nabla_\Gamma \tilde{H}^n\|_\Gamma\|\nabla_\Gamma v^{n+1}\|_\Gamma+{\tau}||\tilde{H}^n||_{\Gamma}||R^{n+1}_\Psi||_\Gamma\notag\\
            \leq&C_7\tau(||e_{\psi}^n||_{\Gamma}+||\nabla_{\Gamma}e_{\psi}^n||_{\Gamma})||\nabla_{\Gamma}v^{n+1}||_{\Gamma}+C_8\tau||e_{\psi}^n||_{\Gamma}||R_{\Psi}^{n+1}||_{\Gamma}
           \notag \\
            \leq& 2C_7^2\tau M_2\|\nabla_\Gamma e^n_\psi\|_\Gamma^2 + \frac{\tau}{4M_2}\|\nabla_\Gamma v^{n+1}\|^2_\Gamma+C_9\tau||e_{\psi}^n||_{\Gamma}^2+C_{10}\tau^3,\label{4.45}
        \end{align}

    where $C_i(i=7,8,9,10)$ are constants independent of $\tau$ and $C_9=2C_7^2+C_8/2$. Here we use the estimates for $\tilde{H}^n, \nabla_\Gamma \tilde{H}^n$ and $R_{\Psi}^{n+1}$.
    
 For the term $A_4$, we have 
     \begin{align}
   &-\tau(\nabla u^{n+1},\nabla e_{\phi}^{n+1})_{\Omega}-\tau(\nabla_{\Gamma} v^{n+1},\nabla_{\Gamma}e_{\psi}^{n+1})_{\Gamma}\notag\\
   \leq&\tau||\nabla u^{n+1}||||\nabla e_{\phi}^{n+1}||+\tau||\nabla_{\Gamma} v^{n+1}||_{\Gamma}||\nabla_{\Gamma}e_{\psi}^{n+1}||_{\Gamma}\notag\\
   \leq&M_1\tau||\nabla e_{\phi}^{n+1}||^2+\frac{\tau}{4M_1}||\nabla u^{n+1}||^2+M_2\tau||\nabla_{\Gamma}e_{\psi}^{n+1}||_{\Gamma}^2+\frac{\tau}{4M_2}||\nabla_{\Gamma} v^{n+1}||_{\Gamma}^2.
     \label{A4s}\end{align}
 
 We estimate the term $A_5$ as follows

     \begin{align}
 &-\tau(R_{\Phi}^{n+1},e_{\phi}^{n+1})_{\Omega}-\tau(R_{\Psi}^{n+1},e_{\psi}^{n+1})_{\Gamma} \notag\\
 \leq&\tau||R_{\Phi}^{n+1}||||e_{\phi}^{n+1}||+\tau||R_{\Psi}^{n+1}||_{\Gamma}||e_{\psi}^{n+1}||_{\Gamma}
 \leq C_{11}\tau^3+\frac{\tau}{2}||e_{\phi}^{n+1}||^2+    \frac{\tau}{2}||e_{\psi}^{n+1}||_{\Gamma}^2,\label{A5s}\end{align}
 where $C_{11}$ is a constant independent of $\tau$.
 
    By combining \eqref{4.37}, \eqref{4.42}, \eqref{4.43}, \eqref{4.44}, \eqref{4.45}, \eqref{A4s} and \eqref{A5s}, we simplify to obtain:
    
        \begin{align}
            &\frac{\beta_1}{2M_1}(\|\nabla u^{n+1}\|^2-\|\nabla u^{n}\|^2+\|\nabla u^{n+1}-\nabla u^{n}\|^2) +\frac{\tau}{4M_1}\|\nabla u^{n+1}\|^2\notag\\
    &+\frac12(\|\nabla e^{n+1}_\phi\|^2-\|\nabla e_\phi^{n}\|^2+\|\nabla e_\phi^{n+1}-\nabla e_\phi^{n}\|)+{s_1}\|e_\phi^{n+1}-e_\phi^n\|^2\notag\\
            &+\frac{1}{2} (||e_{\phi}^{n+1}||^2-||e_{\phi}^{n}||^2+
    ||e_{\phi}^{n+1}-e_{\phi}^{n}||^2) \notag\\&\frac{\beta_2}{2M_2}(\|\nabla_\Gamma v^{n+1}\|_\Gamma^2-\|\nabla_\Gamma v^{n}\|_\Gamma^2+\|\nabla_\Gamma v^{n+1}-\nabla_\Gamma v^{n}\|_\Gamma^2)+\frac{\tau}{4M_2}\|\nabla_\Gamma v^{n+1}\|_\Gamma^2\notag\\
            &+\frac12(\|\nabla_\Gamma e^{n+1}_\psi\|_\Omega^2-\|\nabla_\Gamma e_\psi^{n}\|_\Gamma^2+\|\nabla_\Gamma e_\psi^{n+1}-\nabla_\Gamma e_\psi^{n}\|_\Gamma^2)+{s_2}\|e_\psi^{n+1}-e_\psi^n\|^2_\Gamma\notag\\
     &+\frac{1}{2} (||e_{\psi}^{n+1}||_{\Gamma}^2-||e_{\psi}^{n}||_{\Gamma}^2+
    ||e_{\psi}^{n+1}-e_{\psi}^{n}||_{\Gamma}^2)\notag\\       
            \lesssim &\tau^3 + \tau (\|\nabla e_{\phi}^{n+1}\|^2+\|\nabla e_{\phi}^{n}\|^2+||e_{\phi}^{n+1}||^2+||e_{\phi}^{n}||^2+s_1\|e_{\phi}^{n+1}-e_{\phi}^{n}\|^2)\notag\\
        &+\tau(\|\nabla_\Gamma e_{\psi}^{n+1}\|^2_\Gamma+\|\nabla_\Gamma e_{\psi}^{n}\|^2_\Gamma+\| e_{\psi}^{n+1}\|^2_\Gamma+\| e_{\psi}^{n}\|^2_\Gamma+s_2\|e_{\psi}^{n+1}-e_{\psi}^{n}\|^2_\Gamma).\label{4.46}
        \end{align}

    Summing \eqref{4.46} together for \(n = 0 \) to \(m (m \le M)\), we have
  
        \begin{align}
            &\frac{1}{2}(\|\nabla e^{m+1}_\phi\|^2+\| e^{m+1}_\phi\|^2+\|\nabla_\Gamma e^{m+1}_\psi\|_\Gamma^2+\| e^{m+1}_\psi\|_\Gamma^2)+\frac{\beta_1}{2M_1}\|\nabla u^{m+1}\|^2+\frac{\beta_2}{2M_2}\|\nabla_\Gamma v^{m+1}\|_\Gamma^2\notag\\
            &+\sum_{n=0}^{m}\bigg(\frac{\beta_1}{2M_1}\|\nabla (u^{n+1}- u^{n})\|^2 +\frac{\tau}{4M_1}\|\nabla u^{n+1}\|^2+\frac12\|\nabla (e_\phi^{n+1}- e_\phi^{n})\|^2+\notag\\
            &+{(s_1+\frac12)}\|e_\phi^{n+1}-e_\phi^n\|^2+{(s_2+\frac12)}\|e_\psi^{n+1}-e_\psi^n\|^2_\Gamma\notag\\
            &+\frac{\beta_2}{2M_2}\|\nabla_\Gamma (v^{n+1}- v^{n})\|_\Gamma^2+\frac{\tau}{4M_2}\|\nabla_\Gamma v^{n+1}\|_\Gamma^2+\frac12\|\nabla_\Gamma (e_\psi^{n+1}- e_\psi^{n})\|_\Gamma^2\bigg)\notag\\
            \le& \tilde{C}(m+1)\tau^3 +\tilde{C}\tau\sum_{n=0}^{m}\bigg(\|\nabla e_{\phi}^{n+1}\|^2+\| e_{\phi}^{n+1}\|^2+\|e_{\phi}^{n+1}-e_{\phi}^{n}\|^2\notag\\
            &+\|\nabla_\Gamma e_{\psi}^{n+1}\|^2_\Gamma+\| e_{\psi}^{n+1}\|^2_\Gamma+\|e_{\psi}^{n+1}-e_{\psi}^{n}\|^2_\Gamma\bigg),\label{4.47}
        \end{align}
        
Where $\tilde{C}$ is a constant independent of $\tau$ and $||e_{\phi}^0||=||e_{\psi}^0||_{\Gamma}=||\nabla e_{\phi}^0||=||\nabla e_{\psi}^0||_{\Gamma}=||\nabla u^0||=||\nabla v^0||_{\Gamma}=0$.

Define that 
\begin{align}
    I_m=&\frac{1}{2}\big(\|\nabla e^{m+1}_\phi\|^2+\| e^{m+1}_\phi\|^2+\|\nabla_\Gamma e^{m+1}_\psi\|_\Gamma^2+\| e^{m+1}_\psi\|_\Gamma^2\big)\nonumber\\
&+{(s_1+\frac12)}\|e_\phi^{m+1}-e_\phi^m\|^2+{(s_2+\frac12)}\|e_\psi^{m+1}-e_\psi^m\|^2_\Gamma,
\end{align}
 and
 \begin{align}
  S_m= \sum_{n=0}^m \bigg( \frac12\|\nabla (e_\phi^{n+1}- e_\phi^{n})\|^2+\frac12\|\nabla_\Gamma (e_\psi^{n+1}- e_\psi^{n})\|_\Gamma^2\bigg).
 \end{align}
Then dropping the positive terms from the left side of \eqref{4.47}, we have
\begin{align}
    I_m+S_m\lesssim \tau^2+\tau\sum_{n=0}^mI_n.
\end{align}
    According to the discrete Gr\"{o}nwall's inequality, there exist constants $\tilde{c}_0$ and $C_0$, such that
    \begin{align}
I_m+S_m\leq\tilde{c}_0\tau^2,
    \end{align}
    where $\tilde{c}_0$ is independent of  $\tau$ and  $\tau \le C_0$.
Therefore  we obtain the error estimates for $e^{m+1}_\phi$ and $e^{m+1}_\psi$.
\end{proof} 

\section{Numerical experiments}
\label{sec5}
In this section, we will verify energy stability and the temporal accuracy of the scheme  \eqref{3.1}-\eqref{3.8} by testing some  numerical experiments.
 Here we use the second-order central finite difference method to discretize the space.
  For simplicity, if not explicit specified, the surface potential $G(\psi)=(\psi^2-1)^2/(4\delta^2)$ is chosen, the 2D square domain $\Omega\cup\Gamma=[0,1]\times[0,1]$ and $\Omega=(0,1)\times(0,1)$  are selected, and
the experimental parameters are set by default as,
\begin{align}
    \label{equ:parameters}
    \begin{aligned}
        &M_1=M_2=0.001,\; \tau=10^{-4}, \;h=1/100,\\
        &\beta_1=\beta_2=\beta=0,\; \varepsilon=\delta=2h,\; s_1=\frac{2}{\varepsilon^2},\;
         s_2=\frac{2}{\delta^2}.
    \end{aligned}
\end{align}
\subsection{Temporal accuracy test}

Firstly, we performed a convergence test of the numerical scheme to verify the  error analysis. The spatial step size is set to $h = 1/50= 0.02$ , and the time step $\tau$ is chosen as $0.01, 0.005, 0.0025, 0.00125$, $ 0.000625,$ and  $0.0003125$. 
The initial condition is specified as zero in the interior domain $\Omega = (0,1) \times (0,1)$ and one on the boundary $\Gamma = \partial\Omega$.  The complete spatial domain $\Omega\cup\Gamma$, is the closed unit square $[0,1] \times [0,1]$.

Then we select $\tau = 1 \times 10^{-6}$ as the reference solution, and the error is carried out between the reference solution and the numerical solution with
different time step sizes at $T=1$.
 The figure \ref{5.1}  indicates that the convergence rate of the numerical scheme is asymptotic of first order in time, which is consistent with the error analysis in Section \ref{sec4}.

\begin{figure}[h]
    \centering
    \includegraphics[width=0.7\textwidth]{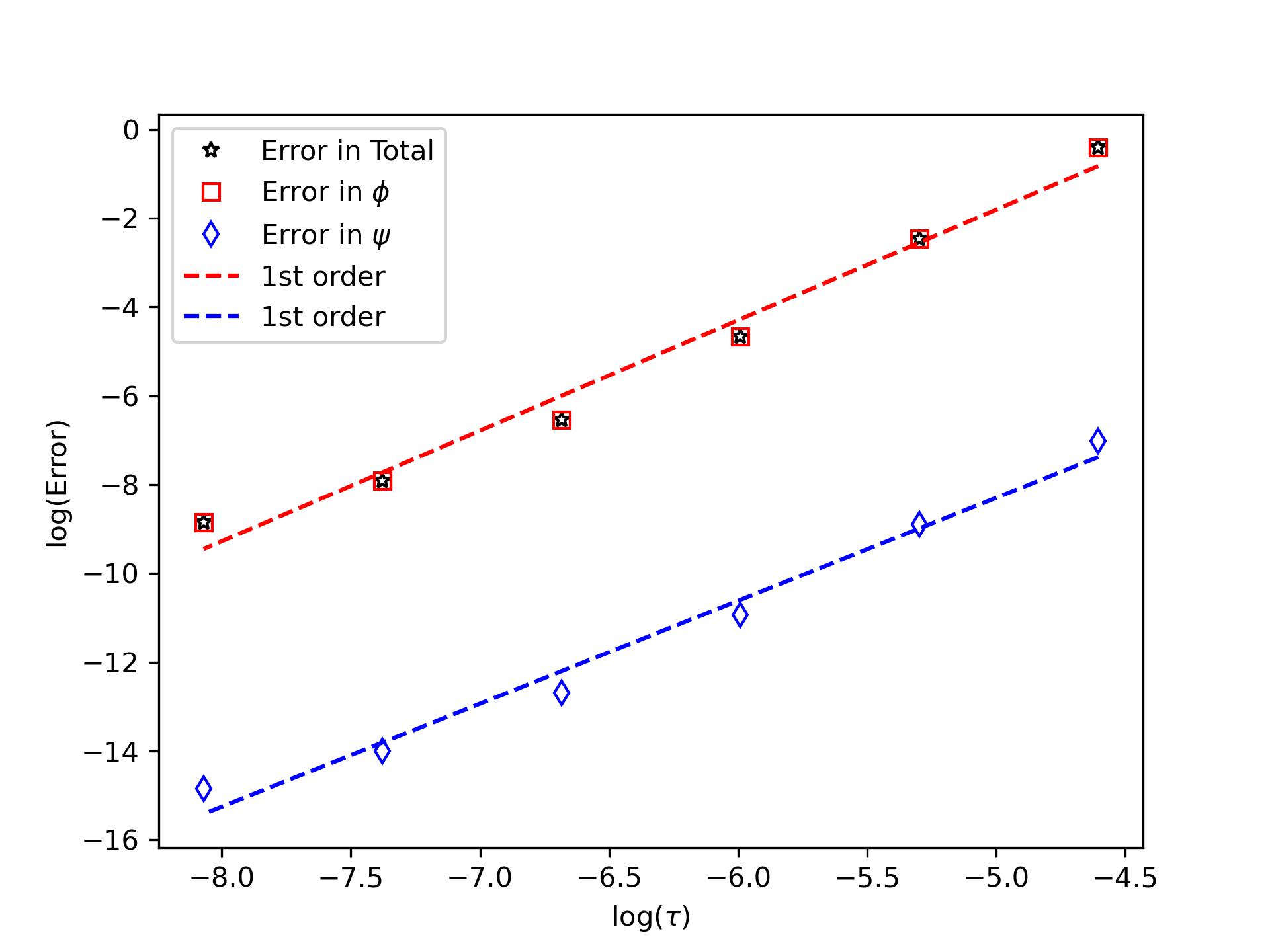} 
    \caption{The $L^2$ numerical errors for $\phi$ and $\psi$ at $T = 1.0$.} 
    \label{5.1} 
\end{figure}

\subsection{The effect of the hyperbolic term}

In this section, we consider the effect of the hyperbolic term on the system by changing the value of the parameter $\beta$. 

\textbf{Case 1:} For the initial value, we set in Figure \ref{5.2},
\begin{align}
 \phi_0(x,y)=
    \begin{cases}
        0, \quad & (x,y) \text{ in} \; \Omega,\\
        1, \quad & (x,y)\text{ on} \; \Gamma.
    \end{cases}
    \label{equ:initial data of 8.1}
\end{align}

 In this case, the parameter $\beta$  is set to the values $1,$ $ 0.1,$ and $0$.
 Then We  obtain the time evolution of the numerical solutions for 
$\phi$, as shown in Figure \ref{5.4}, along with the corresponding energy and mass evolutions depicted in Figure \ref{5.3}.
We observe that the system reaches the steady state more slowly when the value of 
$\beta$ is larger. Meanwhile we find that the discrete energy is decreasing fast by reducing the value of $\beta$. We also see that the mass conservation in the bulk and on the boundary is holding during the computation.

\begin{figure}[!htbp]
    \centering
    \includegraphics[width=0.3\textwidth]{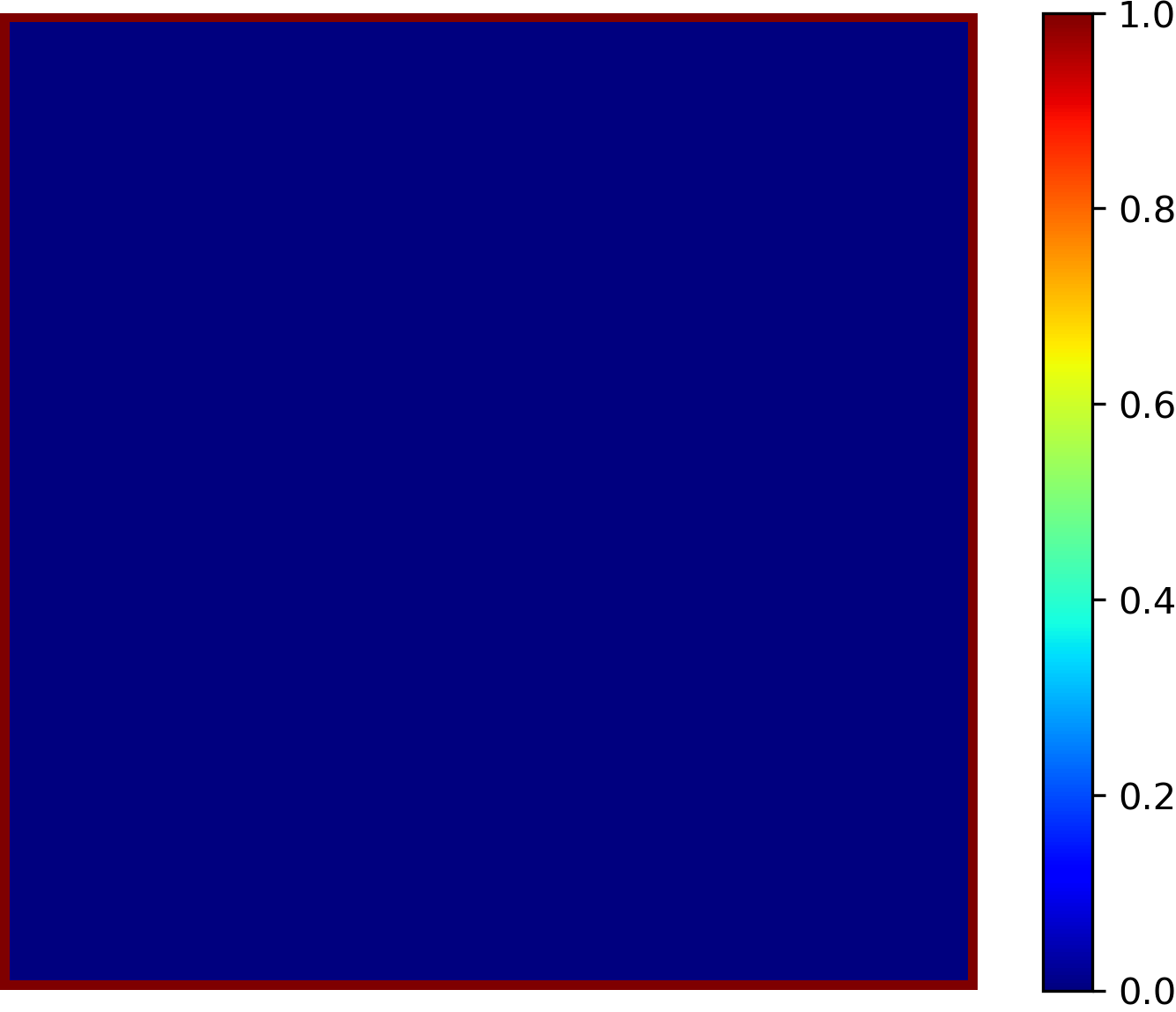}
    \caption{The initial data of Case 1.}

    \label{5.2}

\end{figure}

\begin{figure}[!htbp]
    \centering
    \scalebox{0.8}{
        \begin{minipage}{\textwidth}
            \begin{subfigure}[b]{0.49\textwidth}
                \includegraphics[width=\textwidth]{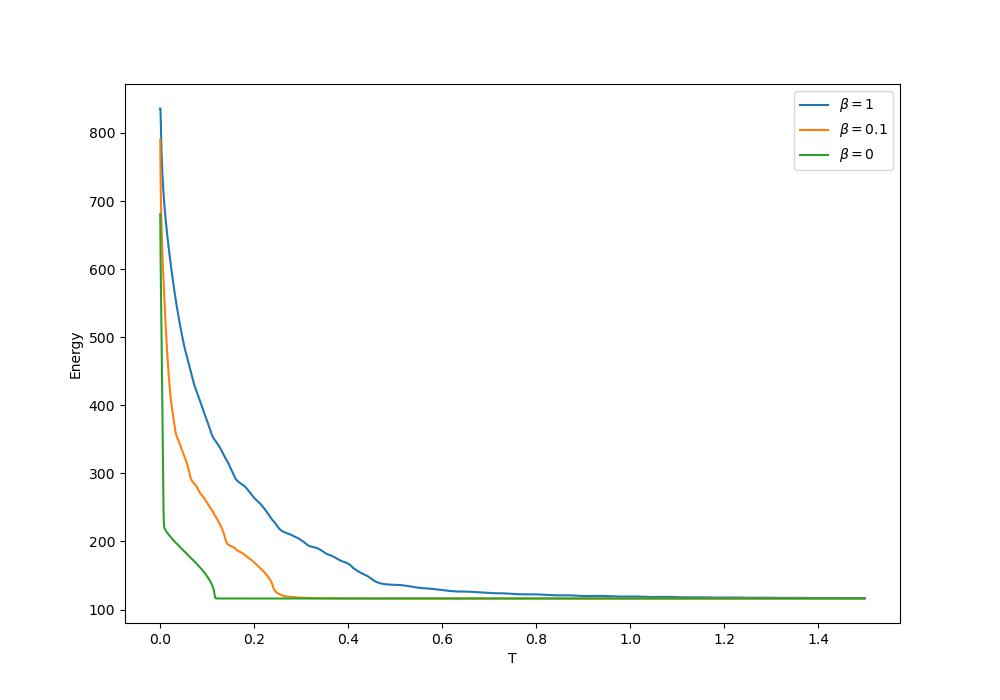}
                \caption{Energy curves with different $\beta$.}
            \end{subfigure}
            \begin{subfigure}[b]{0.49\textwidth}
                \includegraphics[width=\textwidth]{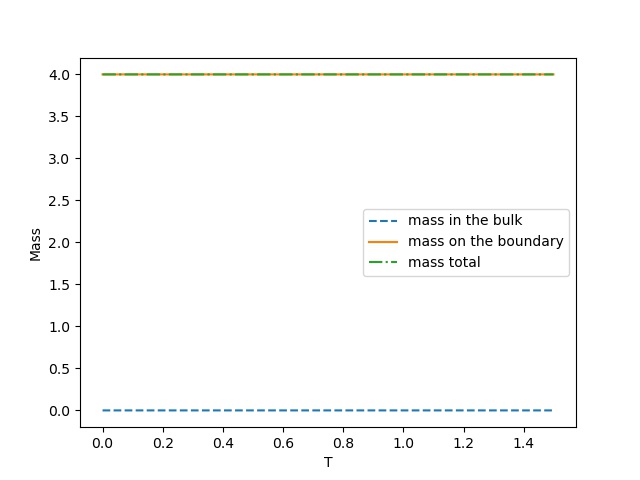}
                \caption{The mass with $\beta = 1$.}
            \end{subfigure}
            \vspace{0.5em}
            \begin{subfigure}[b]{0.49\textwidth}
                \includegraphics[width=\textwidth]{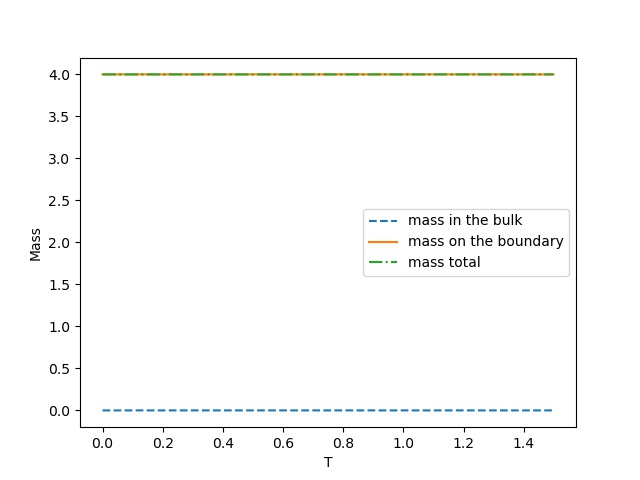}
                \caption{The mass with $\beta = 0.1$.}
            \end{subfigure}
            \begin{subfigure}[b]{0.49\textwidth}
                \includegraphics[width=\textwidth]{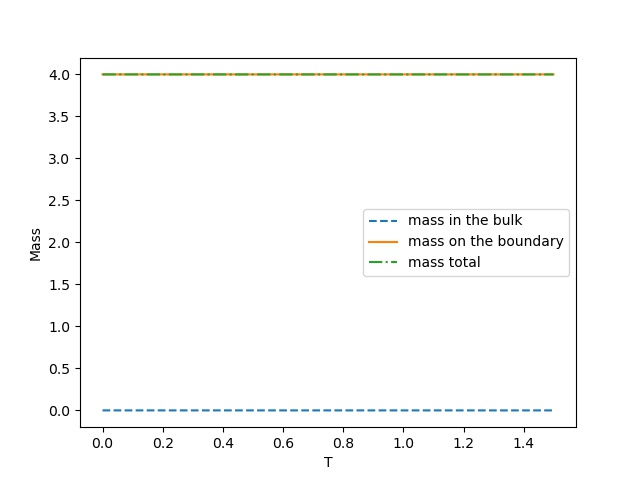}
                \caption{The mass with $\beta = 0$.}
            \end{subfigure}
        \end{minipage}
    }
    \caption{The energy evolution and the mass evolutions of Case 1.}

    \label{5.3}

\end{figure}

\begin{figure}[!htbp]
    \centering
    \begin{subfigure}[t]{0.3\textwidth}      \includegraphics[scale=0.4]{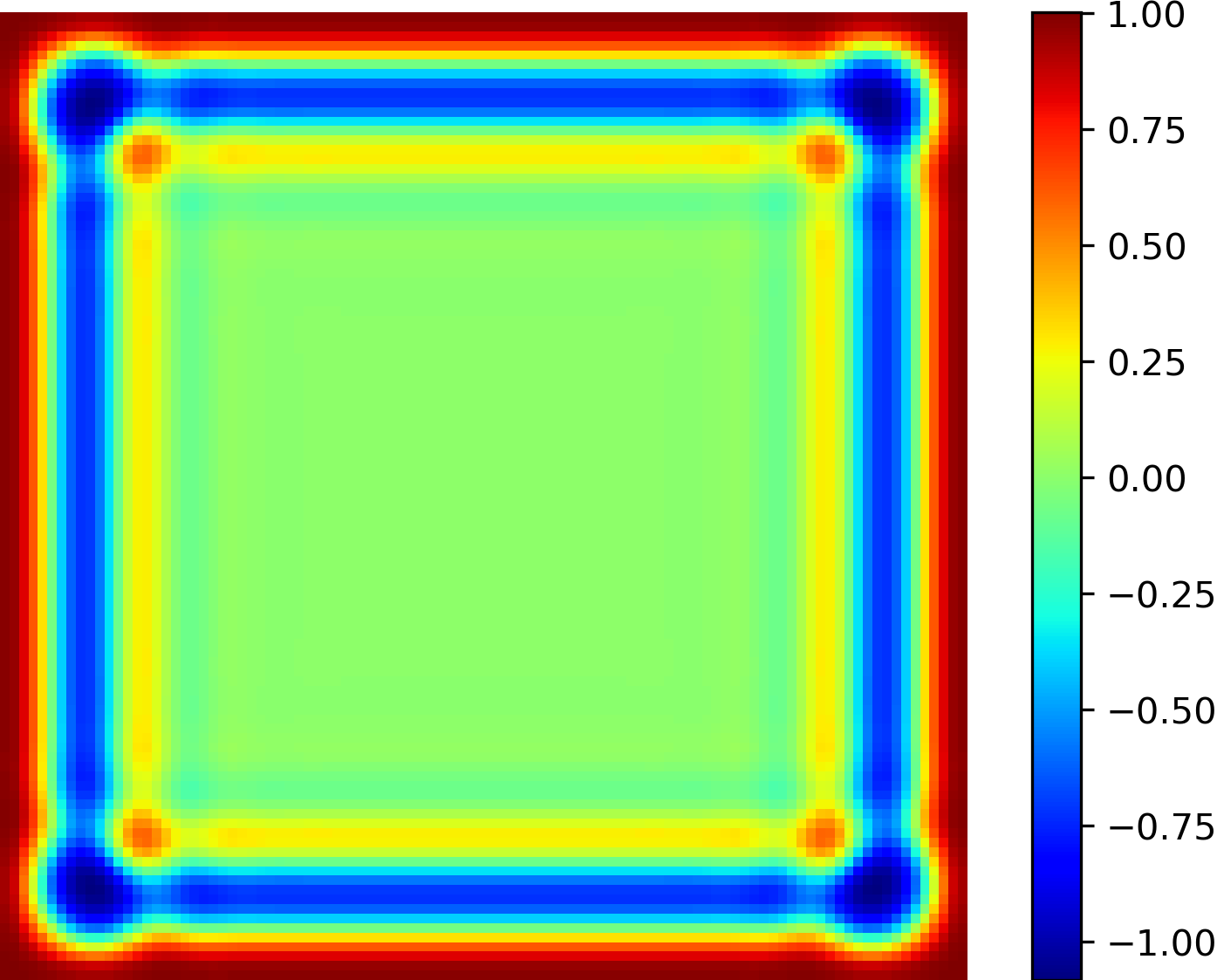}
    \end{subfigure}
    \hfill
    \begin{subfigure}[t]{0.3\textwidth}  \includegraphics[scale=0.4]{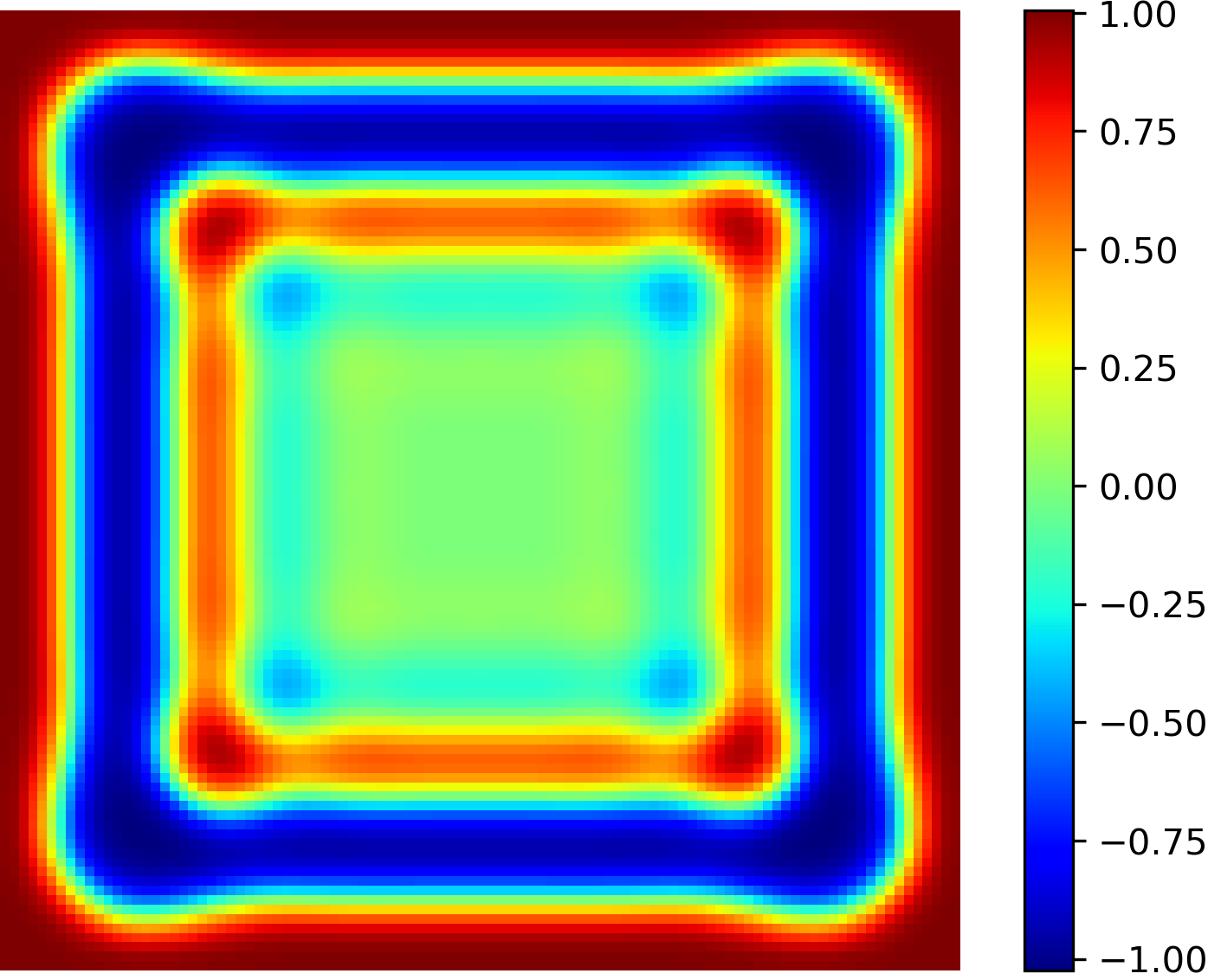}
    \end{subfigure}
    \hfill
    \begin{subfigure}[t]{0.3\textwidth}
\includegraphics[scale=0.4]{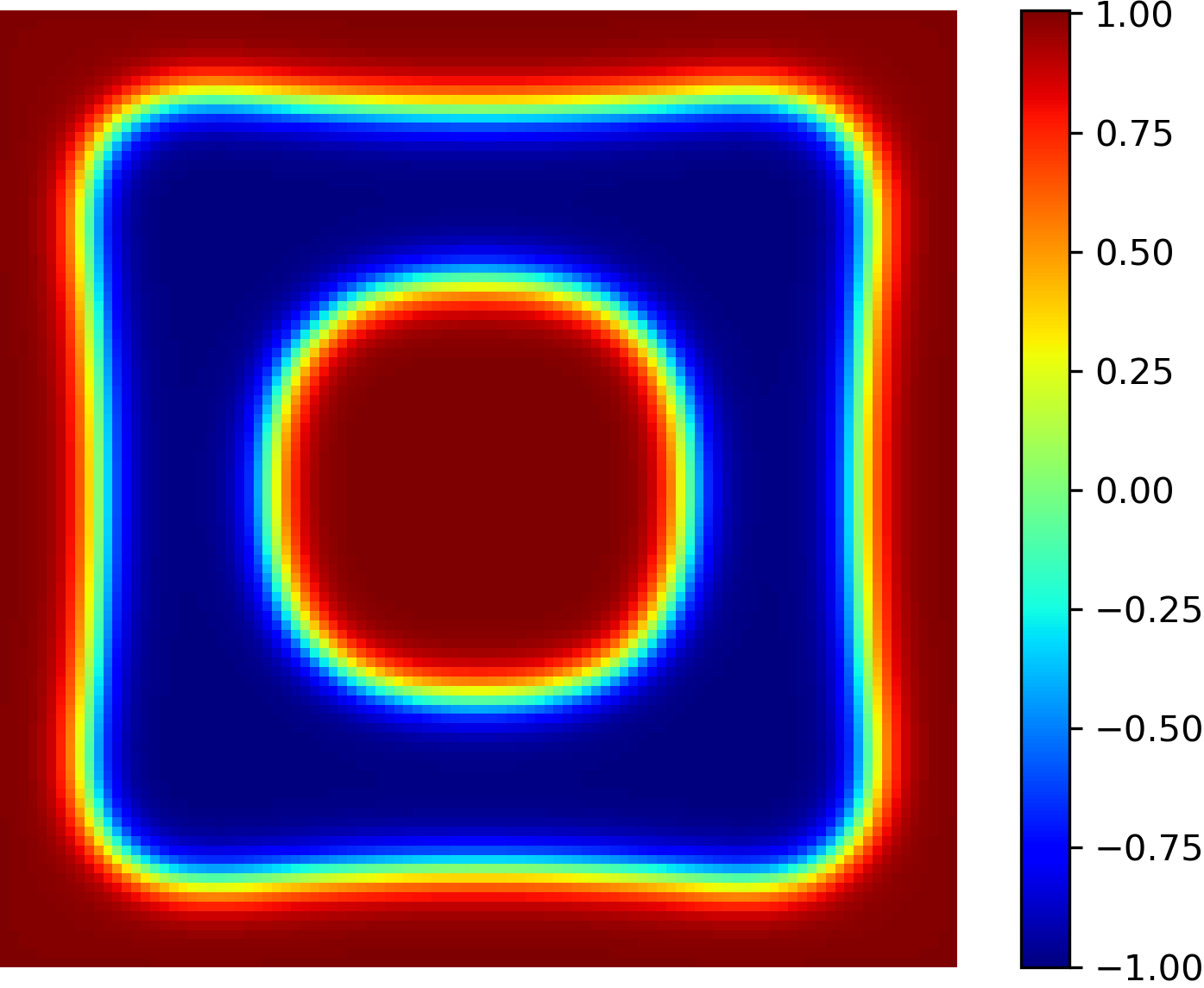}
    \end{subfigure}
\begin{subfigure}[t]{0.3\textwidth}
\includegraphics[scale=0.4]{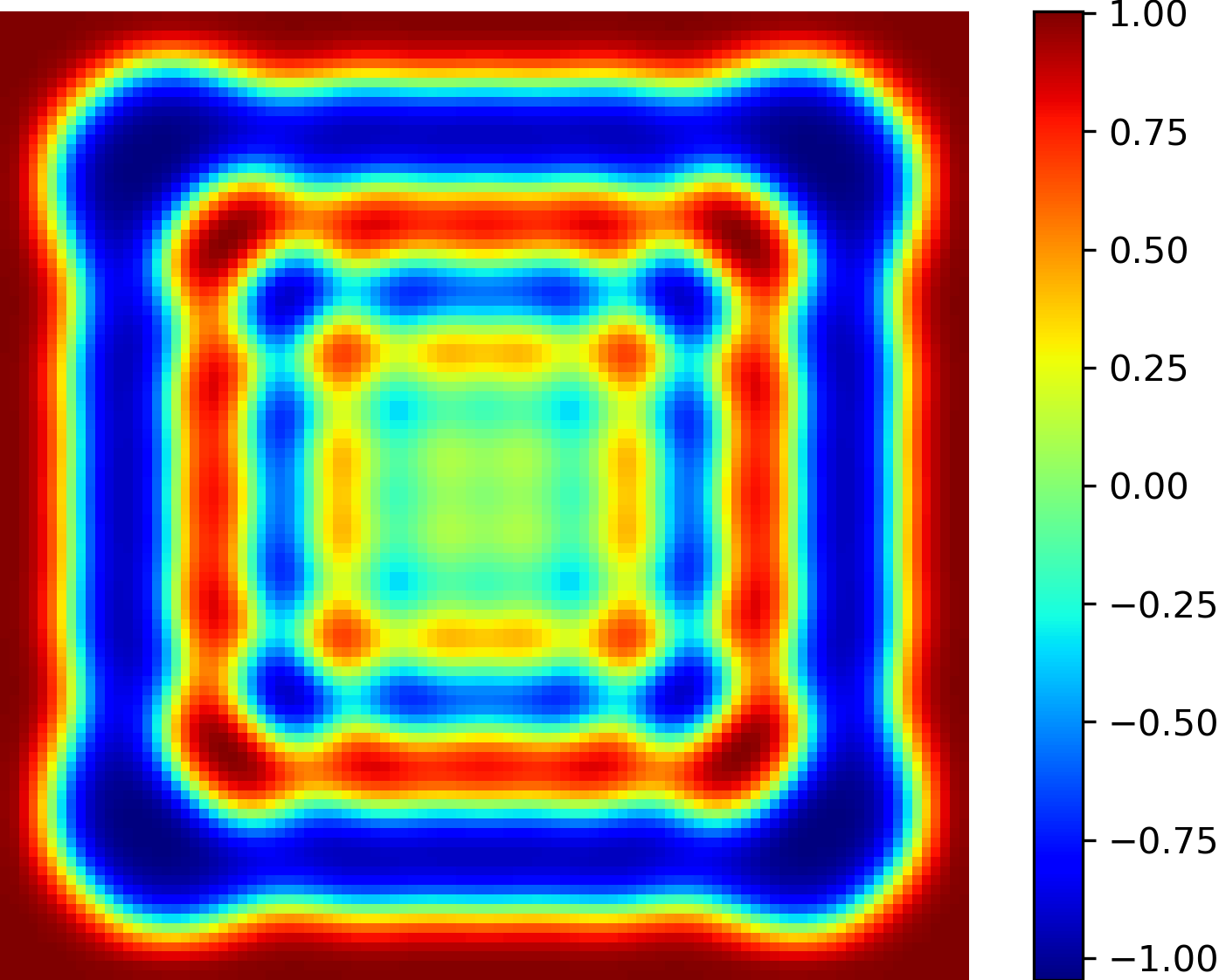}
    \end{subfigure}
    \hfill
    \begin{subfigure}[t]{0.3\textwidth}
    \includegraphics[scale=0.4]{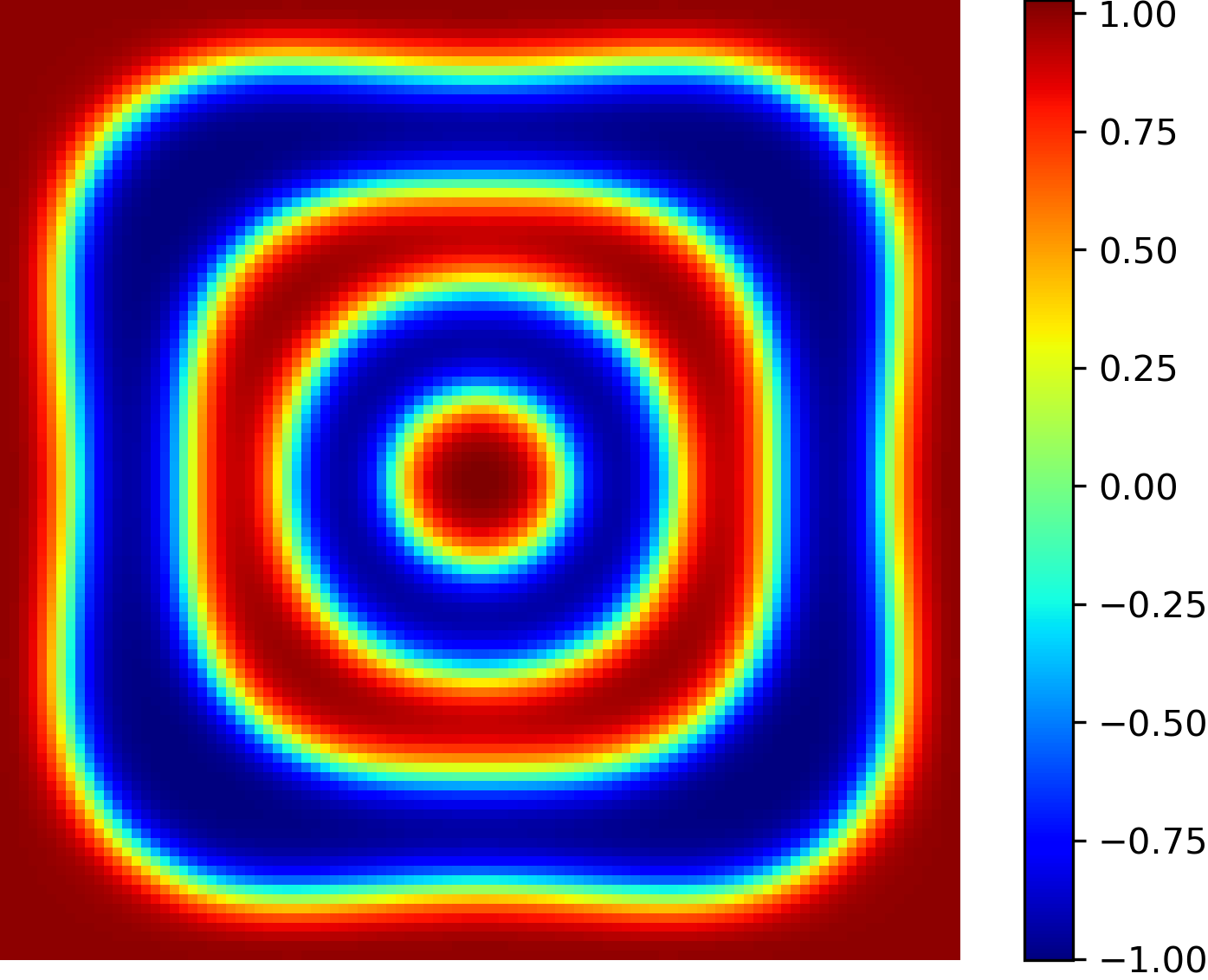}
    \end{subfigure}
    \hfill
    \begin{subfigure}[t]{0.3\textwidth}
\includegraphics[scale=0.4]{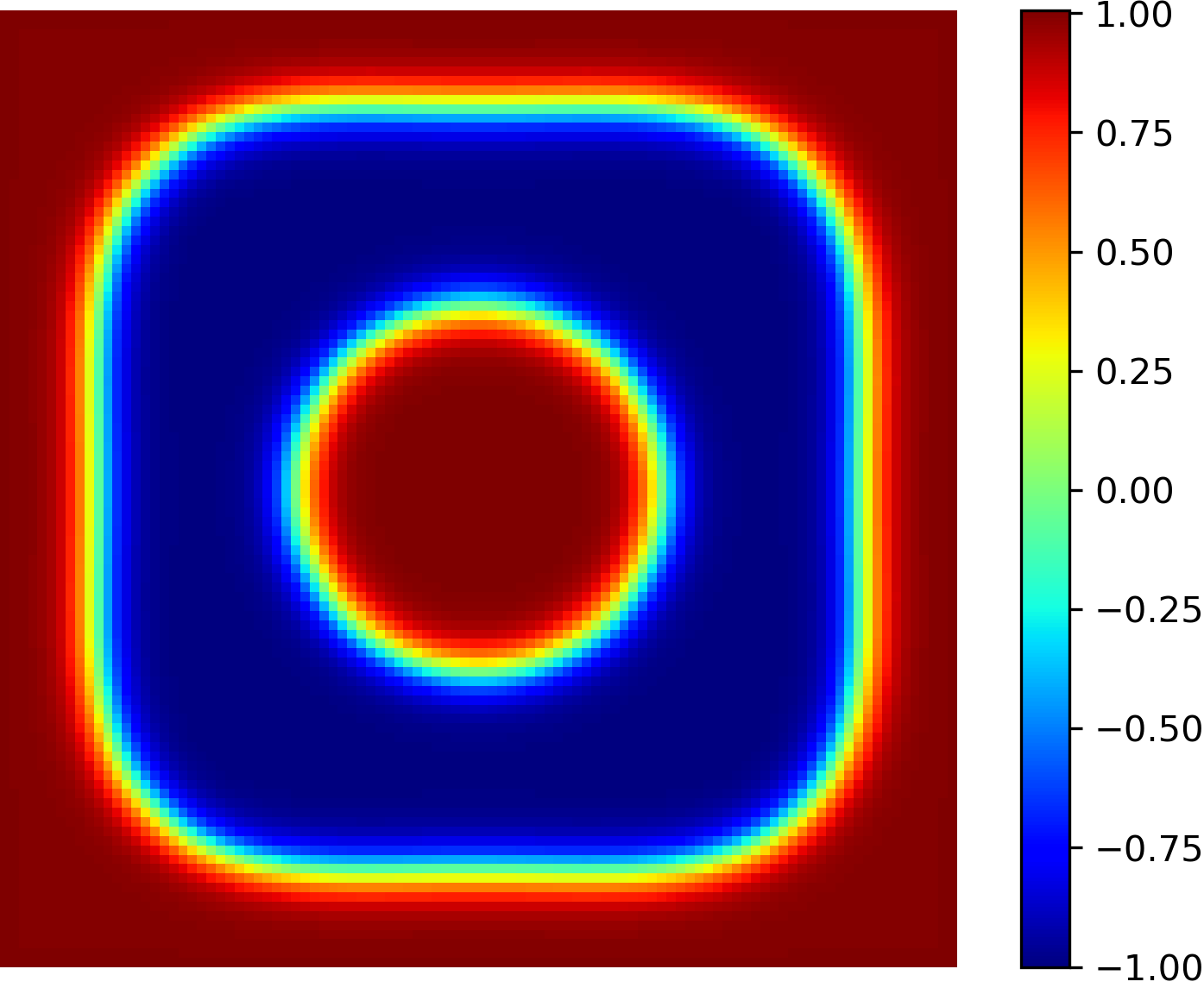}
    \end{subfigure}
\begin{subfigure}[t]{0.3\textwidth}
\includegraphics[scale=0.4]{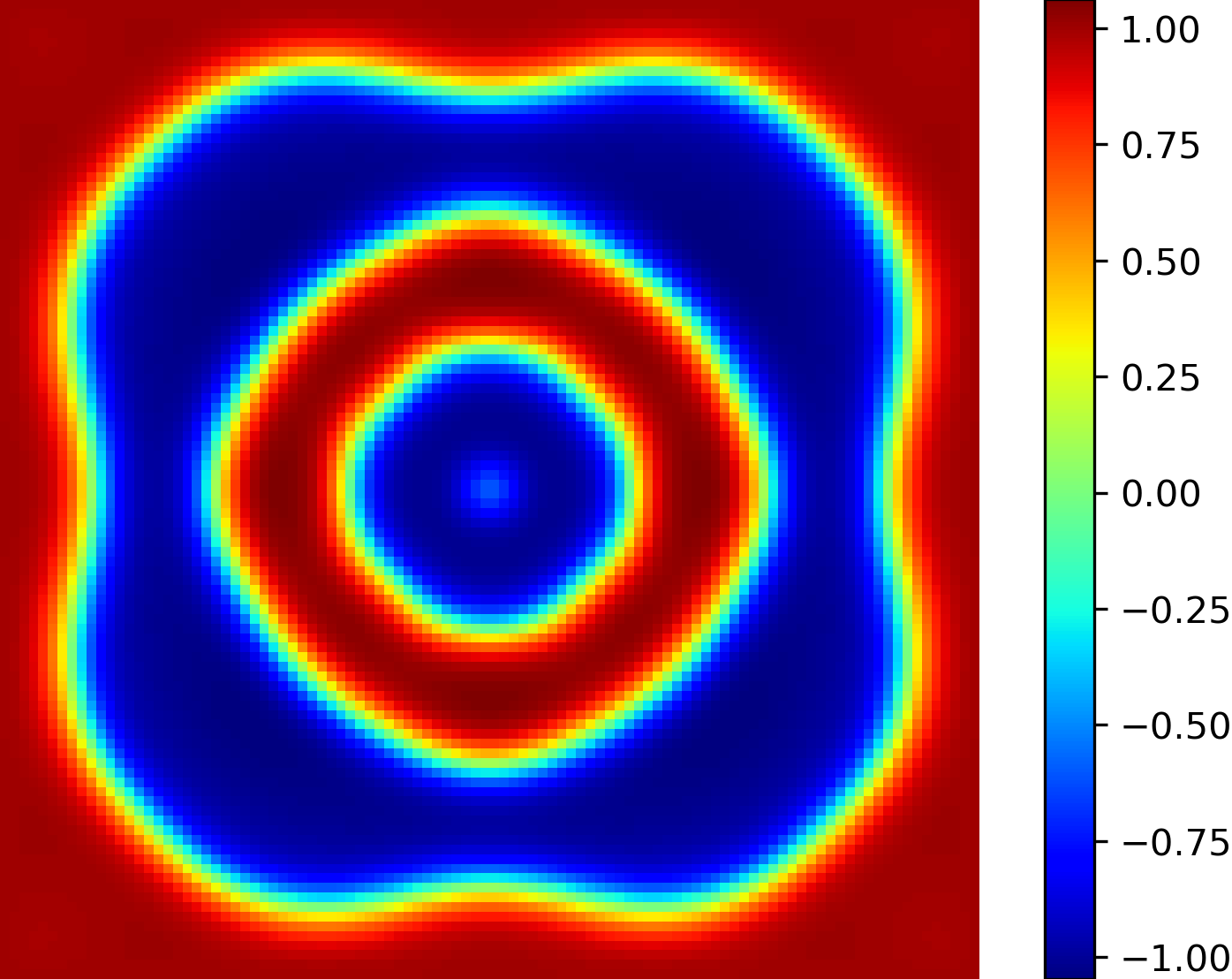}
    \end{subfigure}
    \hfill
    \begin{subfigure}[t]{0.3\textwidth}
    \includegraphics[scale=0.4]{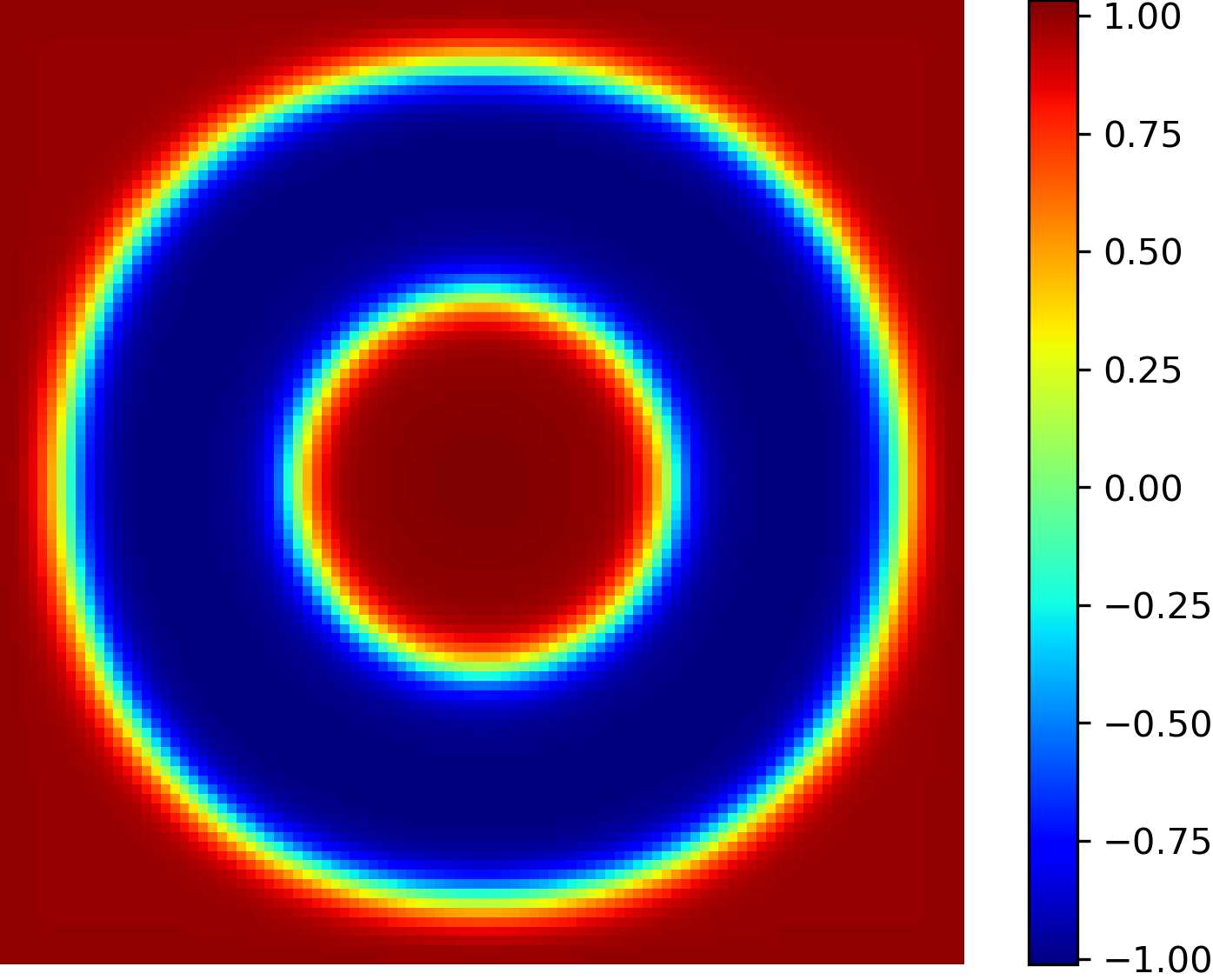}
    \end{subfigure}
    \hfill
    \begin{subfigure}[t]{0.3\textwidth}
\includegraphics[scale=0.4]{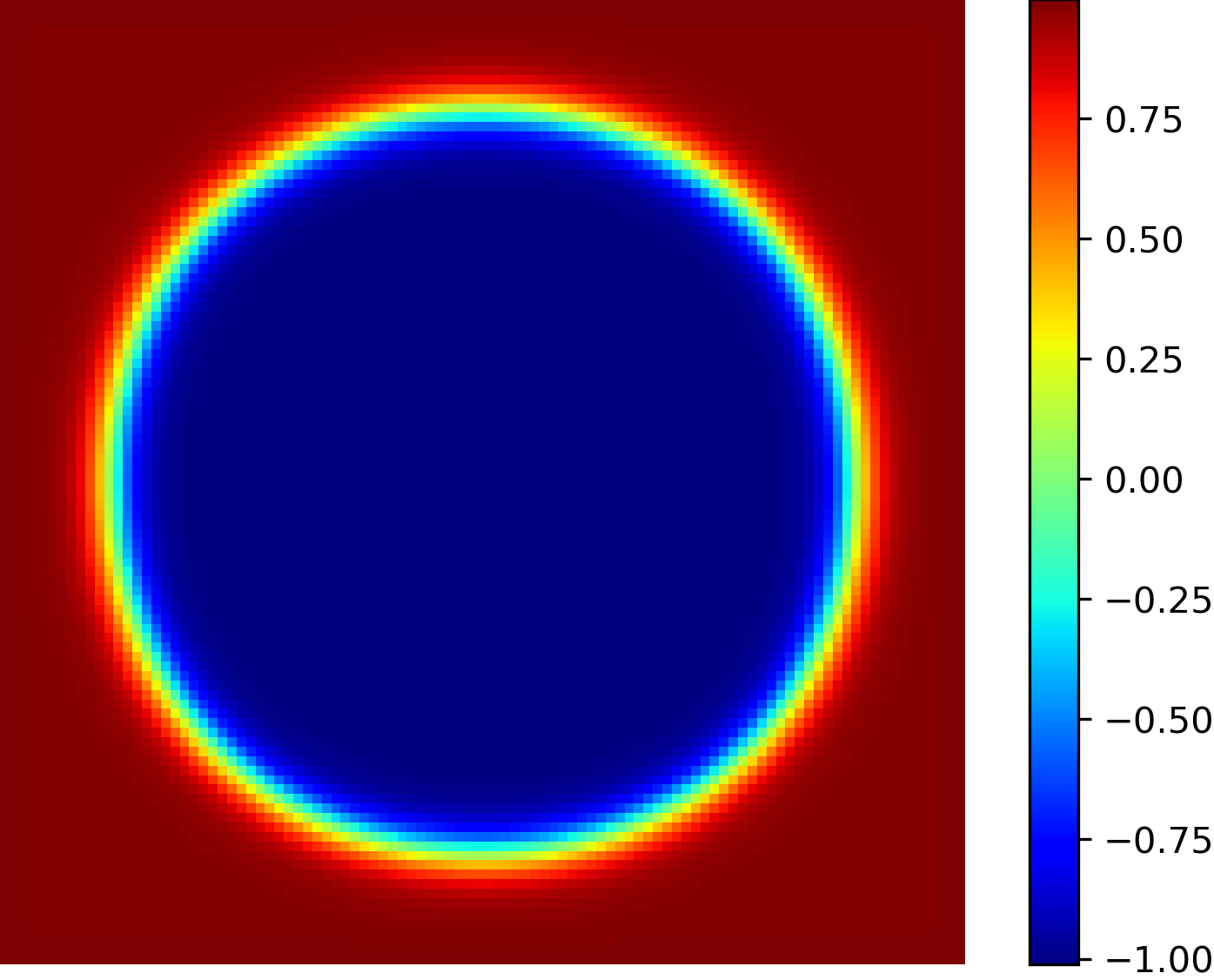}
    \end{subfigure}
\begin{subfigure}[t]{0.3\textwidth}
\includegraphics[scale=0.4]{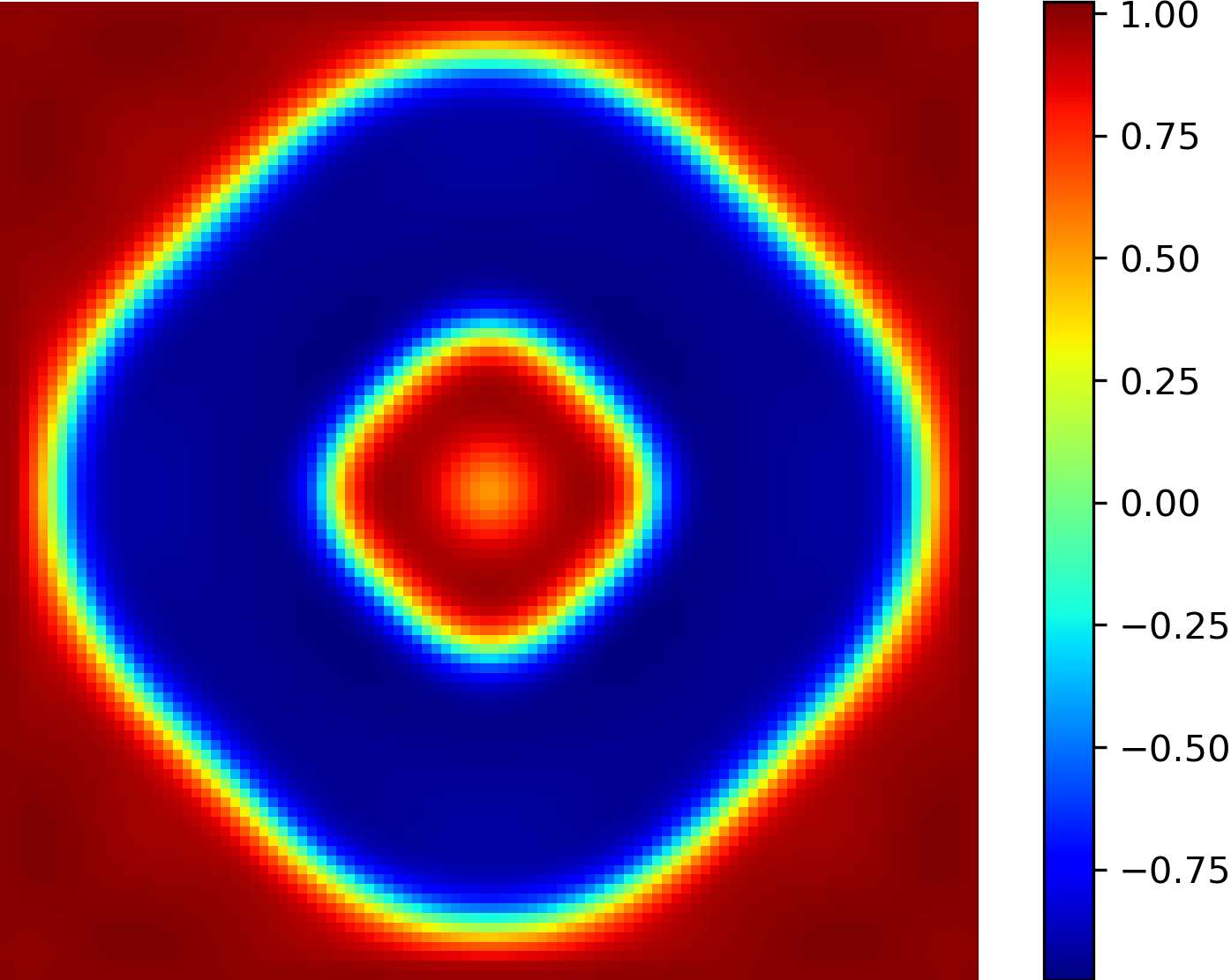}
    \end{subfigure}
    \hfill
    \begin{subfigure}[t]{0.3\textwidth}
\includegraphics[scale=0.4]{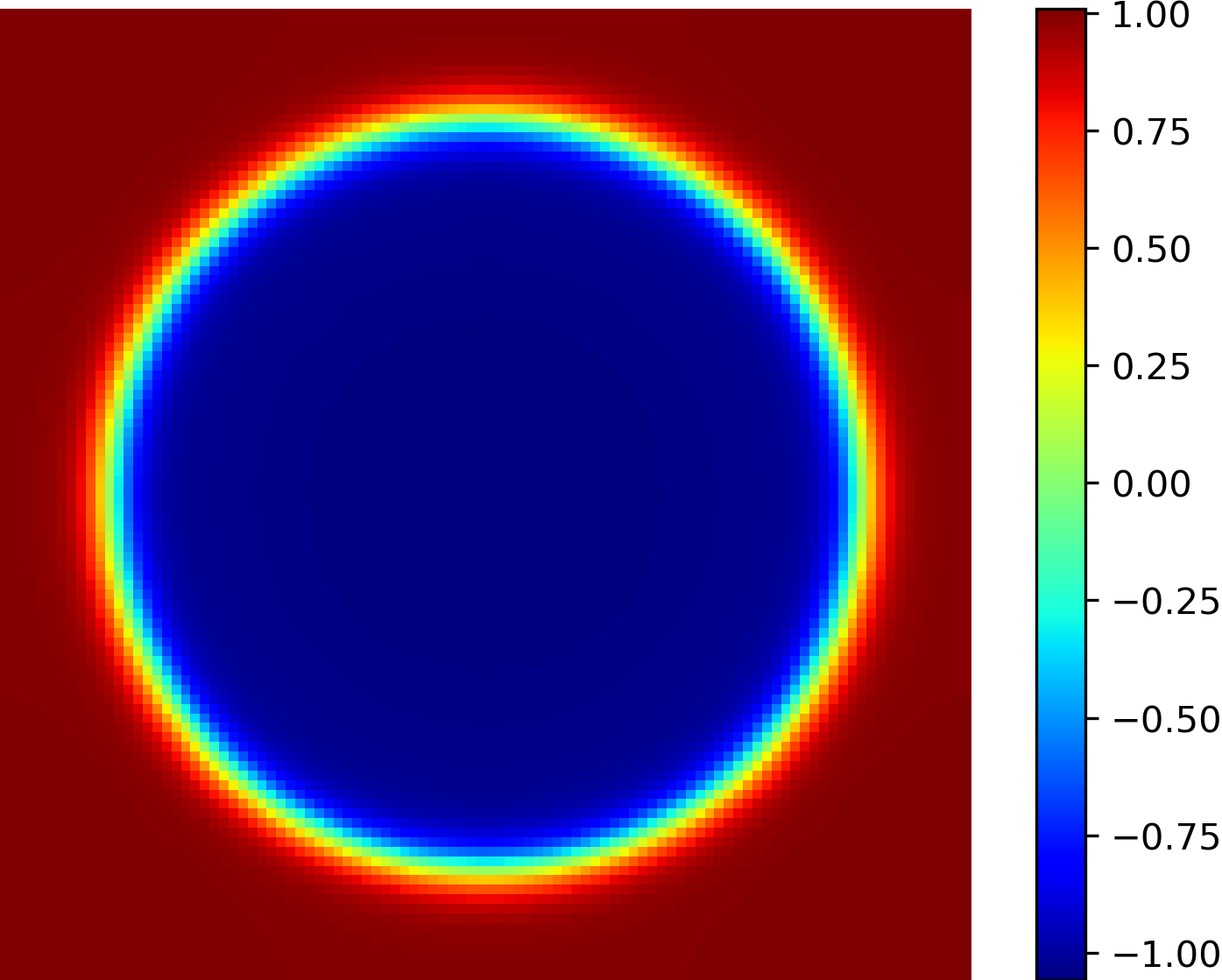}
    \end{subfigure}
    \hfill
    \begin{subfigure}[t]{0.3\textwidth}
\includegraphics[scale=0.4]{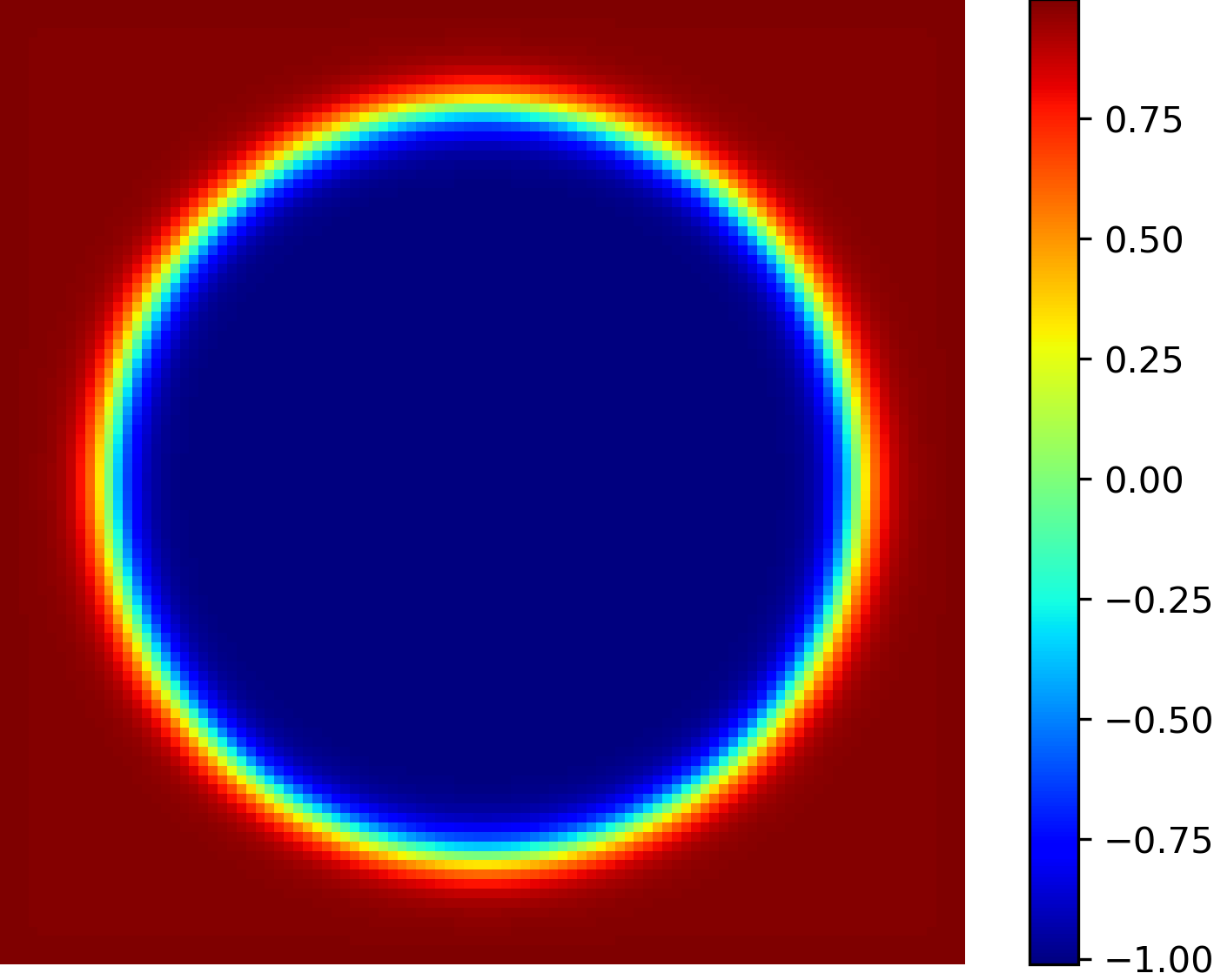}
    \end{subfigure}
\begin{subfigure}[t]{0.3\textwidth}
\includegraphics[scale=0.4]{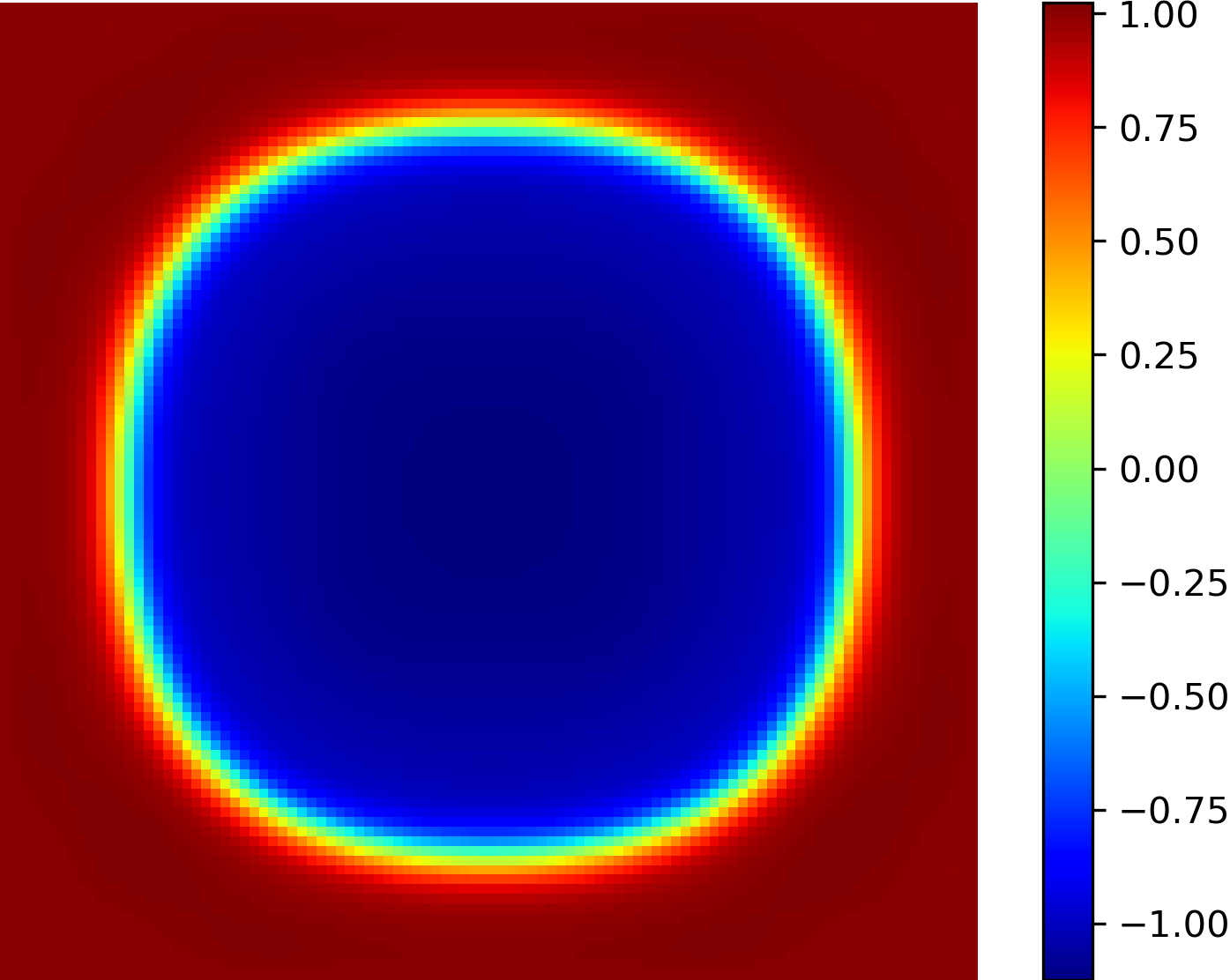}
    \end{subfigure}
    \hfill
    \begin{subfigure}[t]{0.3\textwidth}
\includegraphics[scale=0.4]{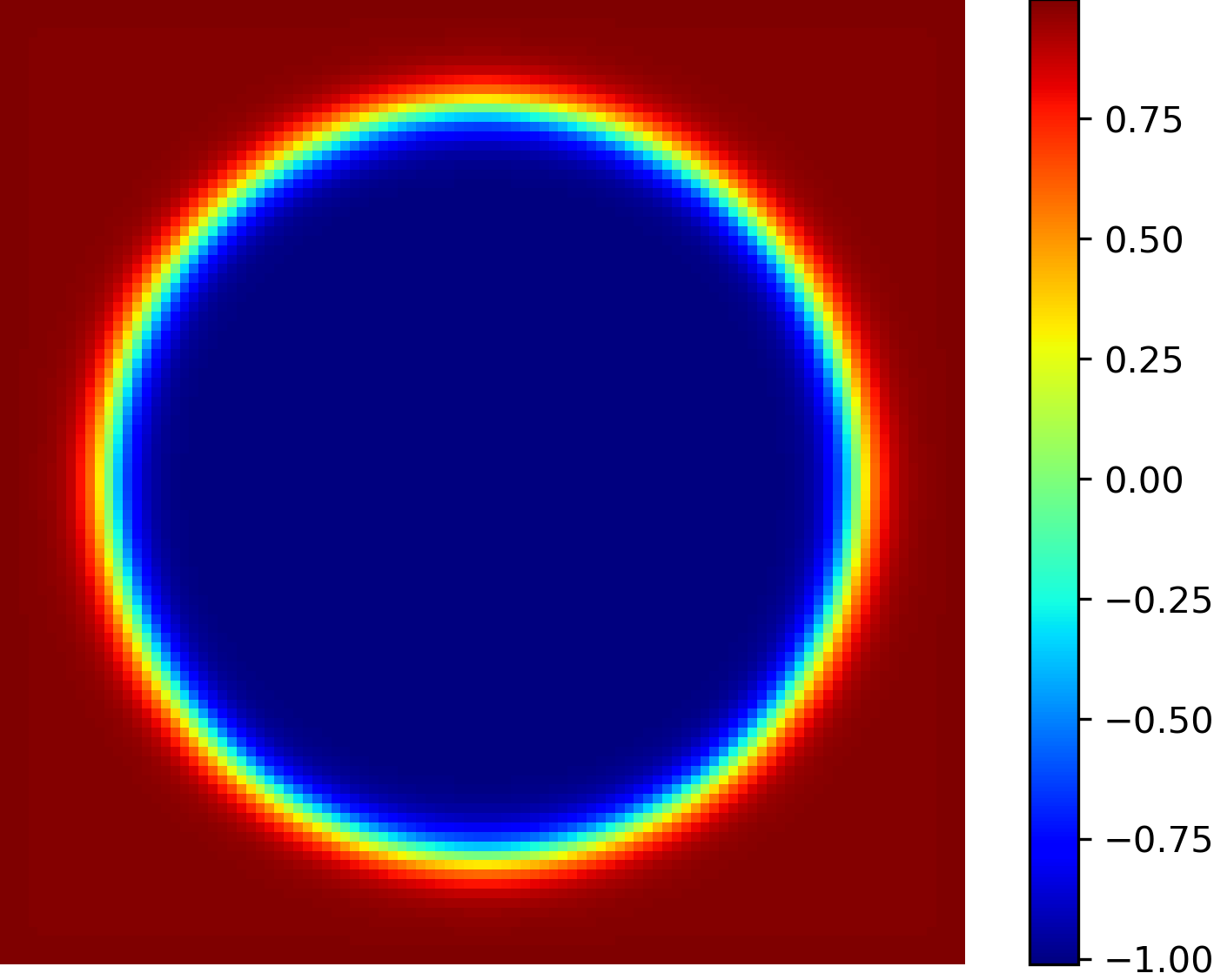}
    \end{subfigure}
    \hfill
    \begin{subfigure}[t]{0.3\textwidth}
\includegraphics[scale=0.4]{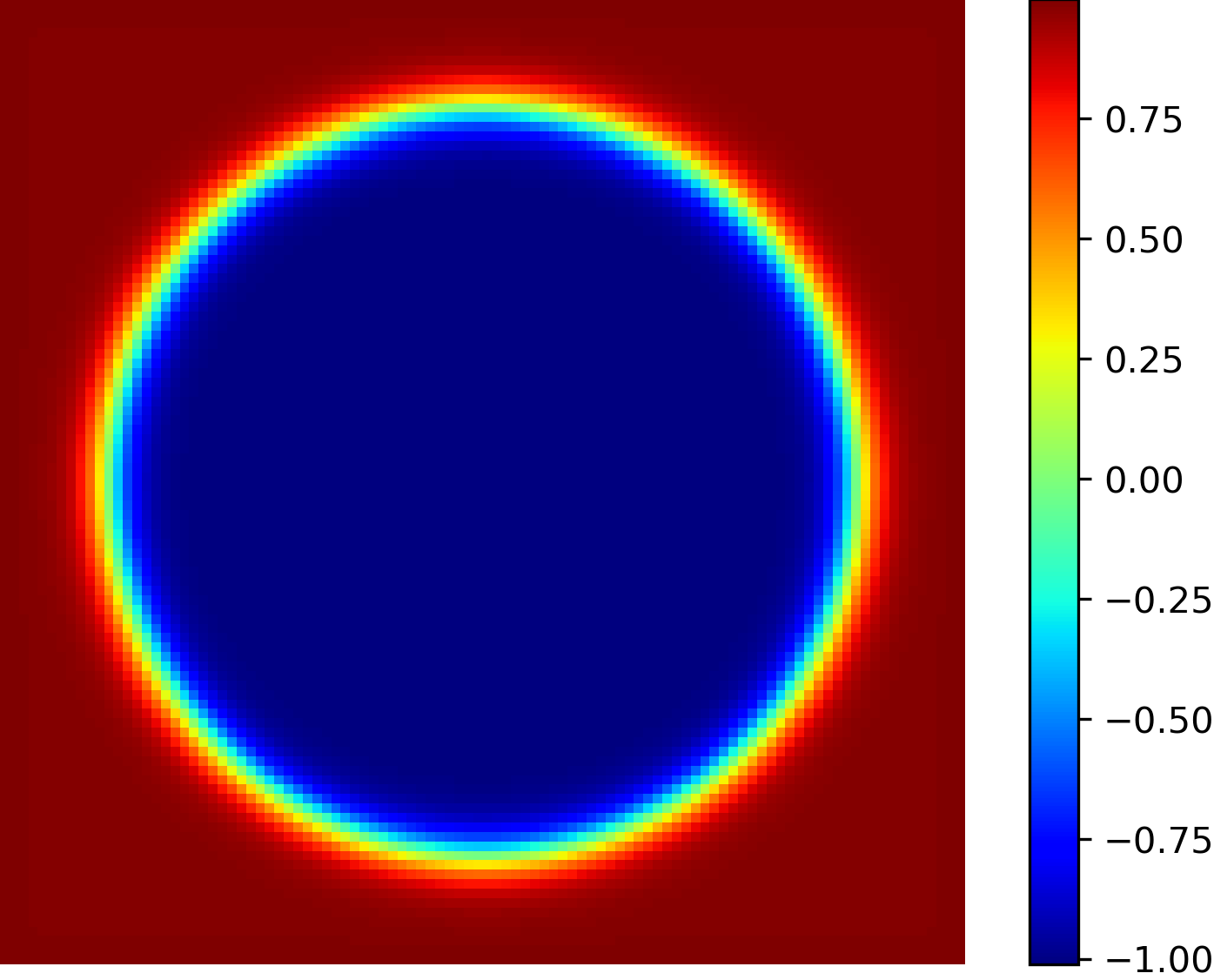}
    \end{subfigure}
\caption{
    Case 1: Snapshots of the numerical approximation are taken at $T= 0.015$, $0.045$, $0.15$, $0.3$, and $0.6$ with different $\beta$. Left: $\beta=1$; Middle: $\beta=0.1$; Right: $\beta=0$.}
    \label{5.4}
\end{figure}

\textbf{Case 2:} In Figure \ref{5.5}, we set the random value as,
\begin{equation}
    \phi_0(x,y) = \left\{
        \begin{array}{ll}
        \text{rand}[-0.1,0.1], & (x,y)\text{ in\, } \Omega , \\[8pt]
        \text{rand}[0.4,0.6], &(x,y)\text{ on\, } \Gamma.    
        \end{array}
        \right.
\end{equation}
By varying the values of 
$\beta$, we obtain the numerical solutions for 
$\phi$, as illustrated in Figure \ref{5.7}, while the corresponding energy and mass evolutions are presented in Figure \ref{5.6}.
Similarly, the Figure \ref{5.7} illustrates that the phase of the system is coarsening more slowly when $\beta$ becomes lager. The Figure \ref{5.6} indicates that the discrete energy of the system is decreasing slowly by enlarging the value of $\beta$. We also observe that the mass is conserved  in  the bulk and on the boundary respectively.
\begin{figure}[!htbp]
    \centering
    \includegraphics[width=0.4\textwidth]{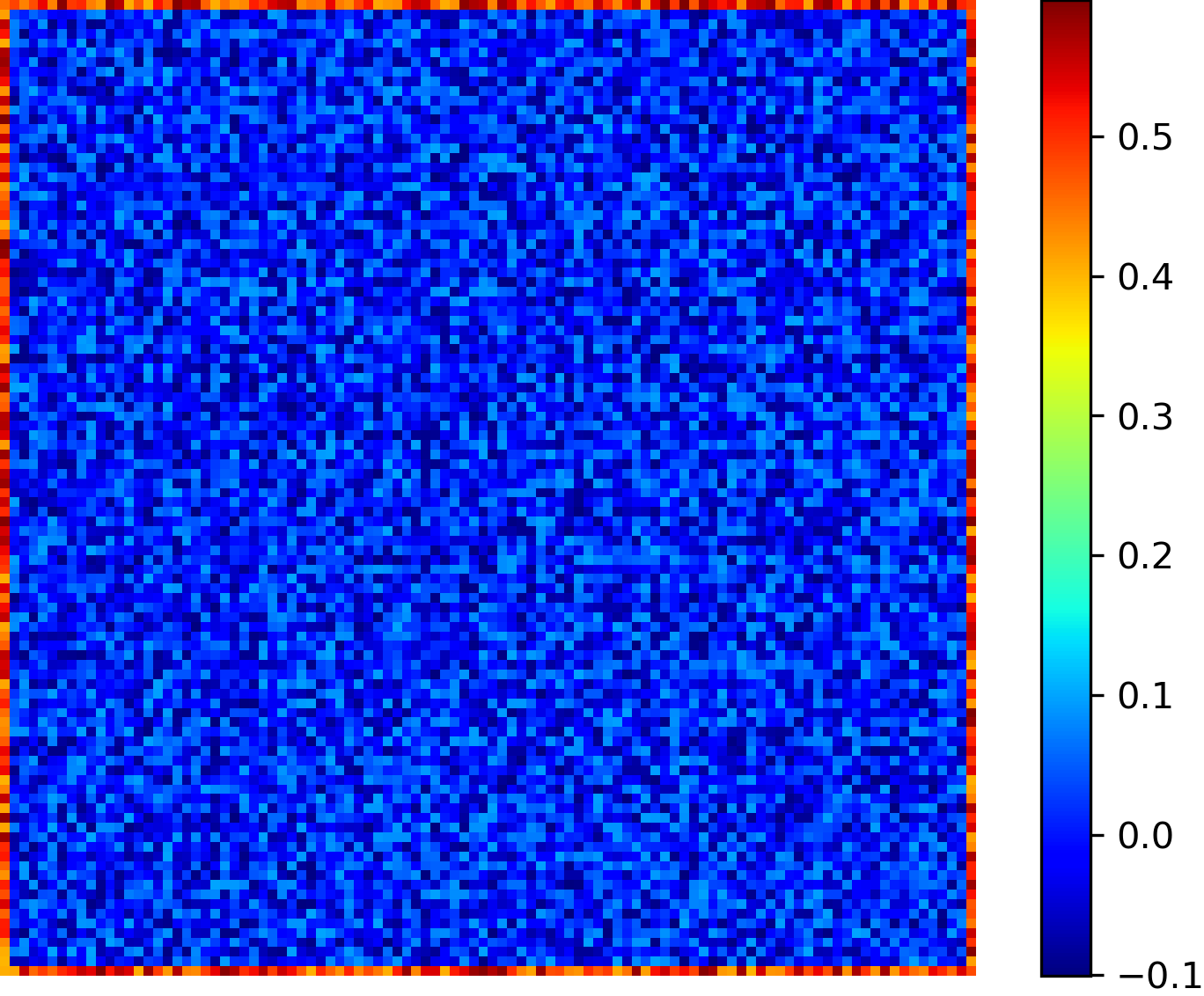}
    \caption{The initial data of Case 2.}

    \label{5.5}

\end{figure}

\begin{figure}[!htbp]
    \centering
    \scalebox{0.85}{
        \begin{minipage}{\textwidth}
            \begin{subfigure}[b]{0.49\textwidth}
                \includegraphics[width=\textwidth]{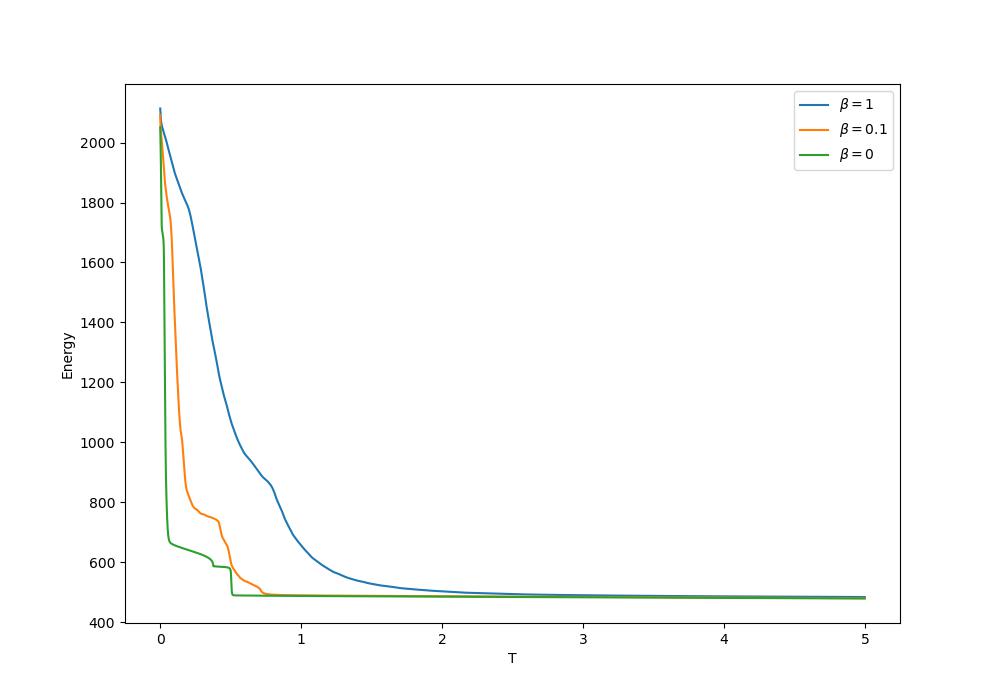}
                \caption{Energy curves with different $\beta$.}
            \end{subfigure}
            \begin{subfigure}[b]{0.49\textwidth}
                \includegraphics[width=\textwidth]{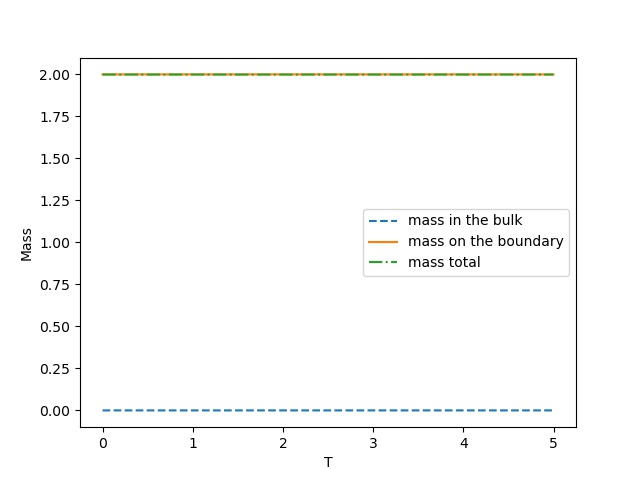}
                \caption{The mass with $\beta = 1$.}
            \end{subfigure}
            \begin{subfigure}[b]{0.49\textwidth}
                \includegraphics[width=\textwidth]{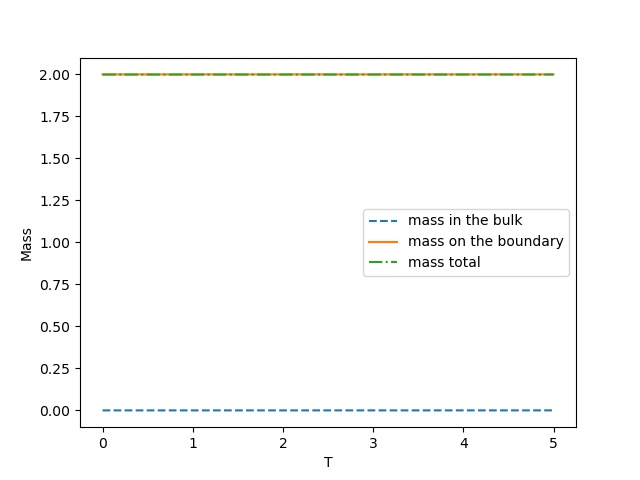}
                \caption{The mass with $\beta = 0.1$.}
            \end{subfigure}
            \begin{subfigure}[b]{0.49\textwidth}
                \includegraphics[width=\textwidth]{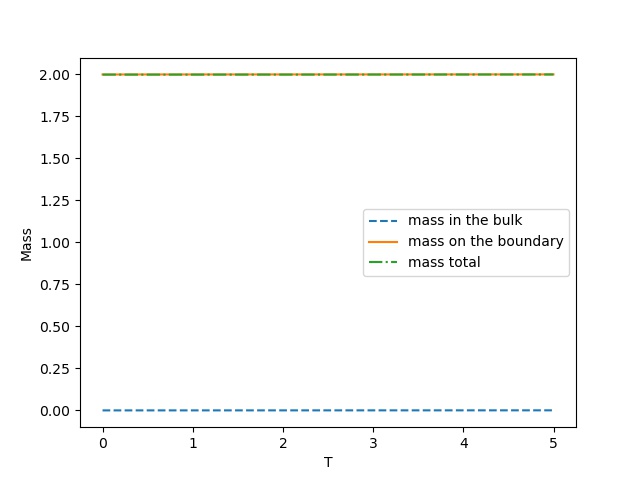}
                \caption{The mass with $\beta = 0$.}
            \end{subfigure}
        \end{minipage}
    }
    \caption{The energy evolution and the mass evolutions of Case 2.}

    \label{5.6}

\end{figure}

\begin{figure}[!htbp]
    \centering
    \begin{subfigure}[t]{0.3\textwidth}
        \includegraphics[scale=0.4]{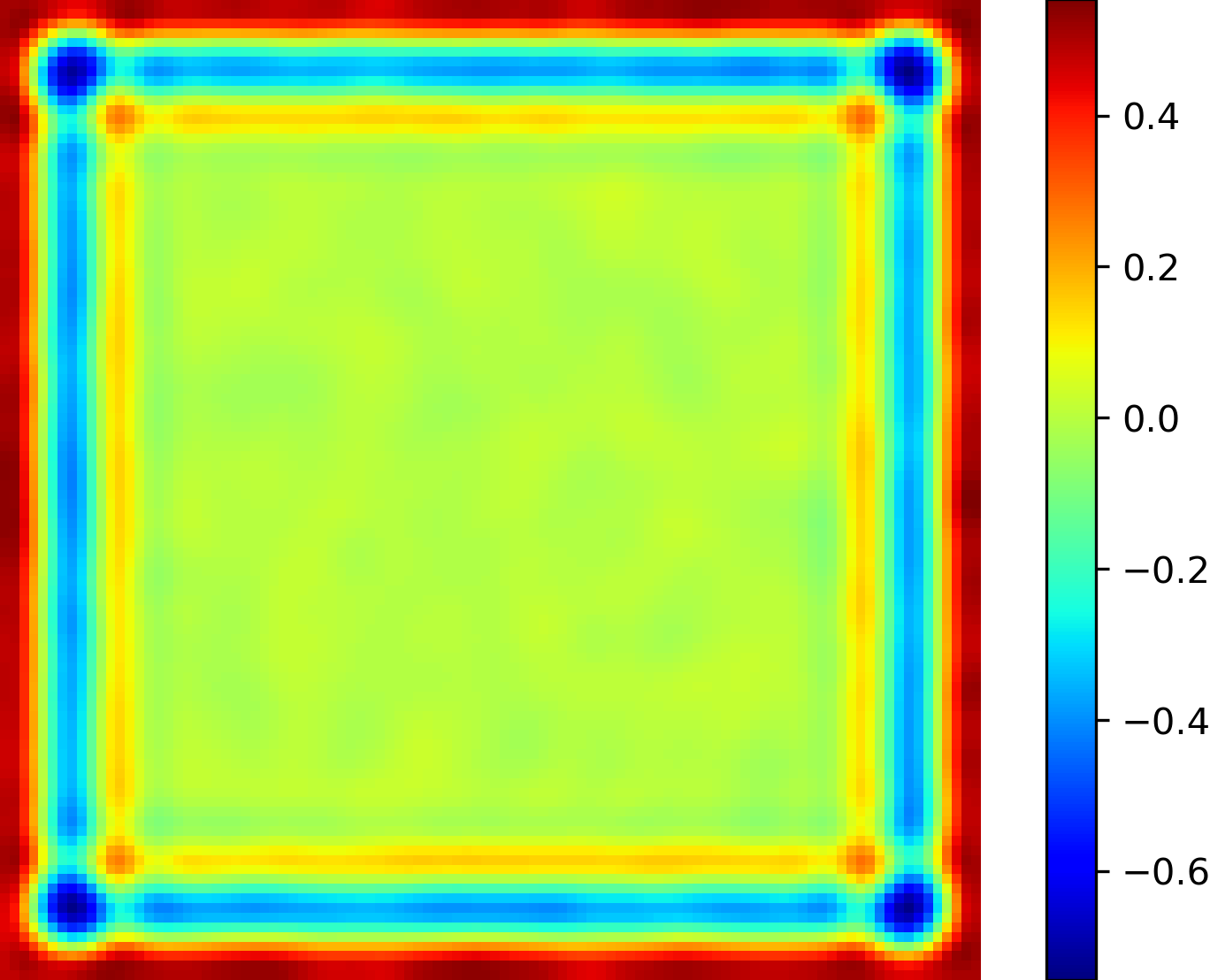}
    \end{subfigure}
    \hfill
    \begin{subfigure}[t]{0.3\textwidth}
        \includegraphics[scale=0.4]{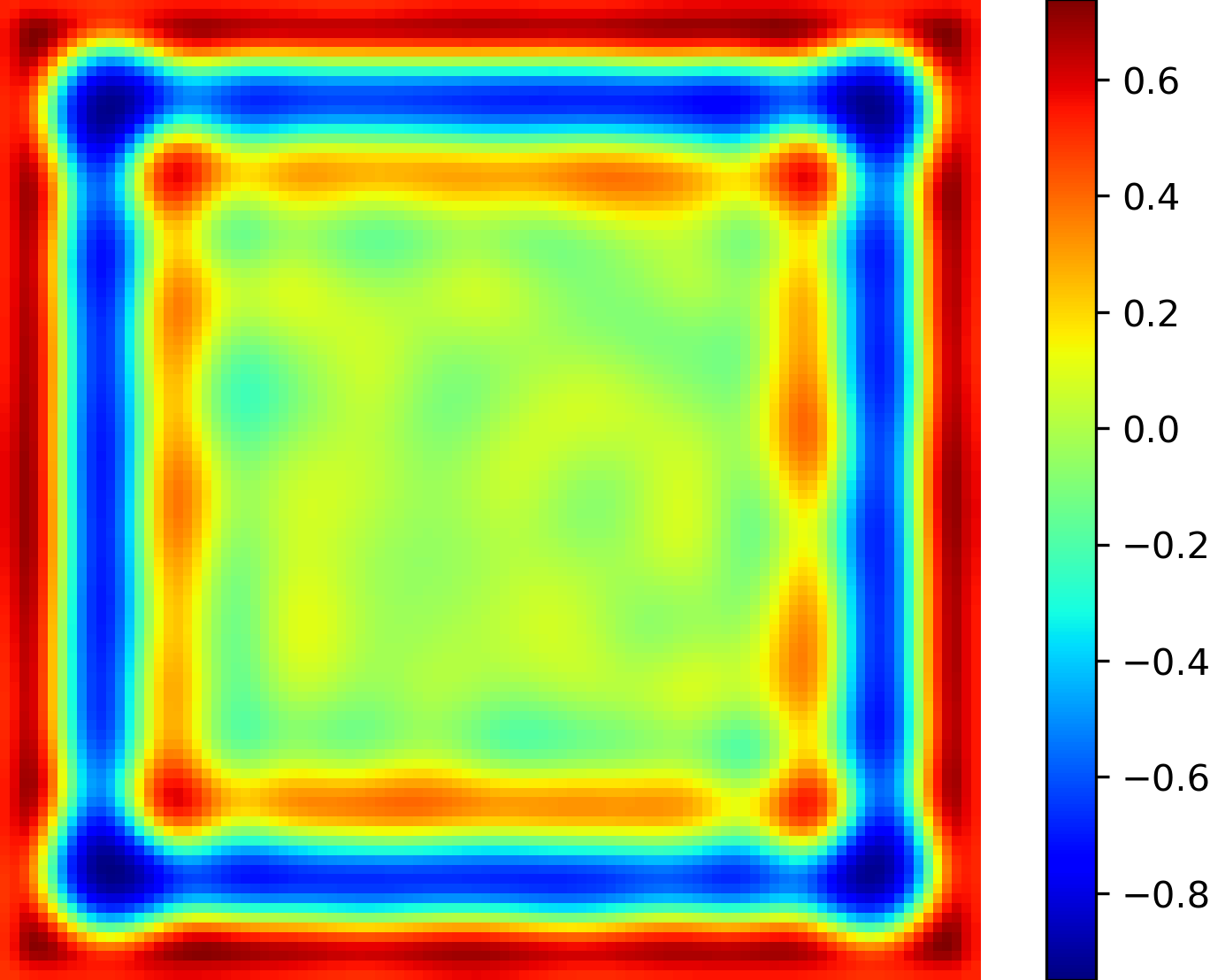}
    \end{subfigure}
    \hfill
    \begin{subfigure}[t]{0.3\textwidth}
        \includegraphics[scale=0.4]{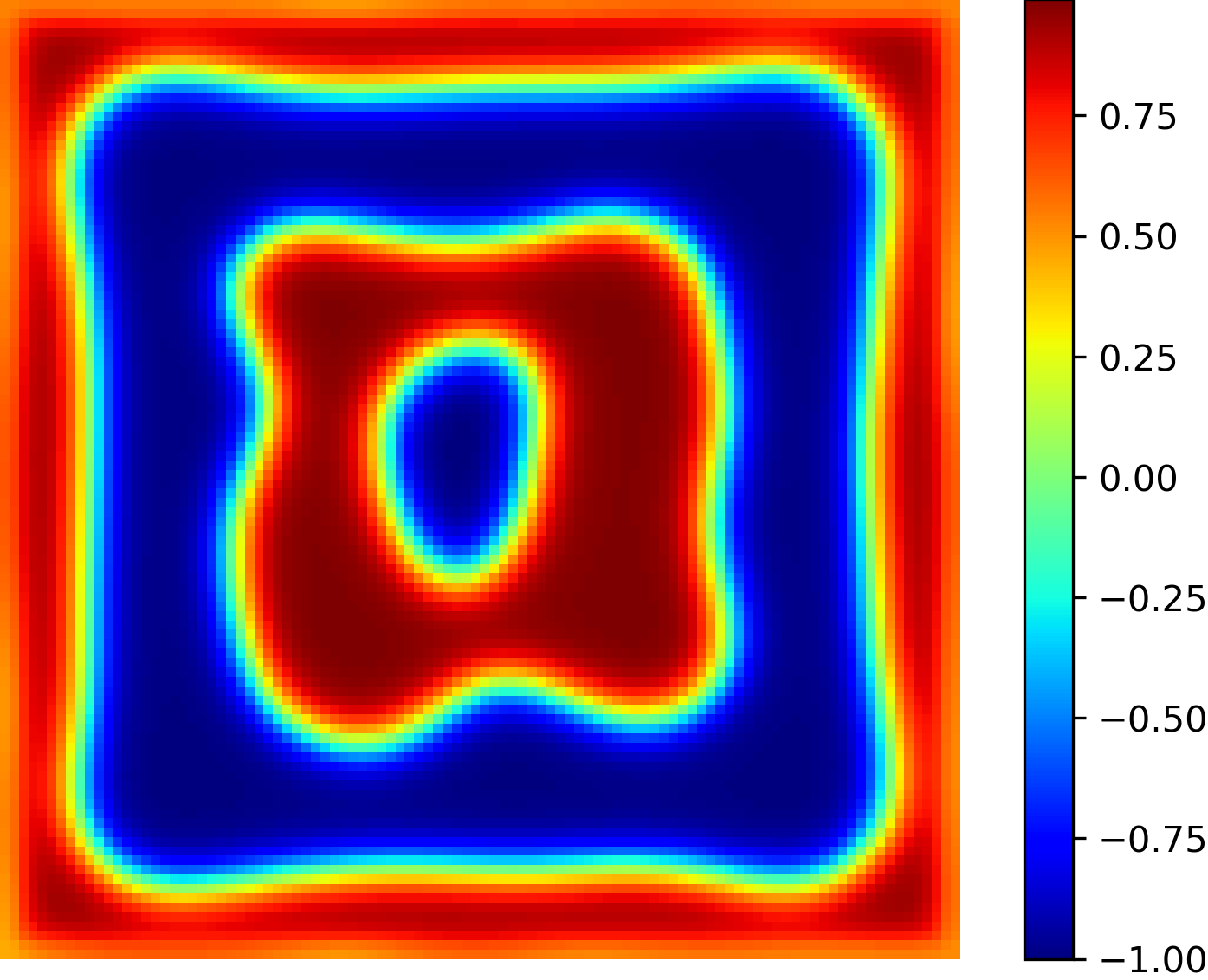}
    \end{subfigure}

    \begin{subfigure}[t]{0.3\textwidth}
        \includegraphics[scale=0.4]{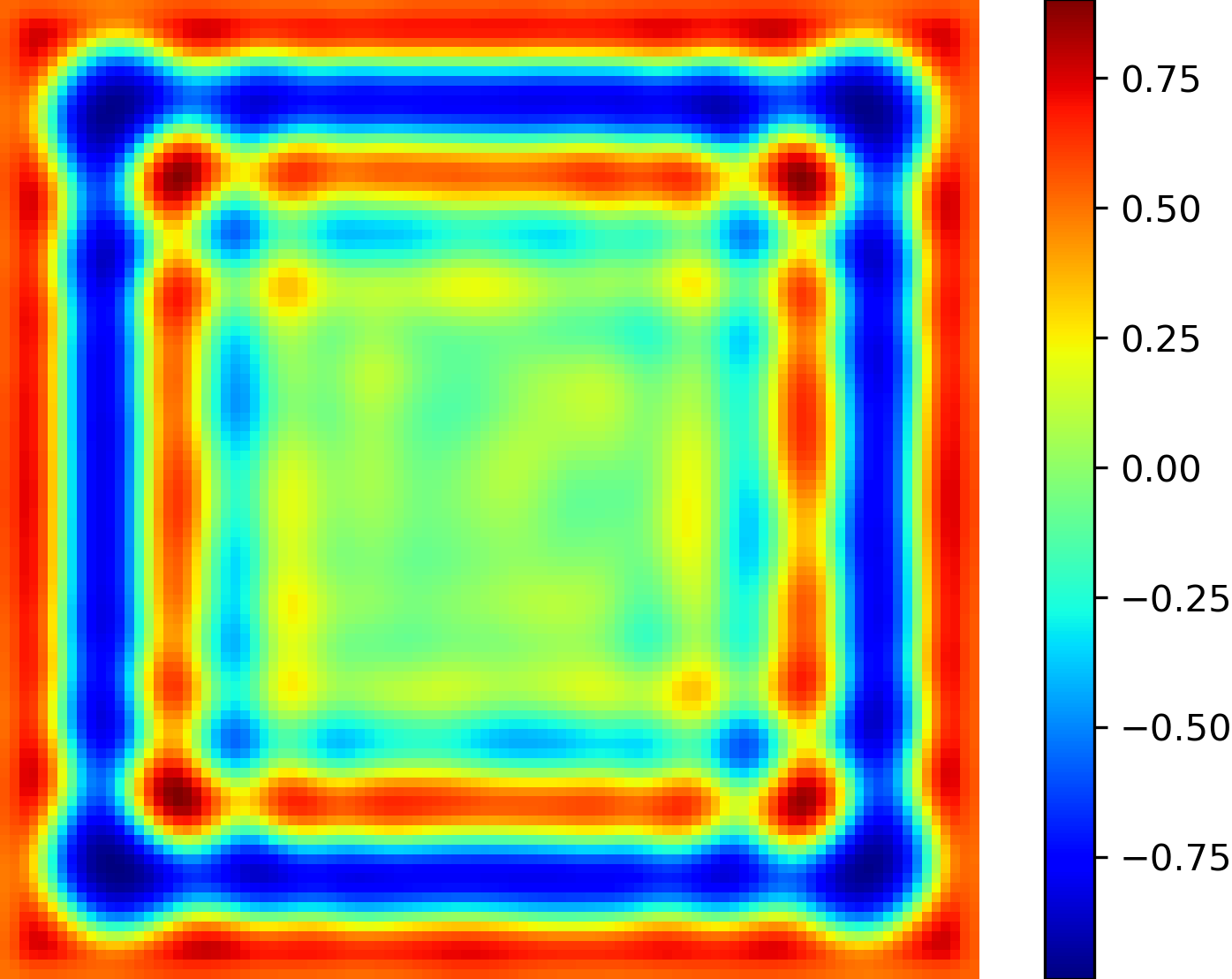}
    \end{subfigure}
    \hfill
    \begin{subfigure}[t]{0.3\textwidth}
        \includegraphics[scale=0.4]{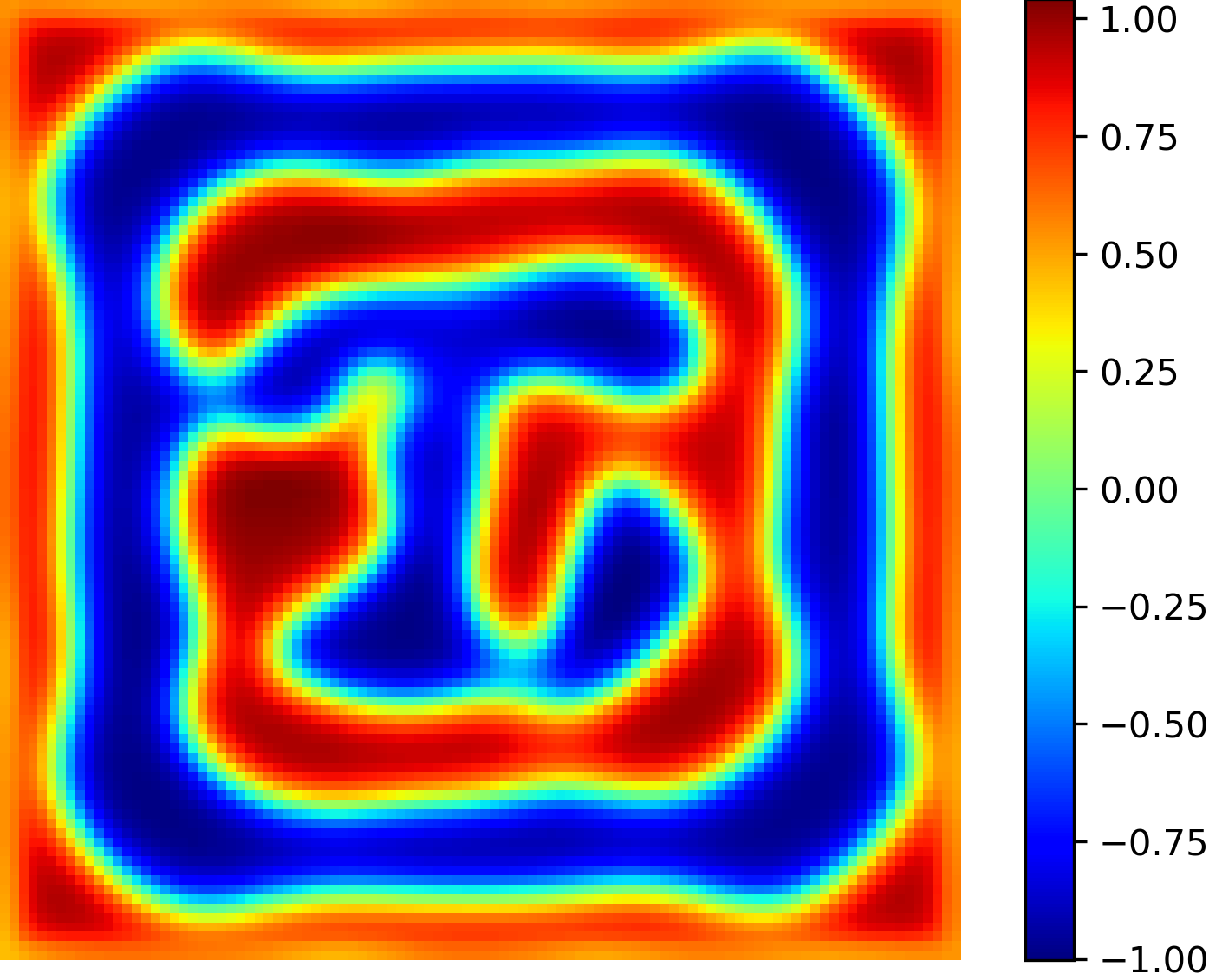}
    \end{subfigure}
    \hfill
    \begin{subfigure}[t]{0.3\textwidth}
        \includegraphics[scale=0.4]{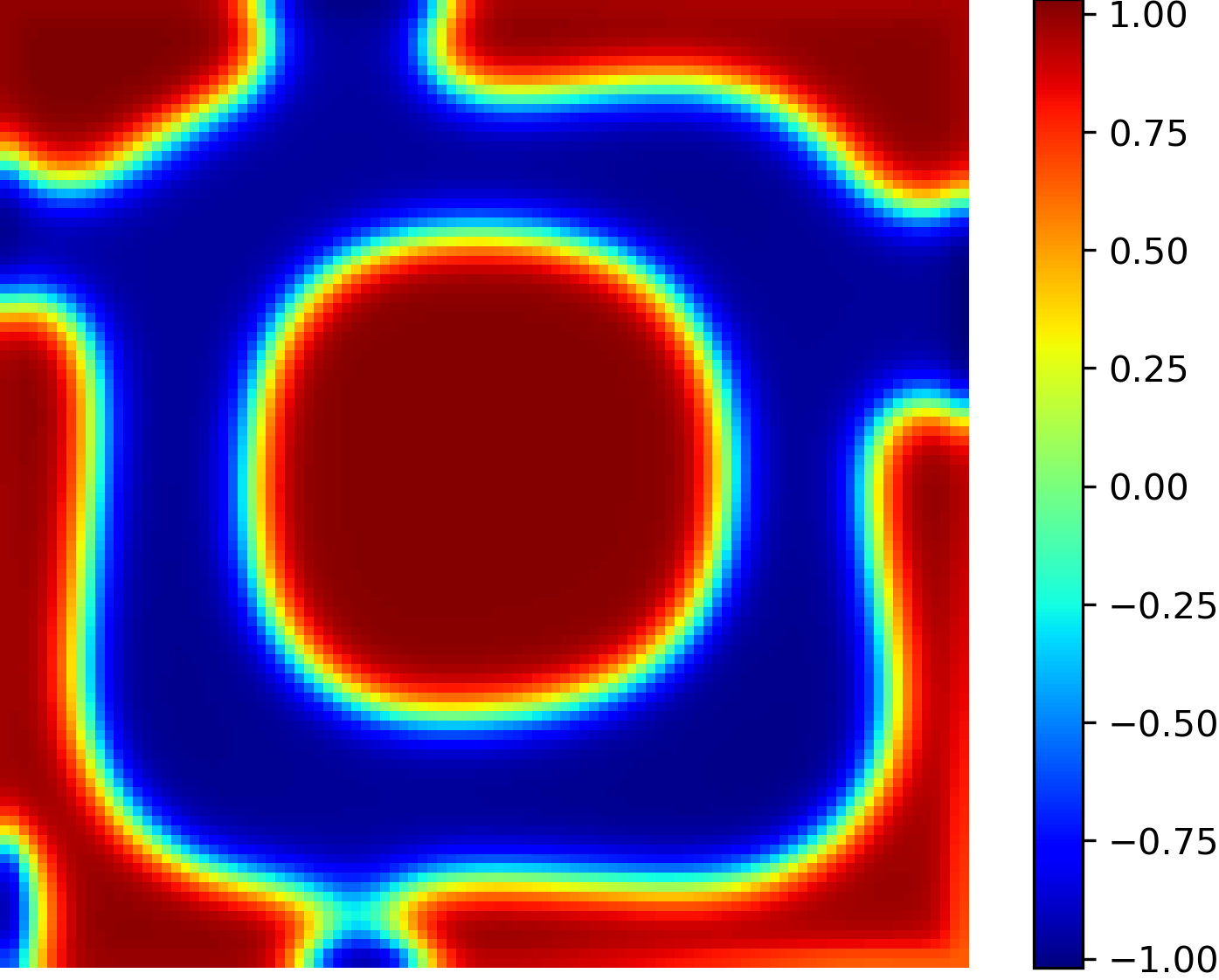}
    \end{subfigure}

    \begin{subfigure}[t]{0.3\textwidth}
        \includegraphics[scale=0.4]{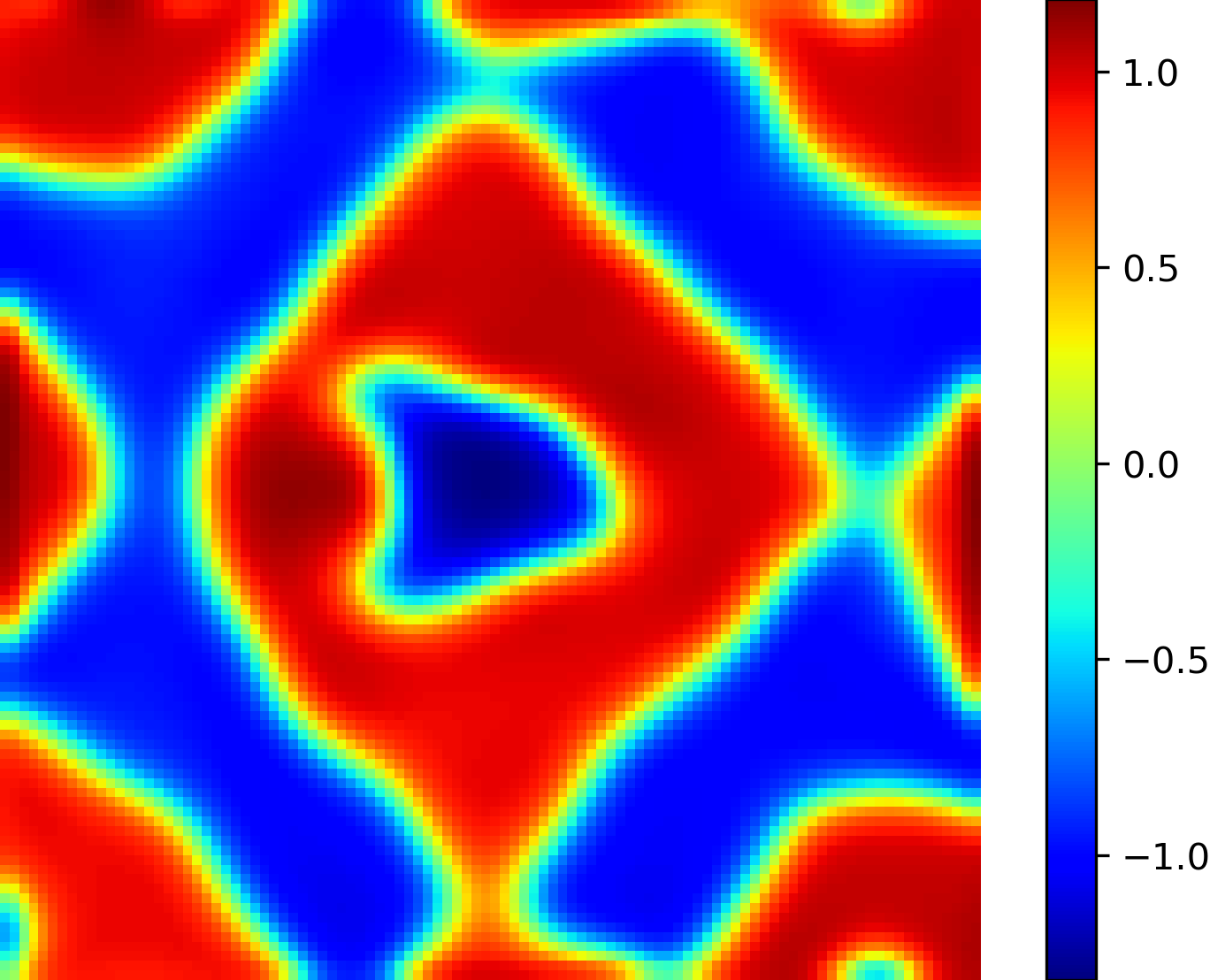}
    \end{subfigure}
    \hfill
    \begin{subfigure}[t]{0.3\textwidth}
        \includegraphics[scale=0.4]{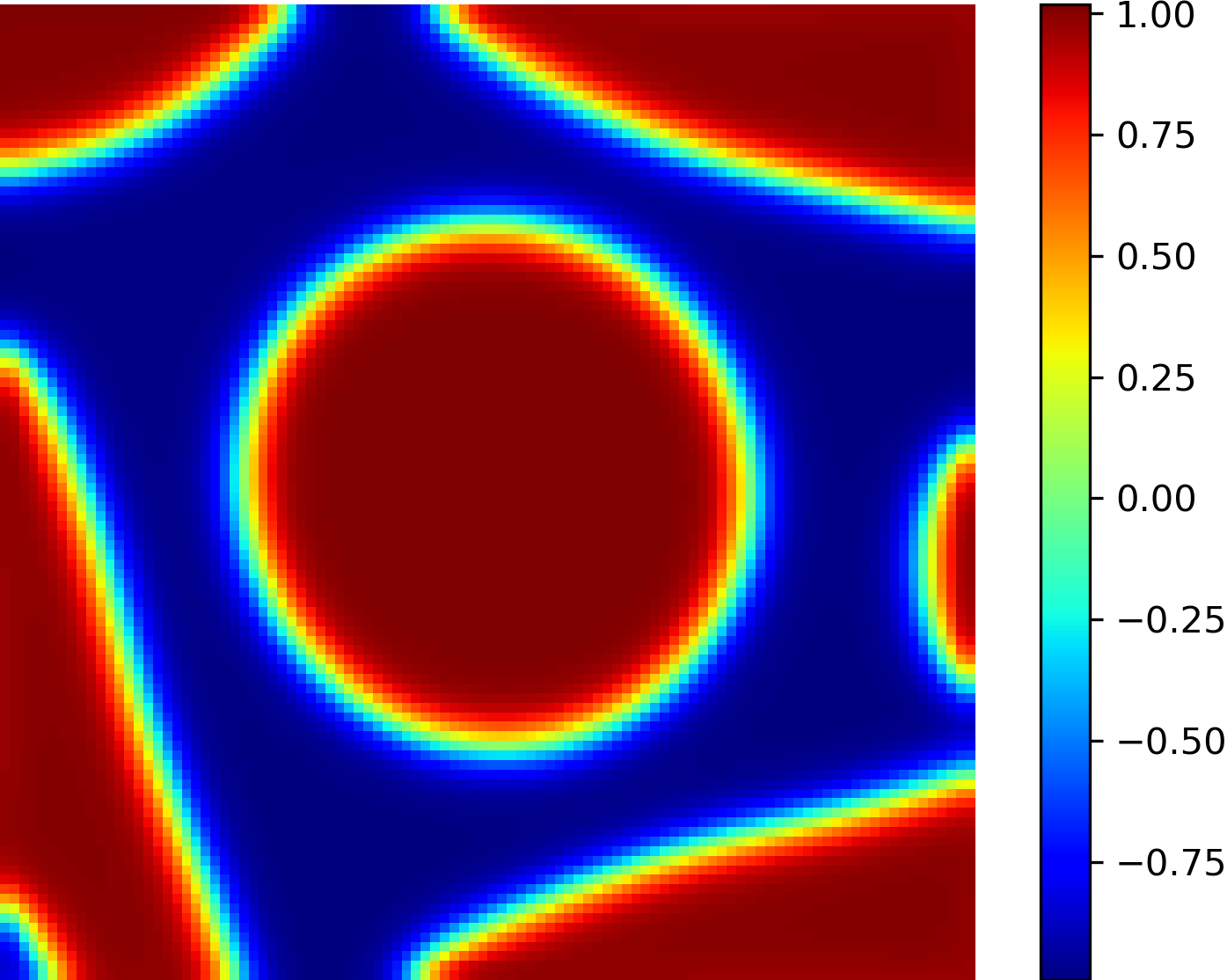}
    \end{subfigure}
    \hfill
    \begin{subfigure}[t]{0.3\textwidth}
        \includegraphics[scale=0.4]{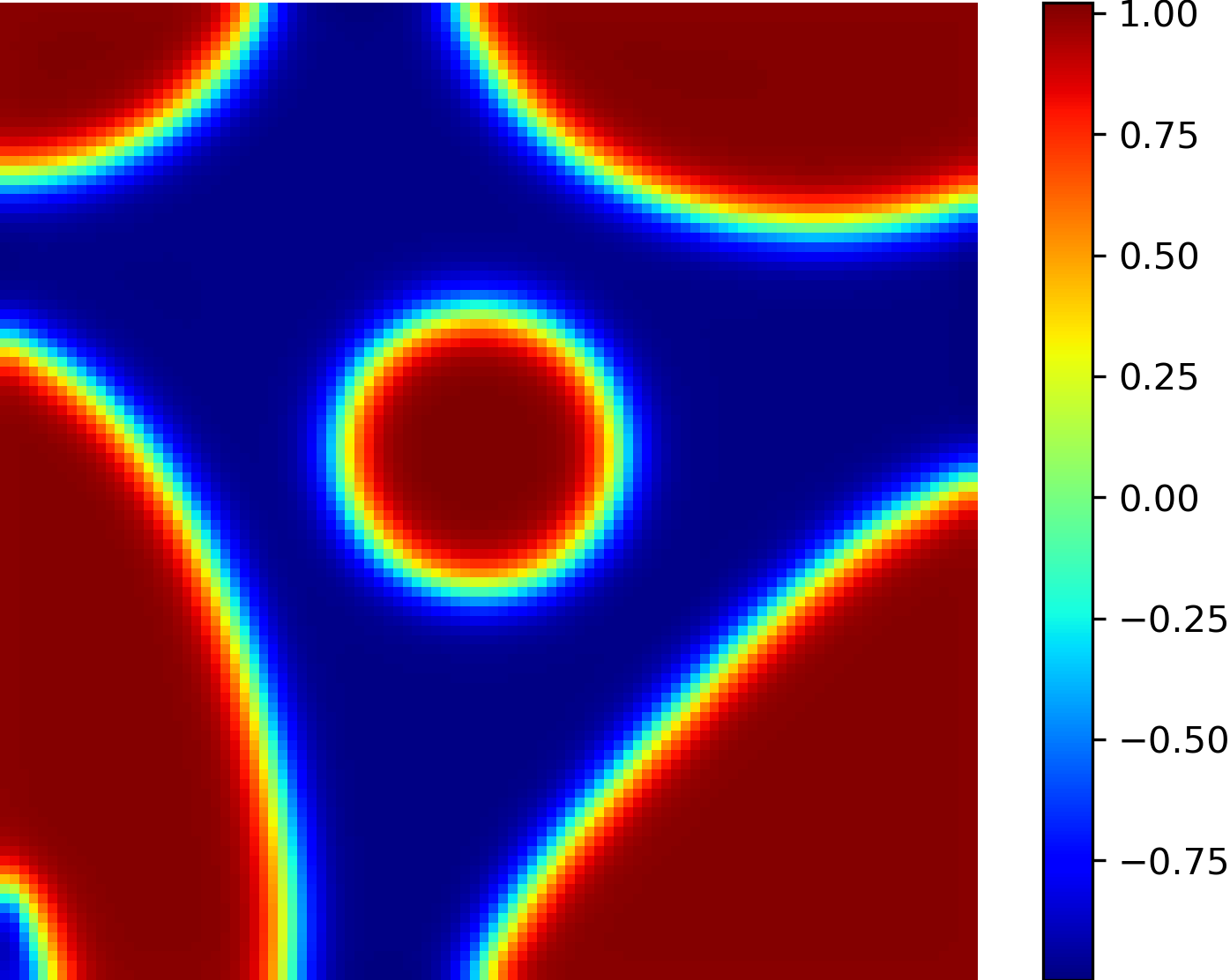}
    \end{subfigure}

    \begin{subfigure}[t]{0.3\textwidth}
        \includegraphics[scale=0.4]{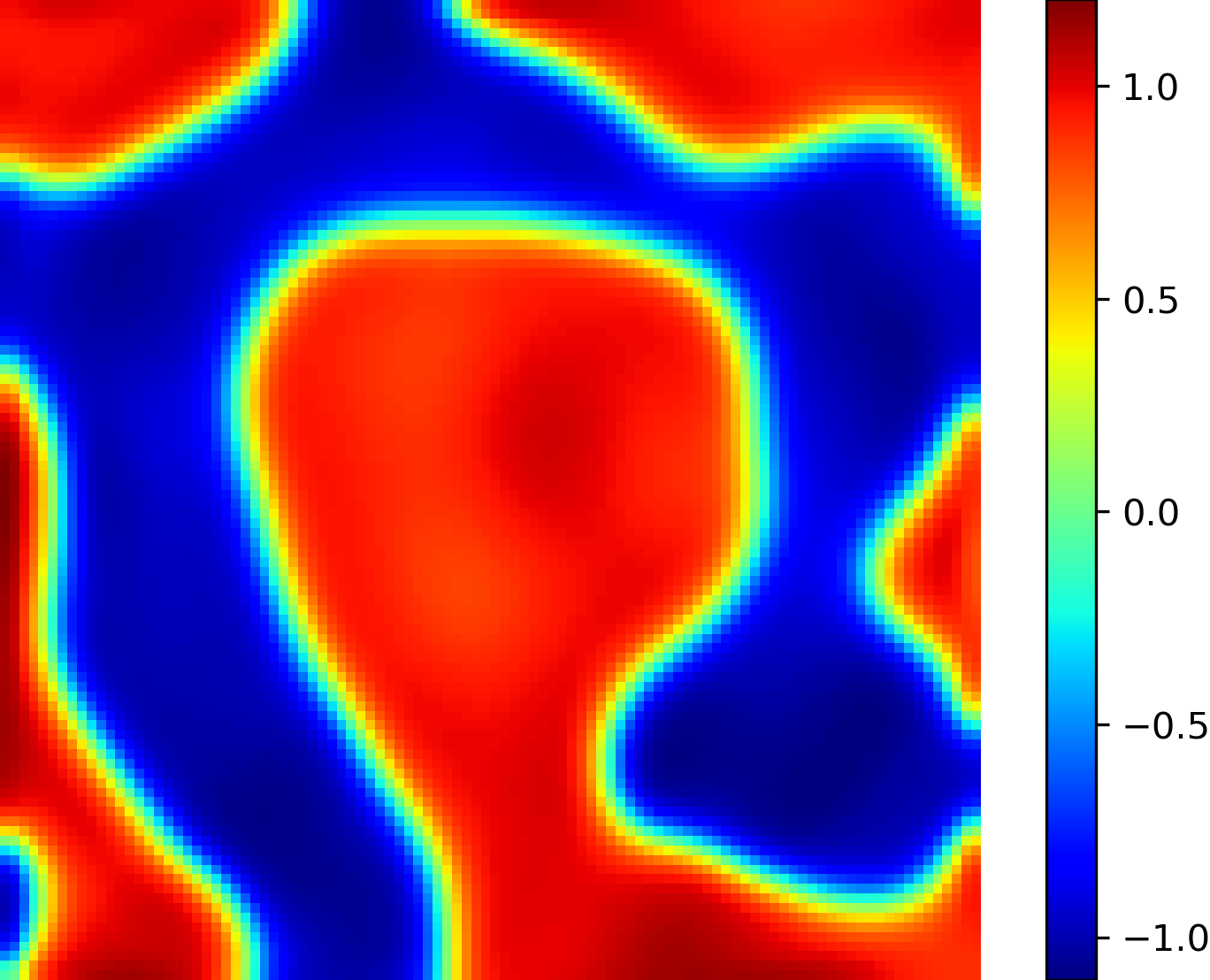}
    \end{subfigure}
    \hfill
    \begin{subfigure}[t]{0.3\textwidth}
        \includegraphics[scale=0.4]{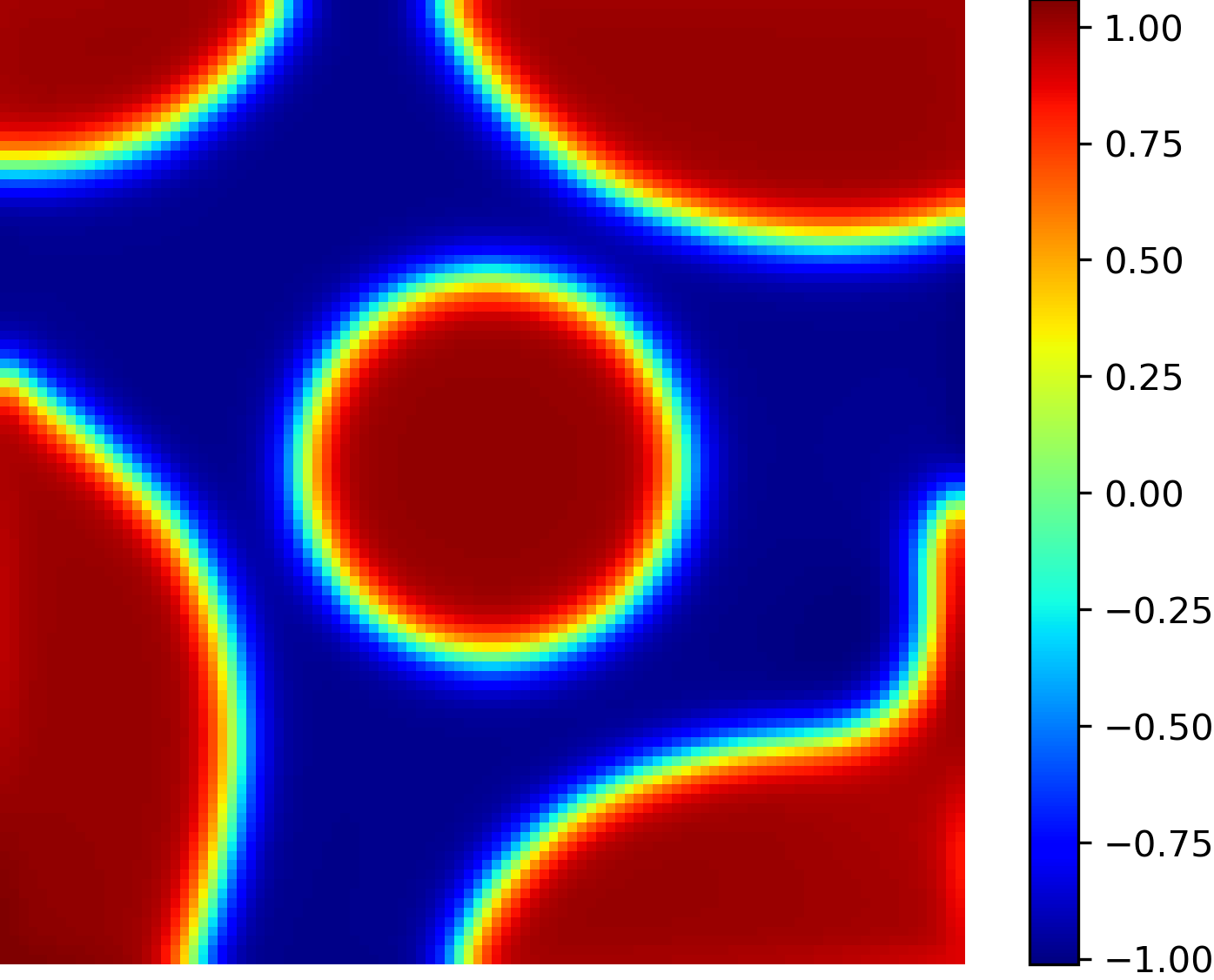}
    \end{subfigure}
    \hfill
    \begin{subfigure}[t]{0.3\textwidth}
        \includegraphics[scale=0.4]{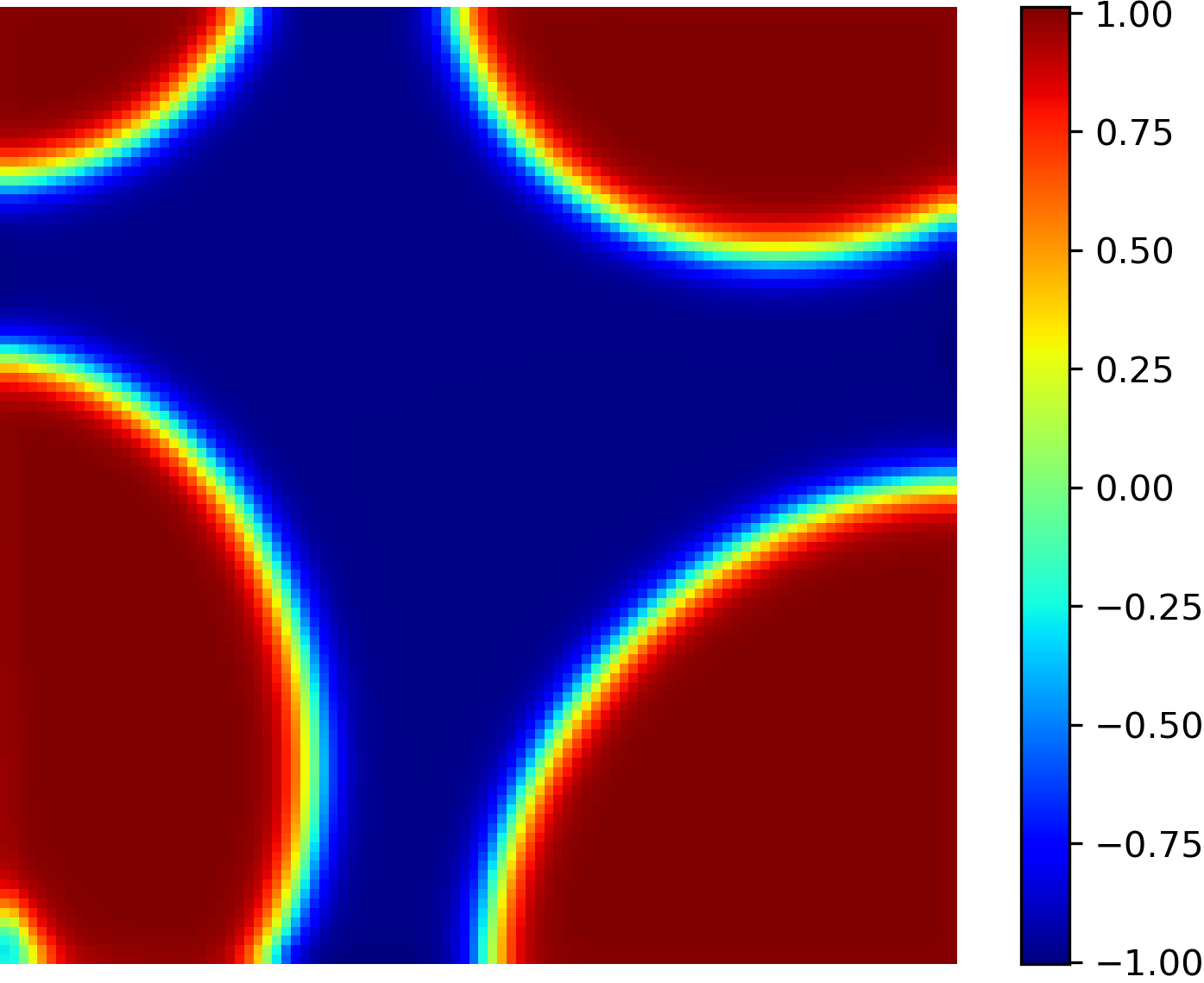}
    \end{subfigure}

    \begin{subfigure}[t]{0.3\textwidth}
        \includegraphics[scale=0.4]{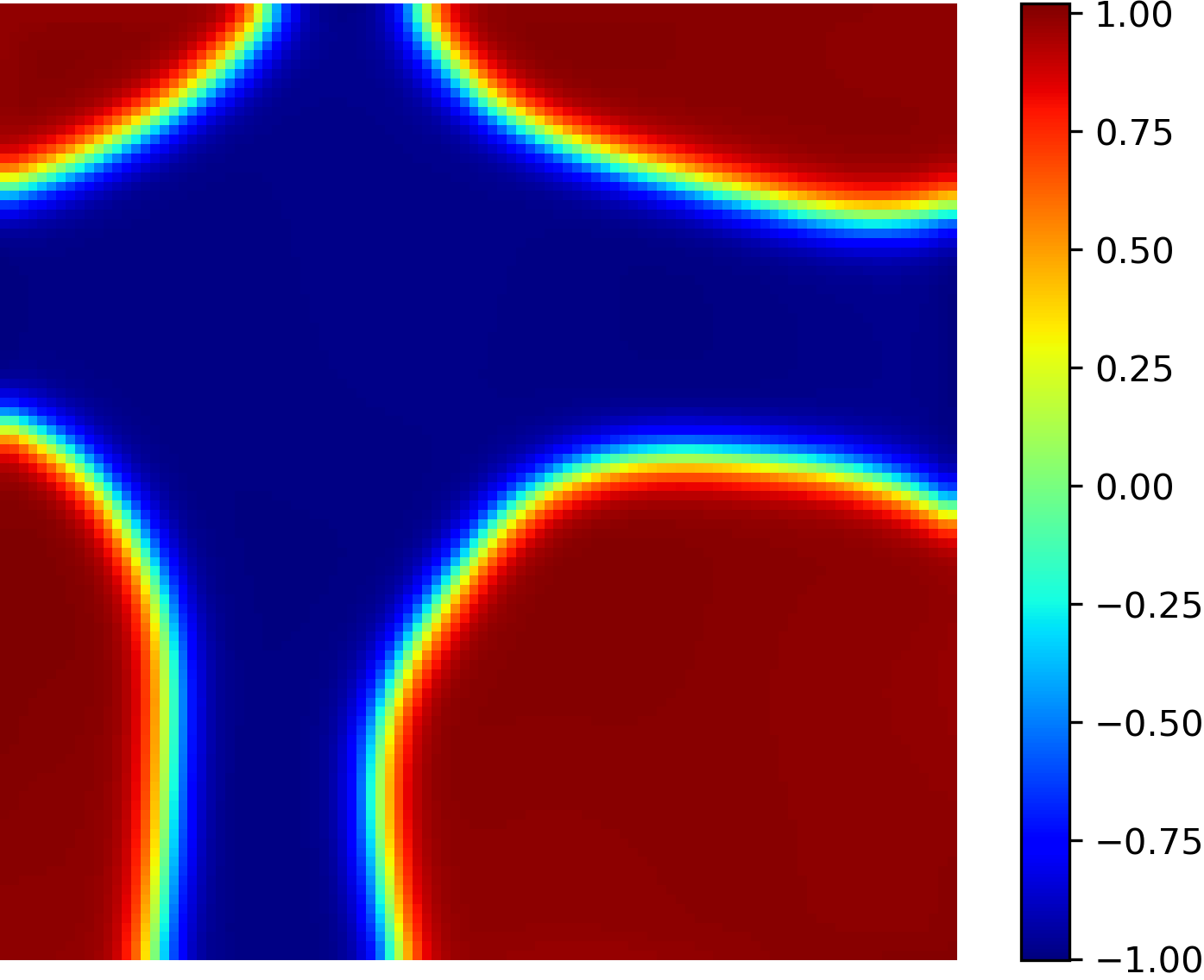}
    \end{subfigure}
    \hfill
    \begin{subfigure}[t]{0.3\textwidth}
        \includegraphics[scale=0.4]{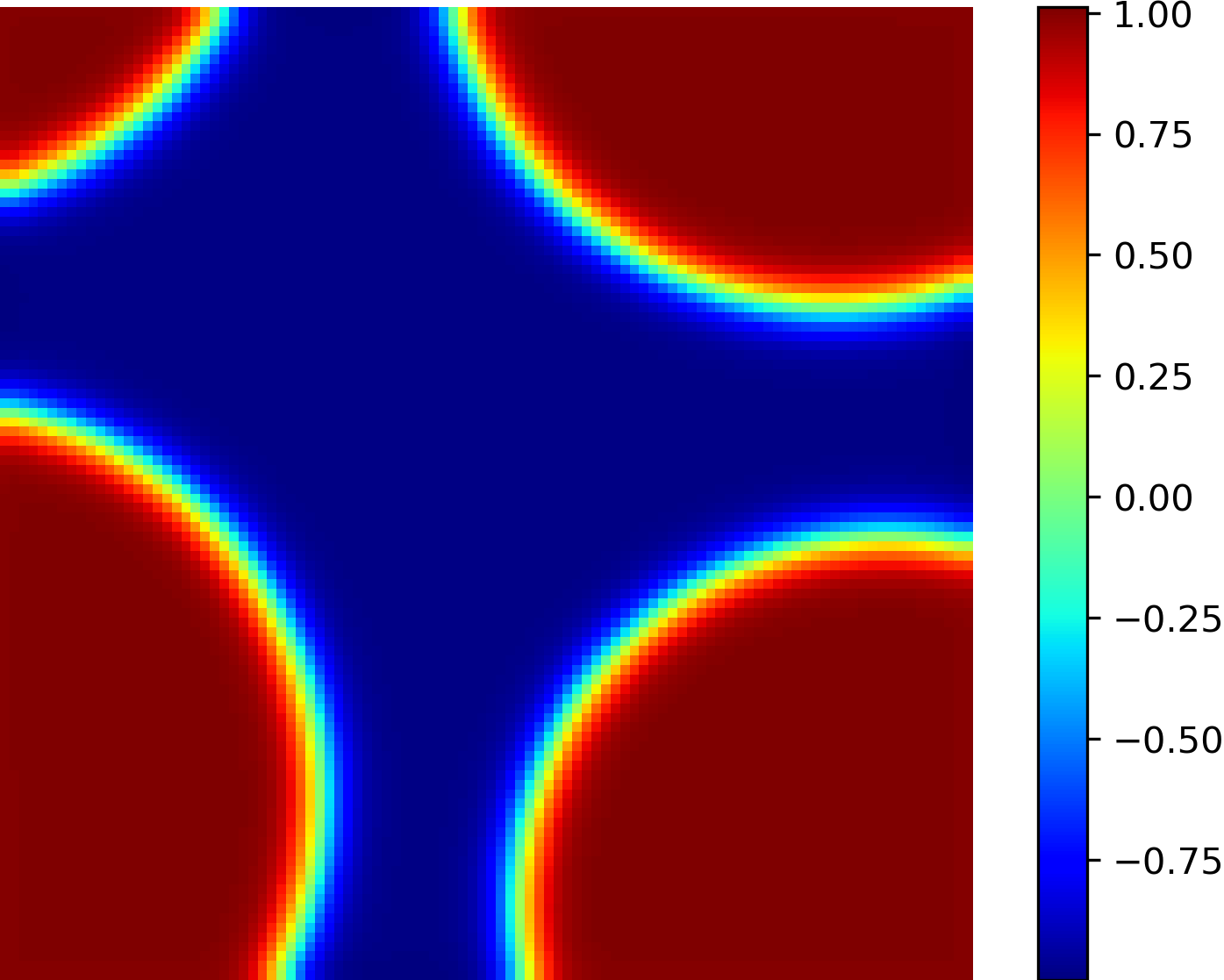}
    \end{subfigure}
    \hfill
    \begin{subfigure}[t]{0.3\textwidth}
        \includegraphics[scale=0.4]{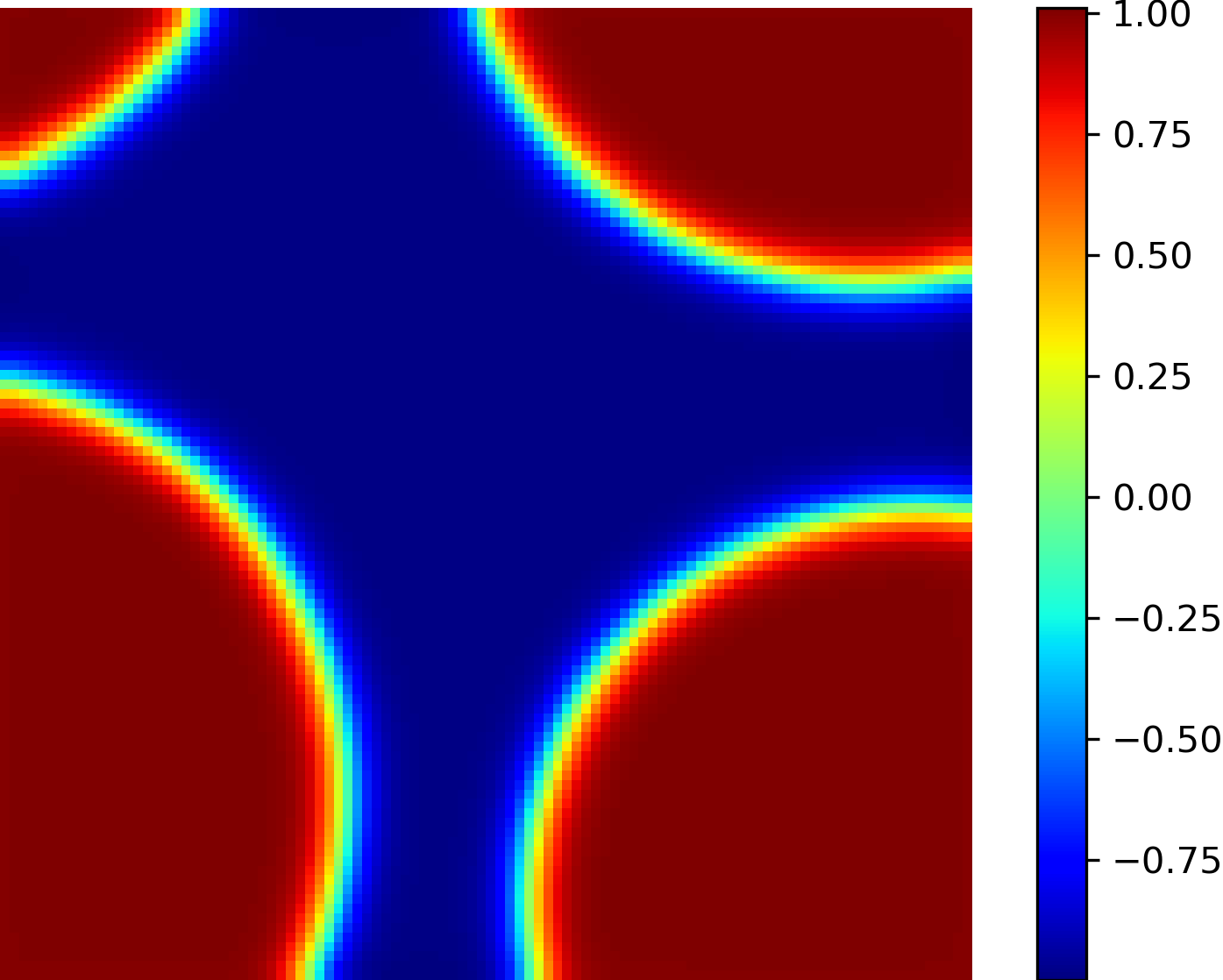}
    \end{subfigure}

    \caption{Case 2: Snapshots of the numerical approximation are taken at $T= 0.015$, $0.045$, $0.3$, $0.5$, and $2.0$ with different $\beta$. Left: $\beta=1$; Middle: $\beta=0.1$; Right: $\beta=0$.}
    \label{5.7}
\end{figure}

\textbf{Case 3:} In Figure \ref{5.8}, the initial value is specified as follows,
\begin{equation}
    \phi_0(x,y) = sin(2\pi x)cos(2\pi y),\quad (x,y)\in \Omega\cup\Gamma.
\end{equation}
By reducing the value of $\beta$, we find that the system is reaching the steady state fast in Figure \ref{5.10}. Meanwhile the Figure \ref{5.9} indicates that the discrete energy is declining slowly by enlarging the value of $\beta$. The conservation of mass in the bulk and on the boundary is consistently maintained with different values of $\beta$.

\begin{figure}[!htbp]
    \centering
    \includegraphics[width=0.4\textwidth]{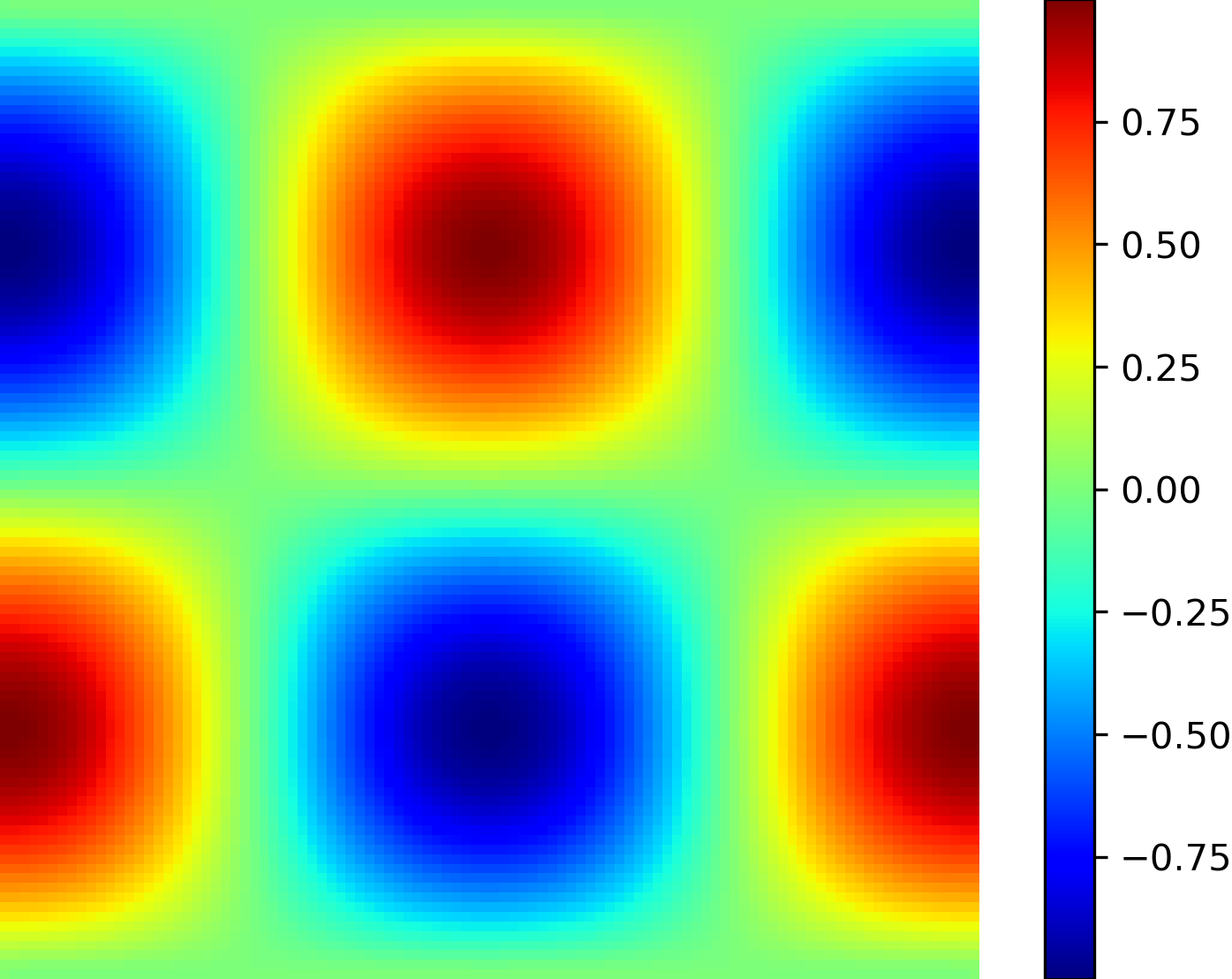}
    \caption{The initial data of Case 3.}

    \label{5.8}

\end{figure}

\begin{figure}[!htbp]
    \centering
    \scalebox{0.9}{
        \begin{minipage}{\textwidth}
            \begin{subfigure}[b]{0.49\textwidth}
                \includegraphics[width=\textwidth]{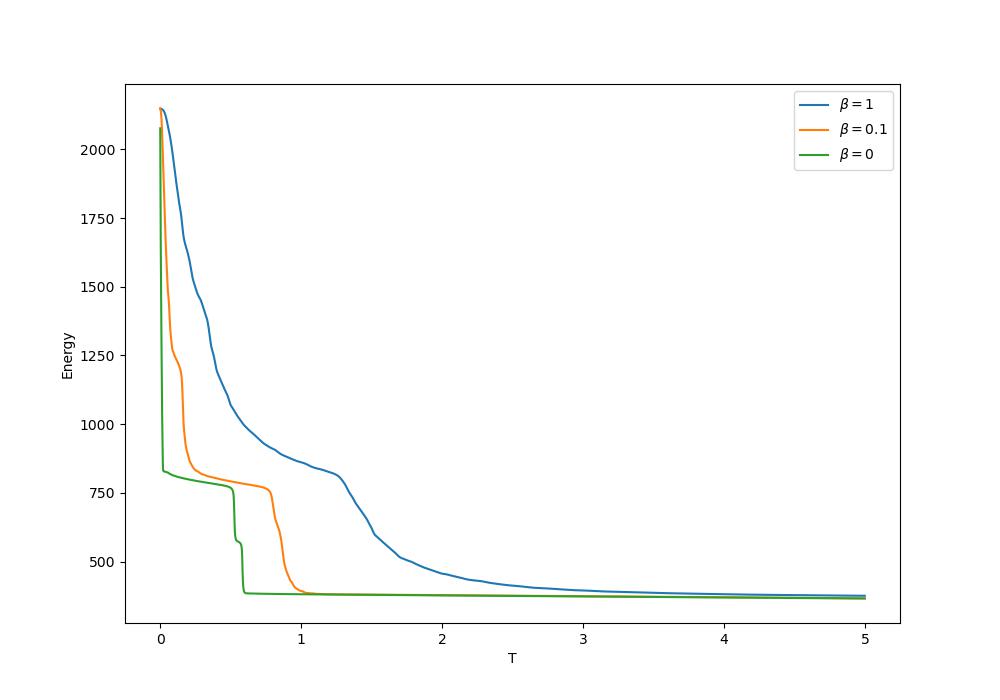}
                \caption{Energy curves with different $\beta$.}
            \end{subfigure}
            \hfill
            \begin{subfigure}[b]{0.49\textwidth}
                \includegraphics[width=\textwidth]{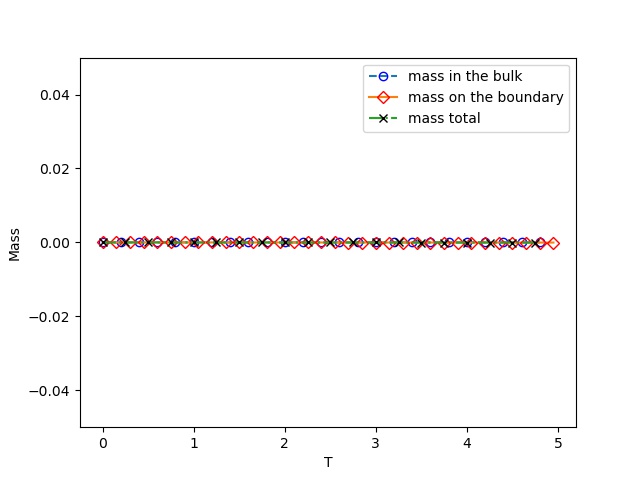}
                \caption{The mass with $\beta = 1$.}
            \end{subfigure}
            \hfill
            \begin{subfigure}[b]{0.49\textwidth}
                \includegraphics[width=\textwidth]{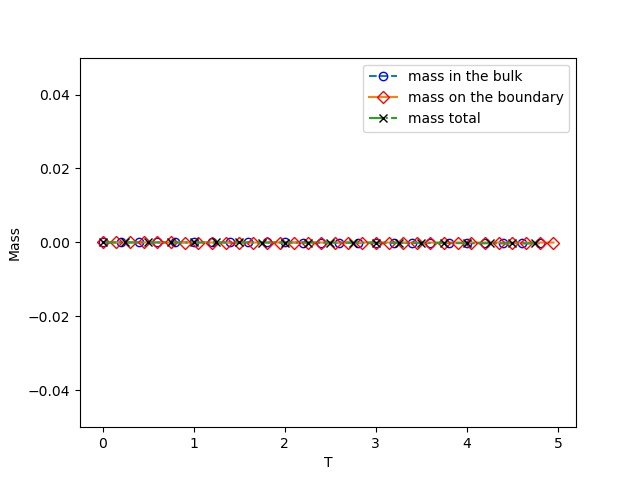}
                \caption{The mass with $\beta = 0.1$.}
            \end{subfigure}
            \hfill
            \begin{subfigure}[b]{0.49\textwidth}
                \includegraphics[width=\textwidth]{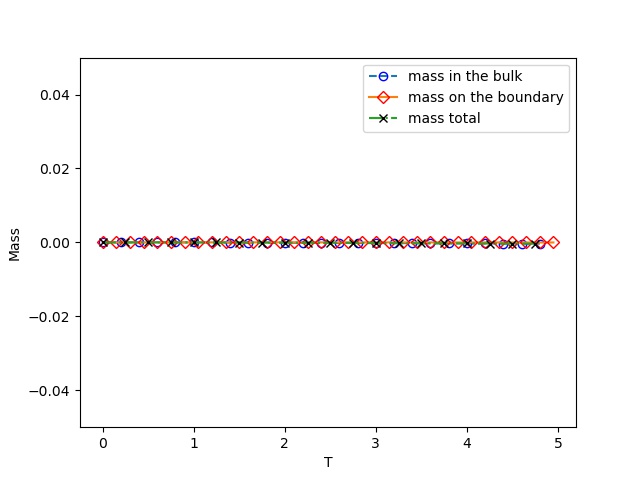}
                \caption{The mass with $\beta = 0$.}
            \end{subfigure}
        \end{minipage}
    }
    \caption{The energy evolution and the mass evolutions of Case 3.}

    \label{5.9}

\end{figure}

\begin{figure}[!htbp]
    \centering
    \begin{subfigure}[t]{0.3\textwidth}
        \includegraphics[scale=0.4]{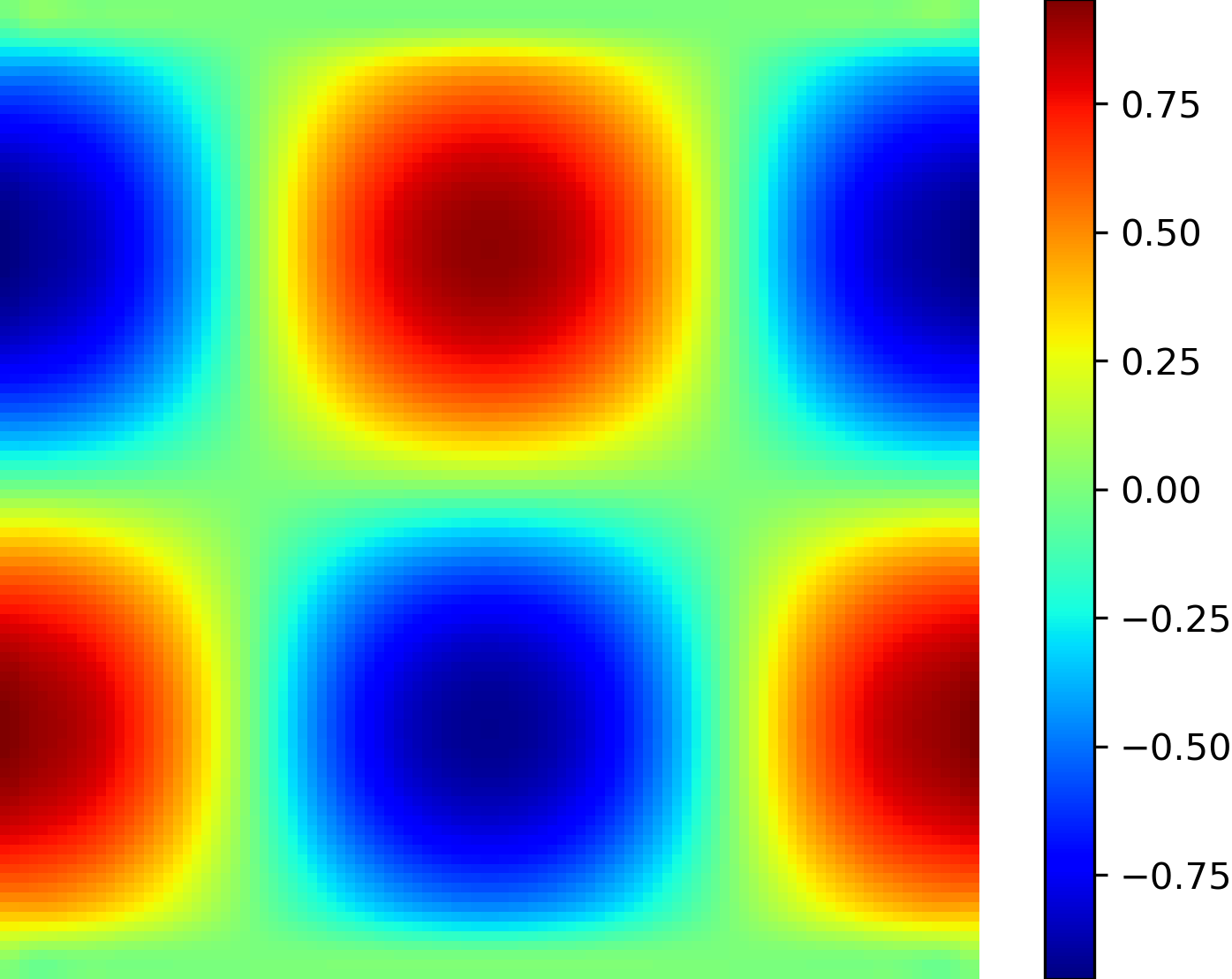}
    \end{subfigure}
    \hfill
    \begin{subfigure}[t]{0.3\textwidth}
        \includegraphics[scale=0.4]{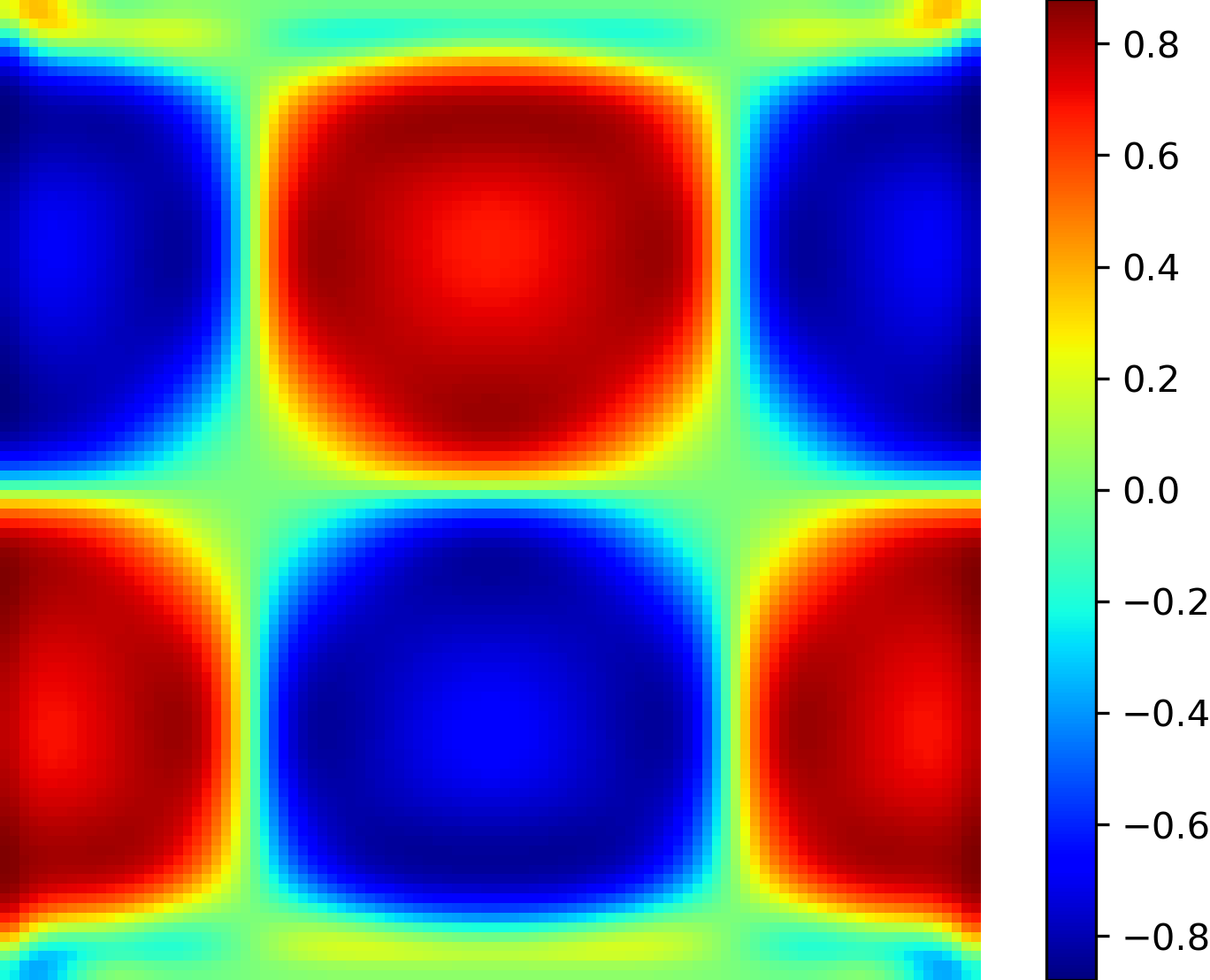}
    \end{subfigure}
    \hfill
    \begin{subfigure}[t]{0.3\textwidth}
        \includegraphics[scale=0.4]{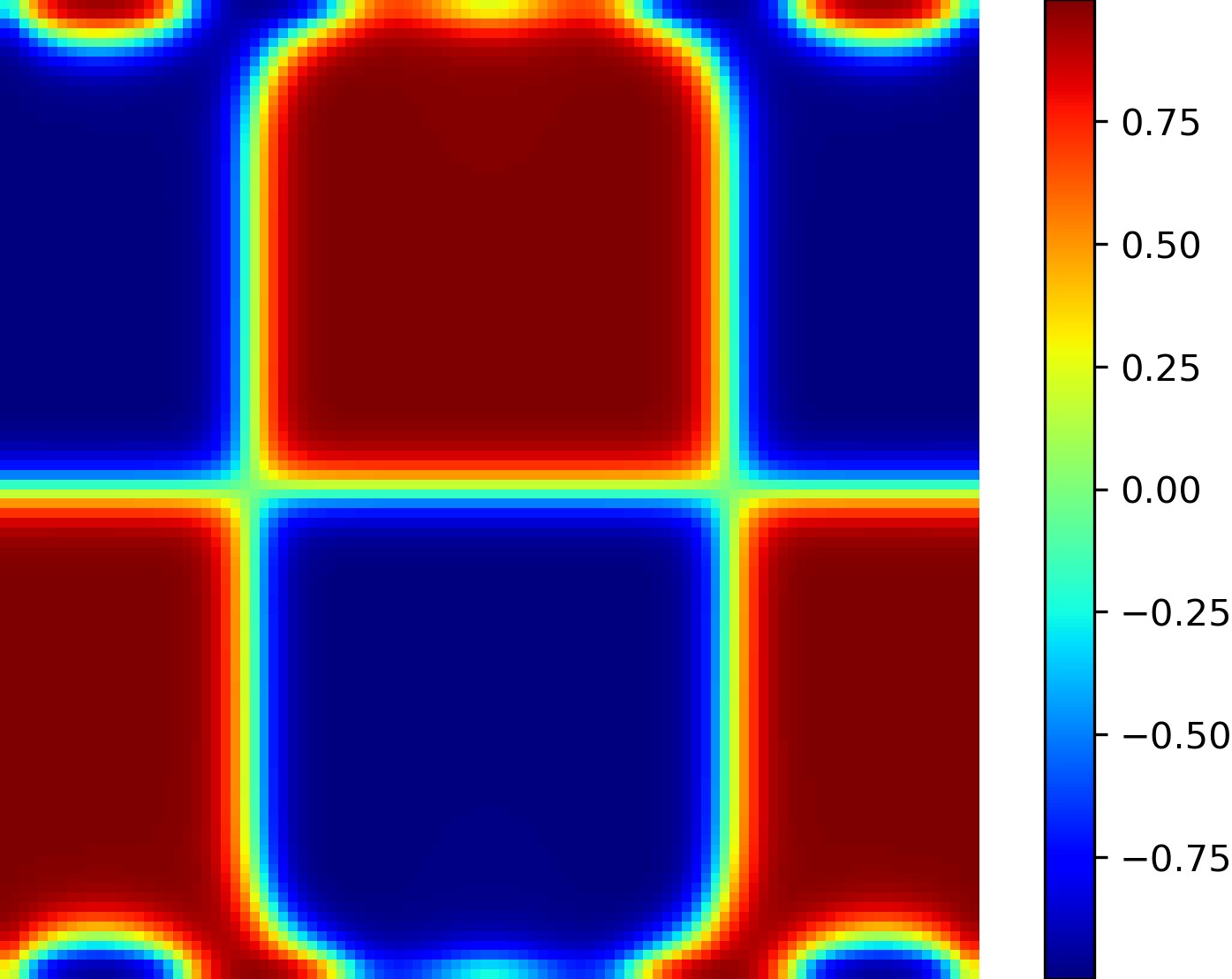}
    \end{subfigure}

    \begin{subfigure}[t]{0.3\textwidth}
        \includegraphics[scale=0.4]{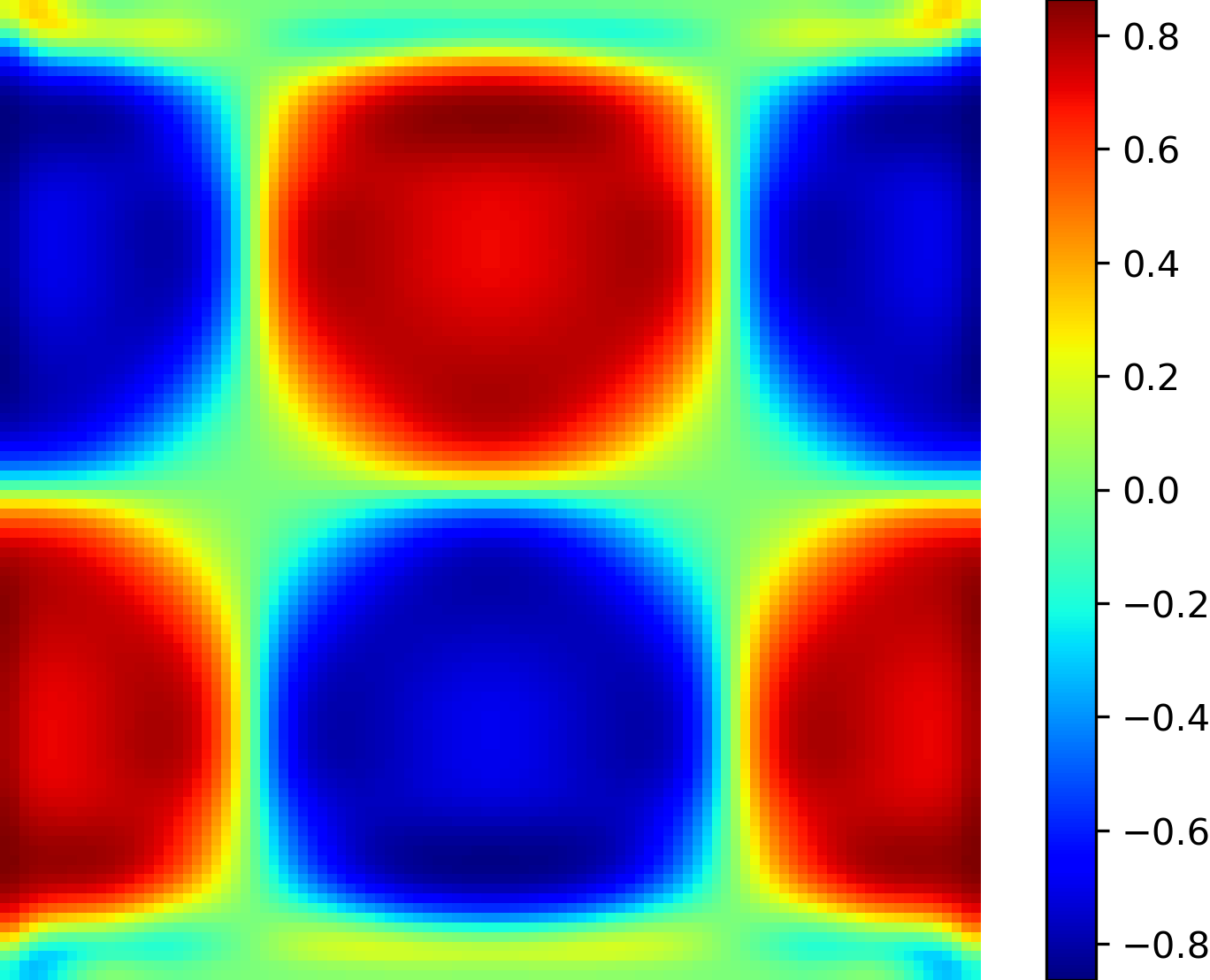}
    \end{subfigure}
    \hfill
    \begin{subfigure}[t]{0.3\textwidth}
        \includegraphics[scale=0.4]{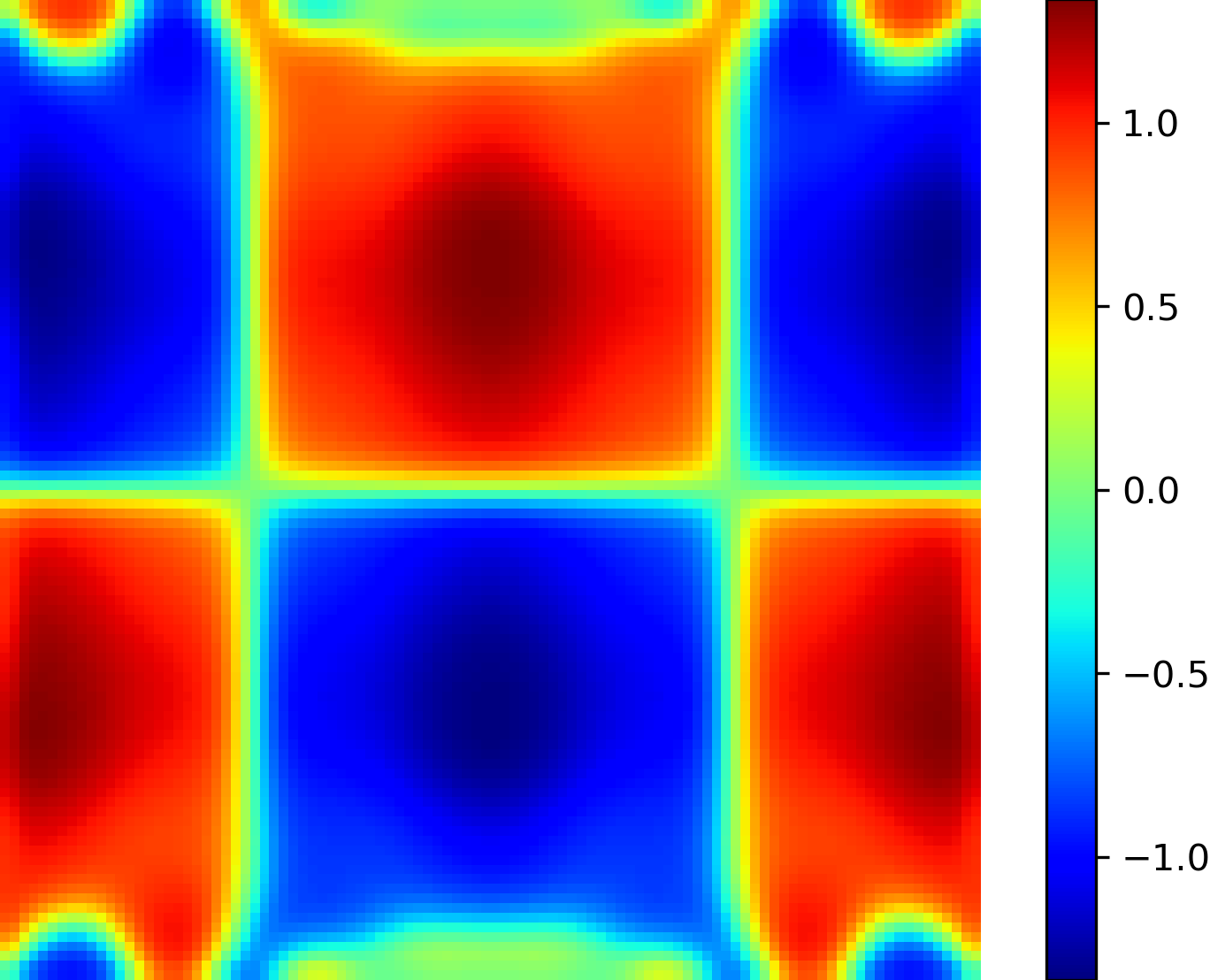}
    \end{subfigure}
    \hfill
    \begin{subfigure}[t]{0.3\textwidth}
        \includegraphics[scale=0.4]{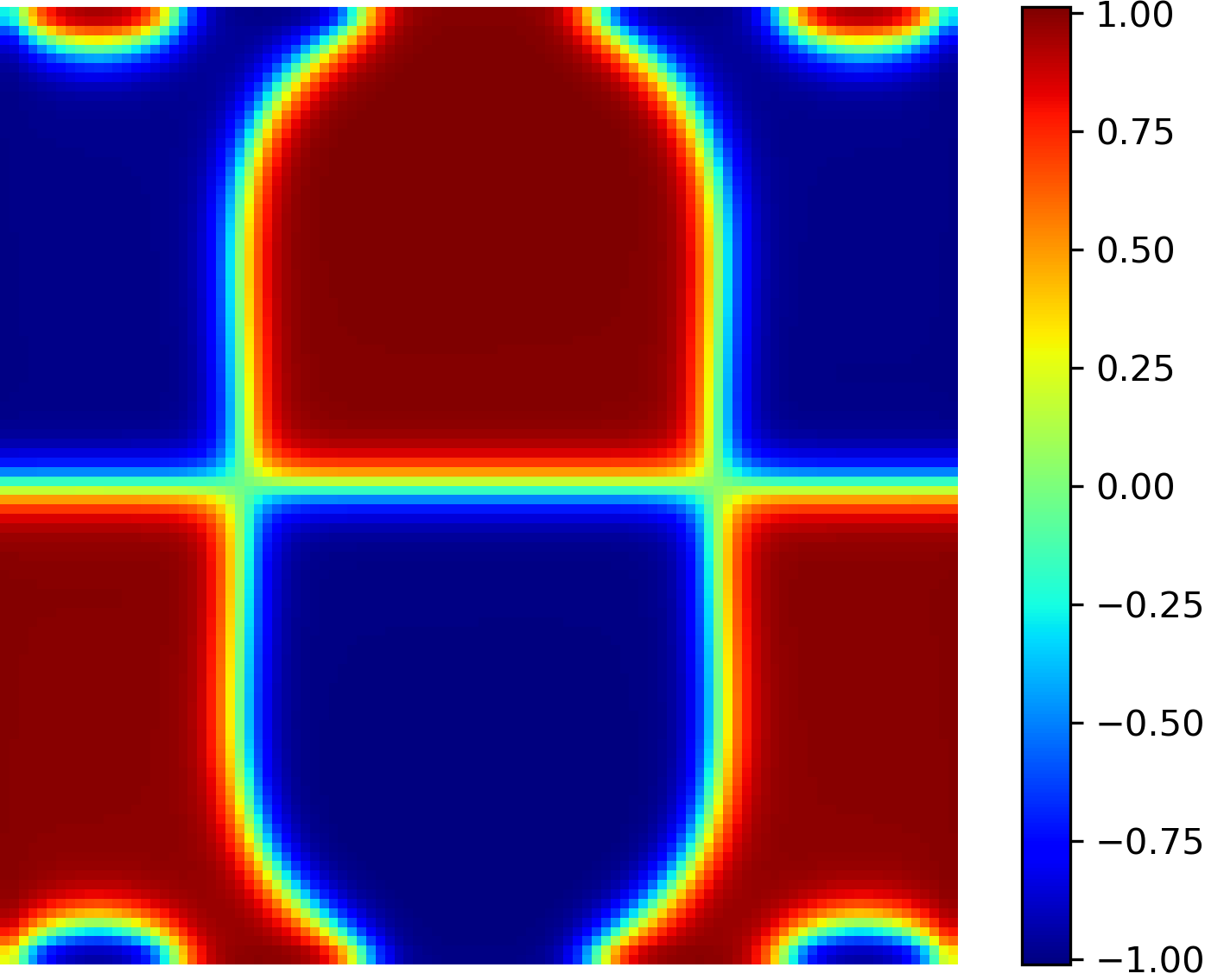}
    \end{subfigure}

    \begin{subfigure}[t]{0.3\textwidth}
        \includegraphics[scale=0.4]{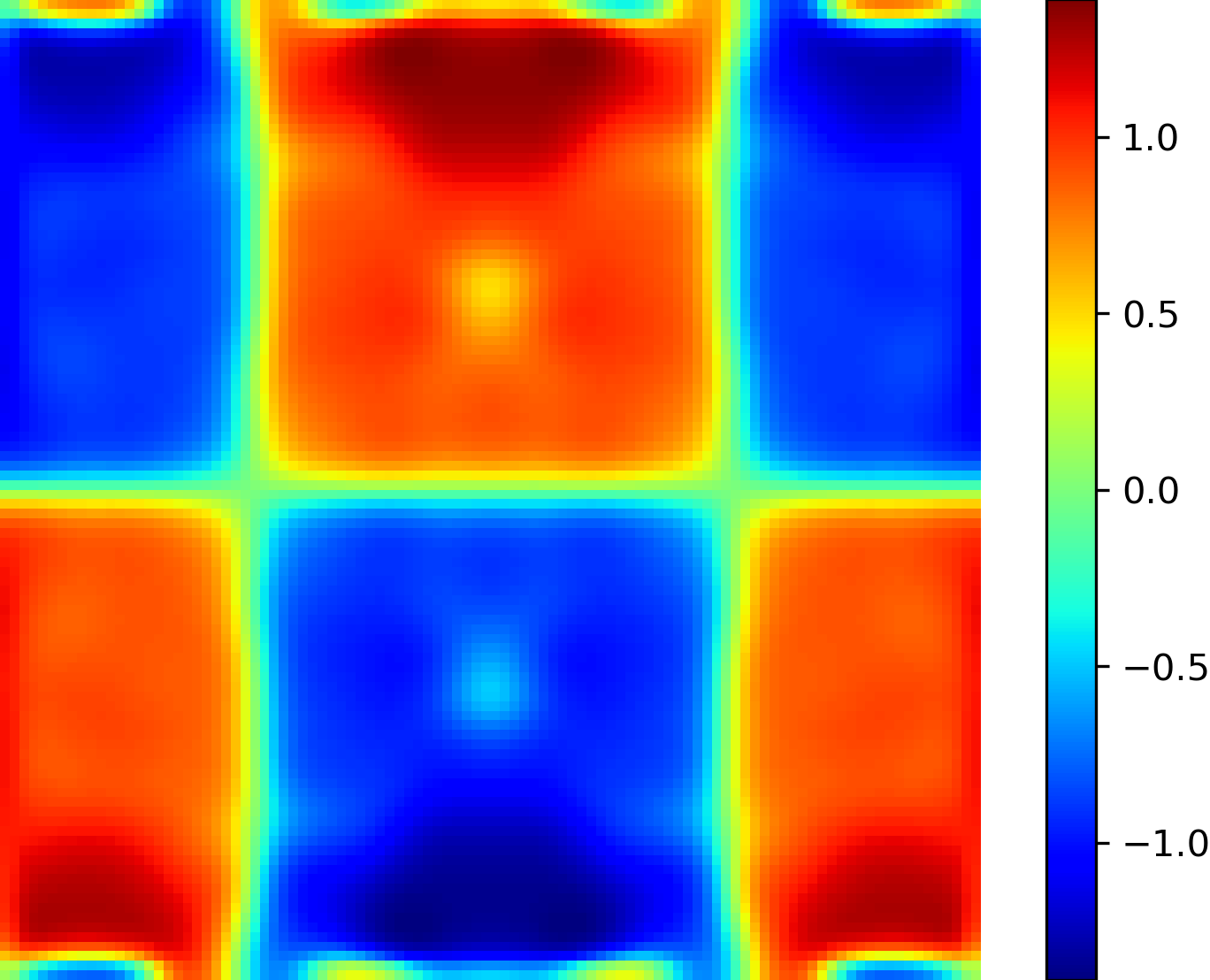}
    \end{subfigure}
    \hfill
    \begin{subfigure}[t]{0.3\textwidth}
        \includegraphics[scale=0.4]{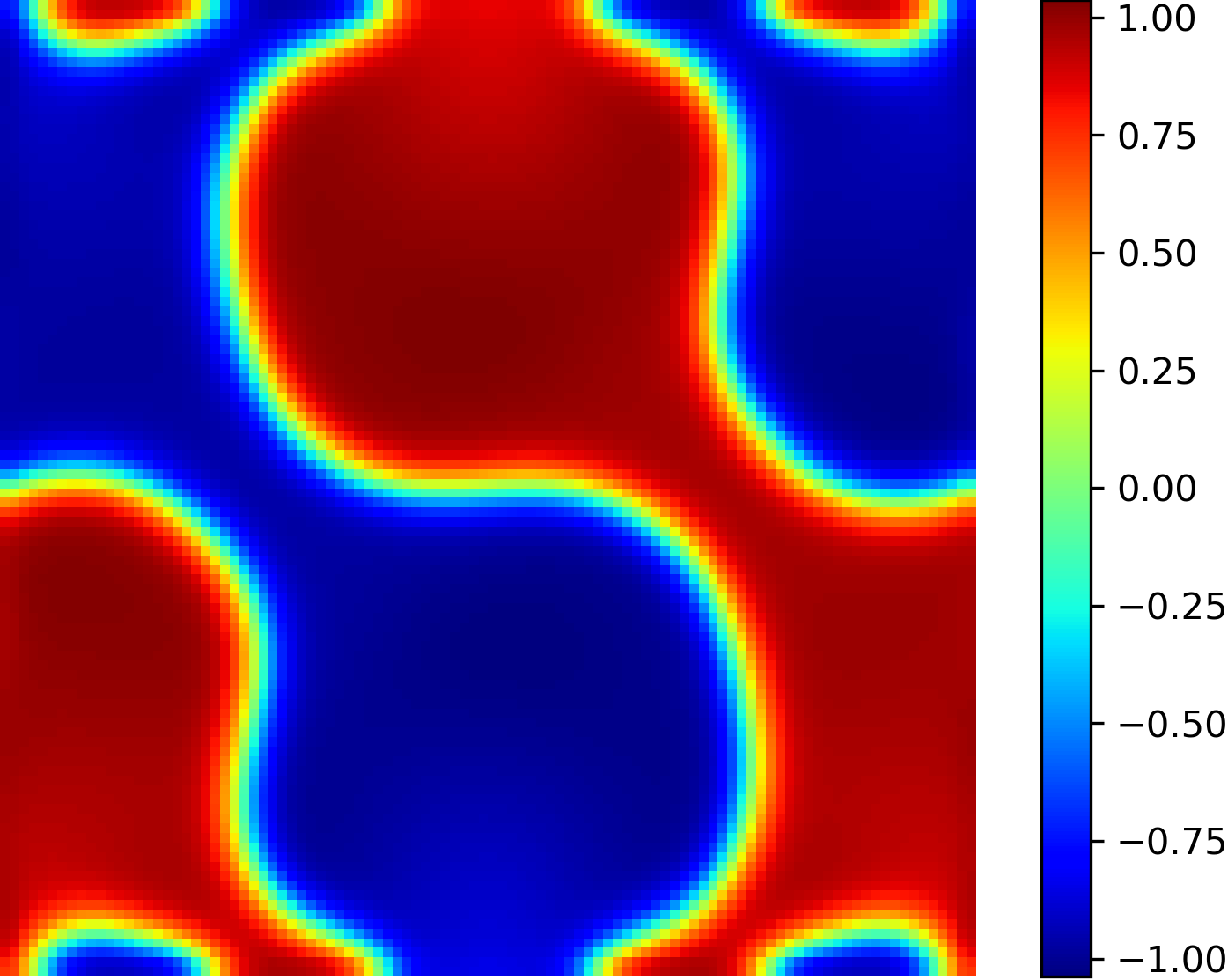}
    \end{subfigure}
    \hfill
    \begin{subfigure}[t]{0.3\textwidth}
        \includegraphics[scale=0.4]{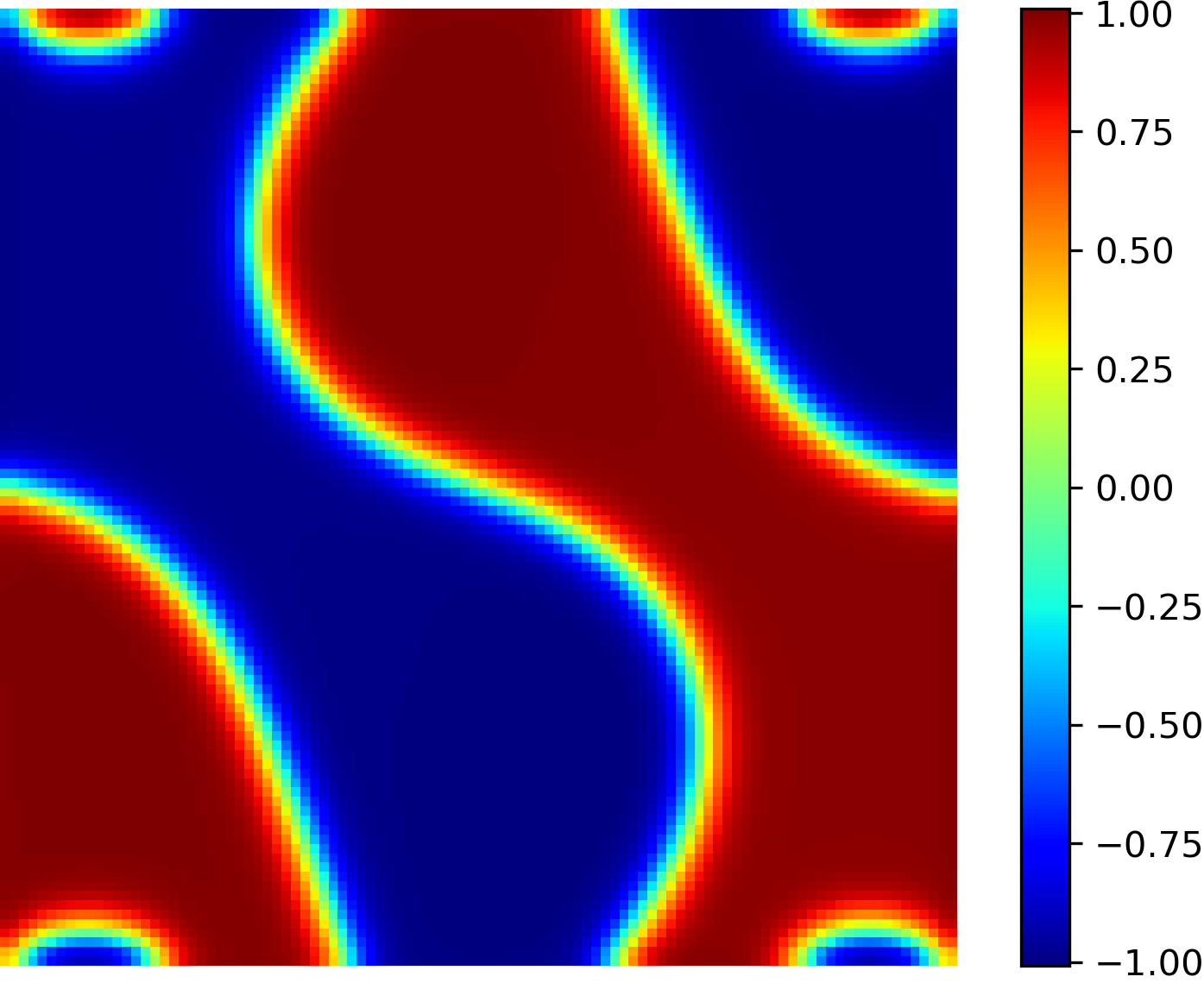}
    \end{subfigure}

    \begin{subfigure}[t]{0.3\textwidth}
        \includegraphics[scale=0.4]{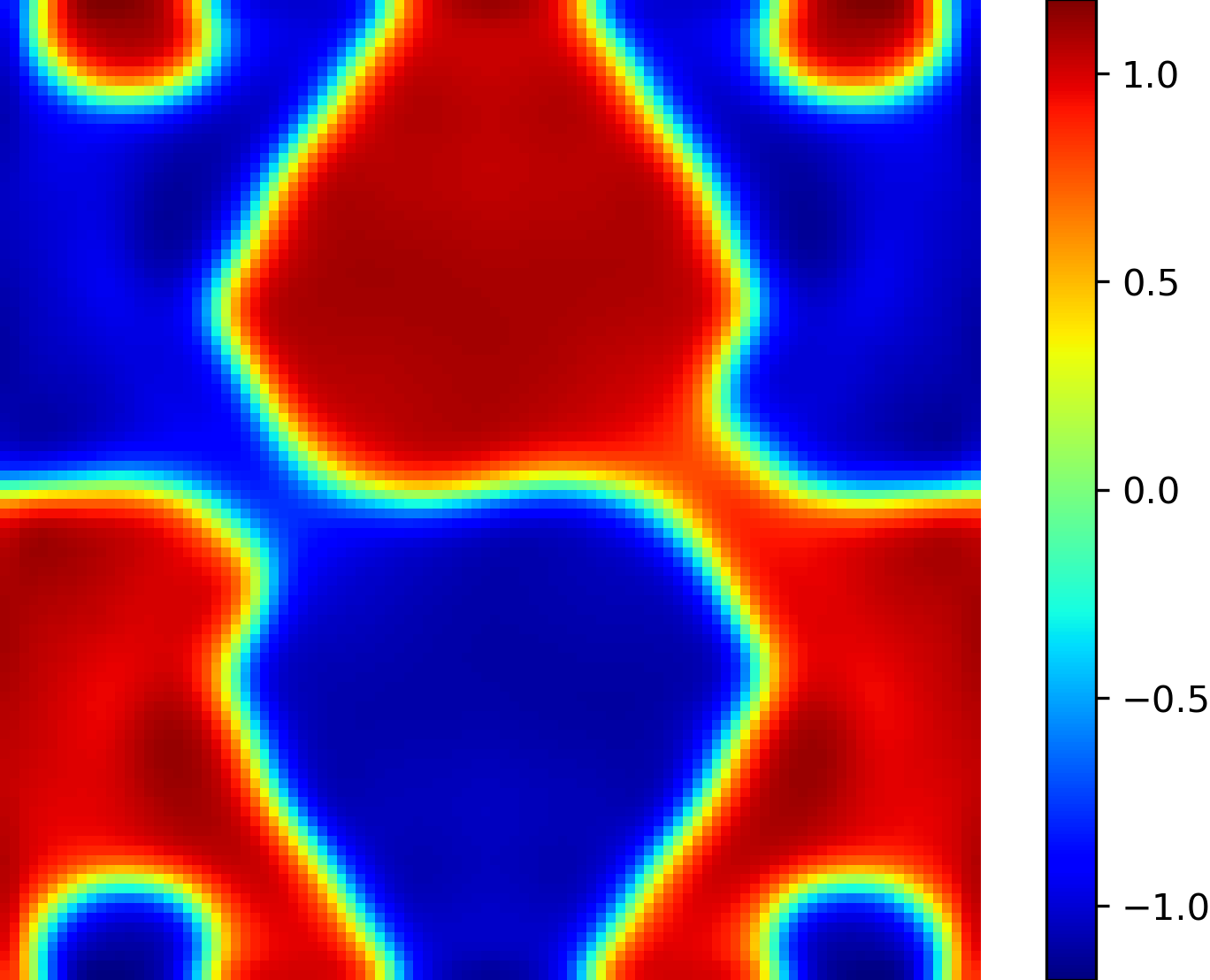}
    \end{subfigure}
    \hfill
    \begin{subfigure}[t]{0.3\textwidth}
        \includegraphics[scale=0.4]{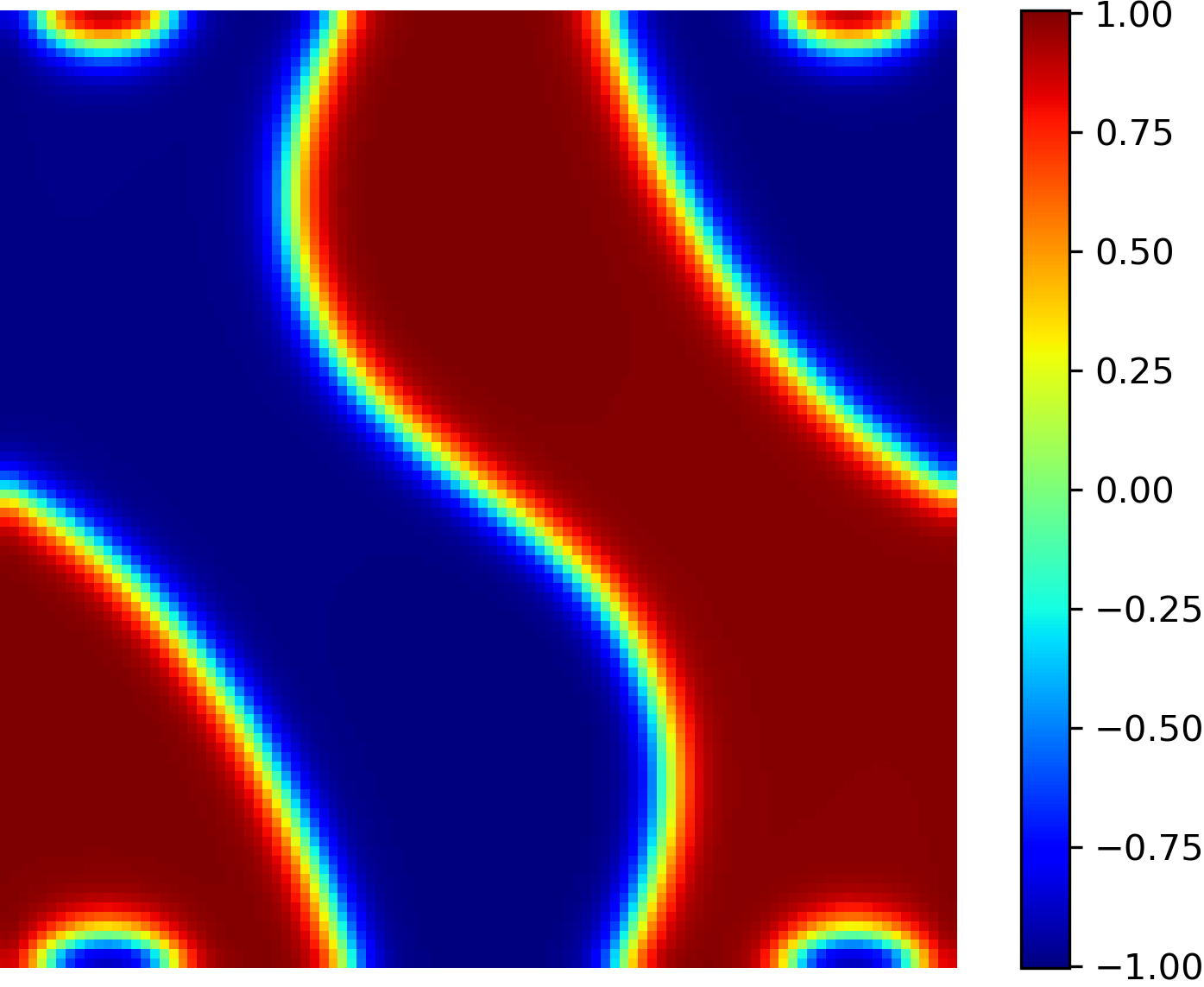}
    \end{subfigure}
    \hfill
    \begin{subfigure}[t]{0.3\textwidth}
        \includegraphics[scale=0.4]{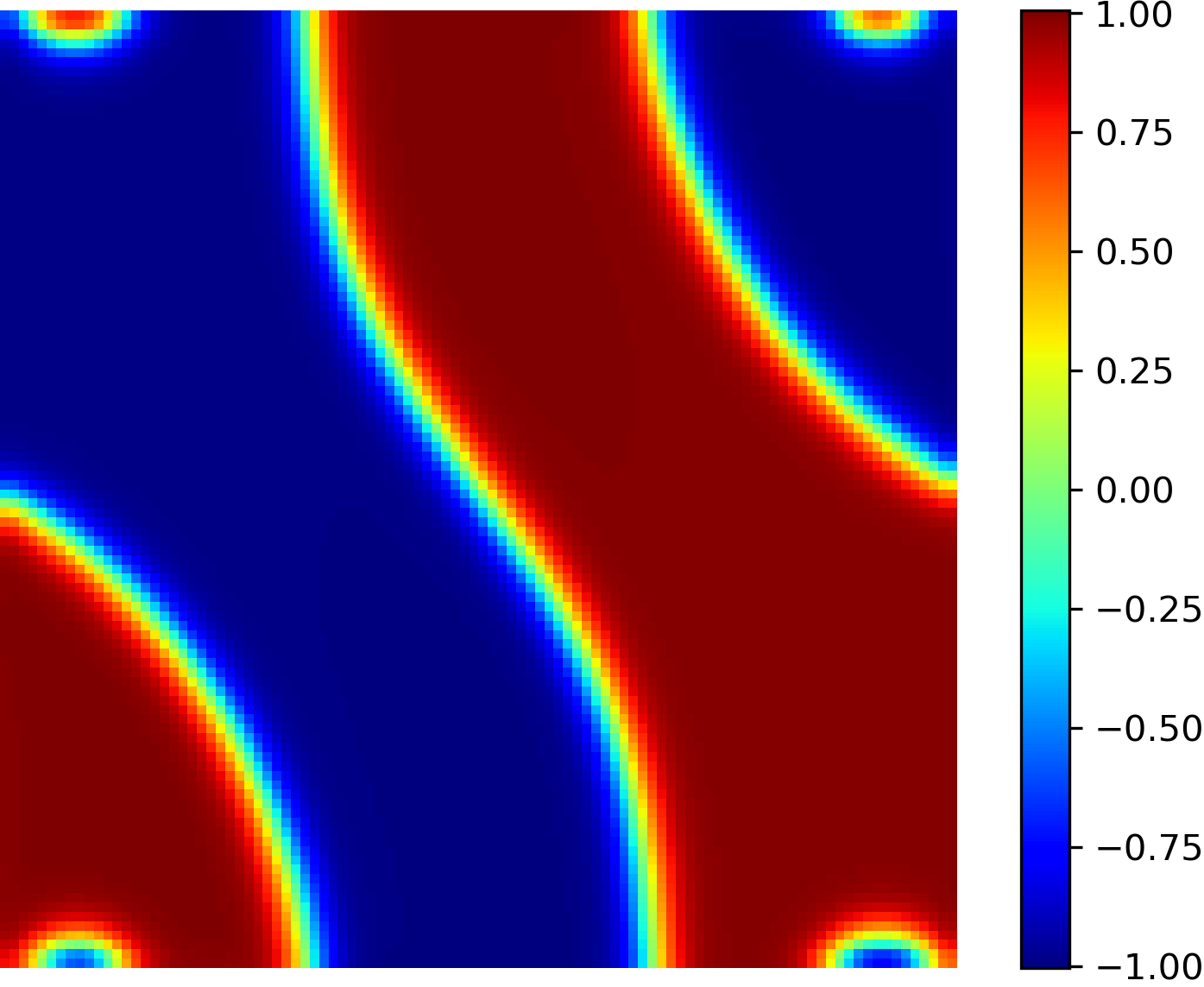}
    \end{subfigure}

    \begin{subfigure}[t]{0.3\textwidth}
        \includegraphics[scale=0.4]{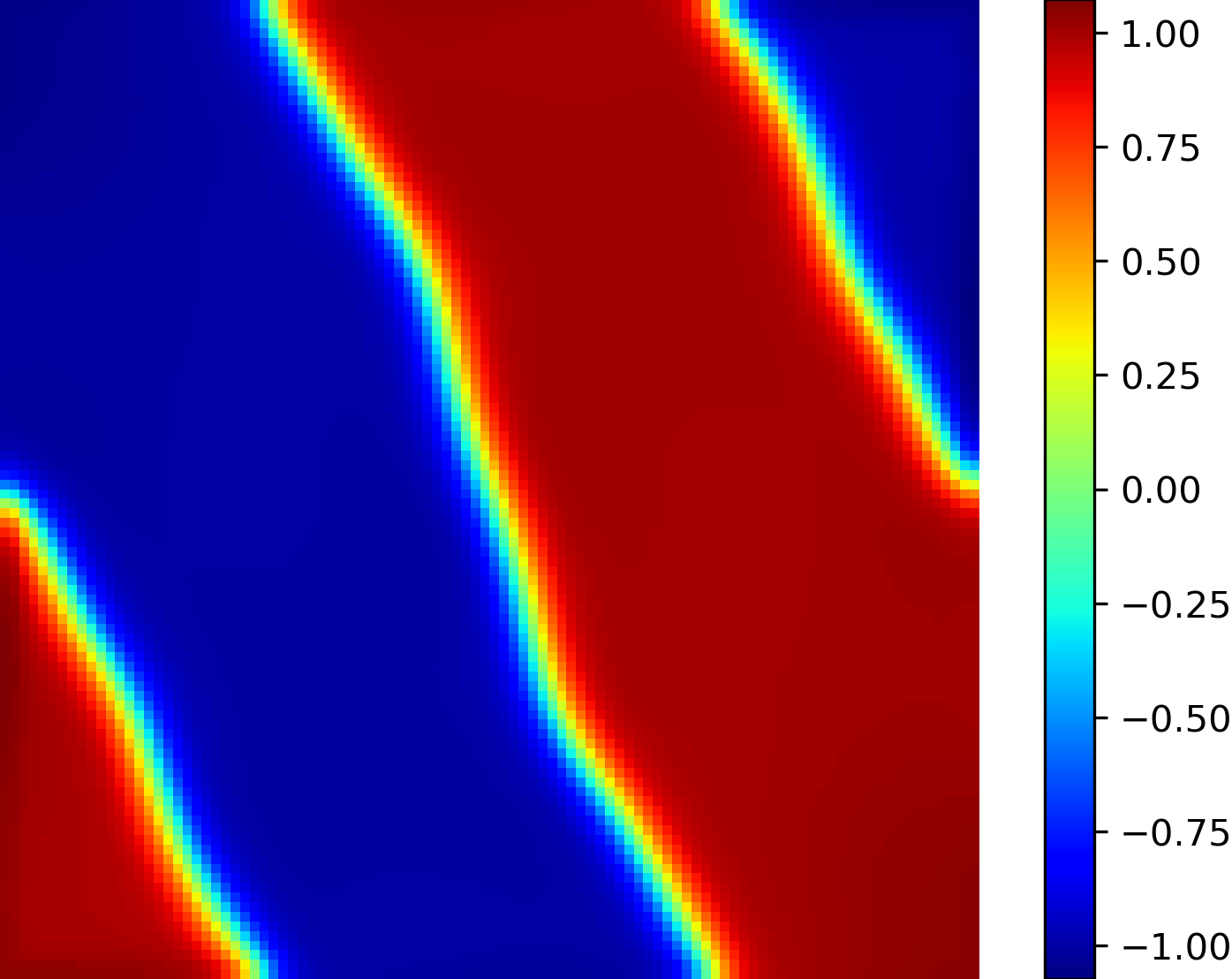}
    \end{subfigure}
    \hfill
    \begin{subfigure}[t]{0.3\textwidth}
        \includegraphics[scale=0.4]{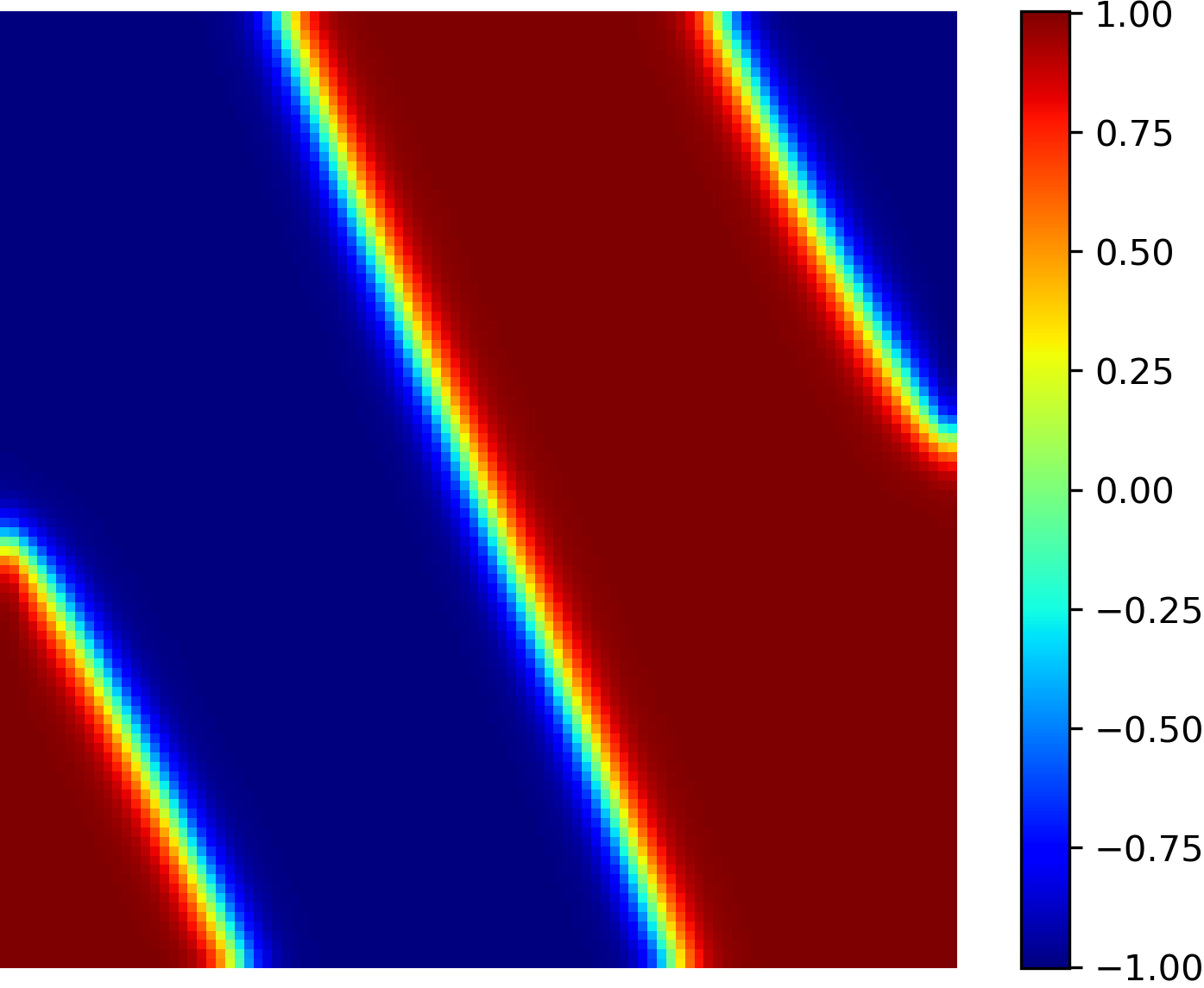}
    \end{subfigure}
    \hfill
    \begin{subfigure}[t]{0.3\textwidth}
        \includegraphics[scale=0.4]{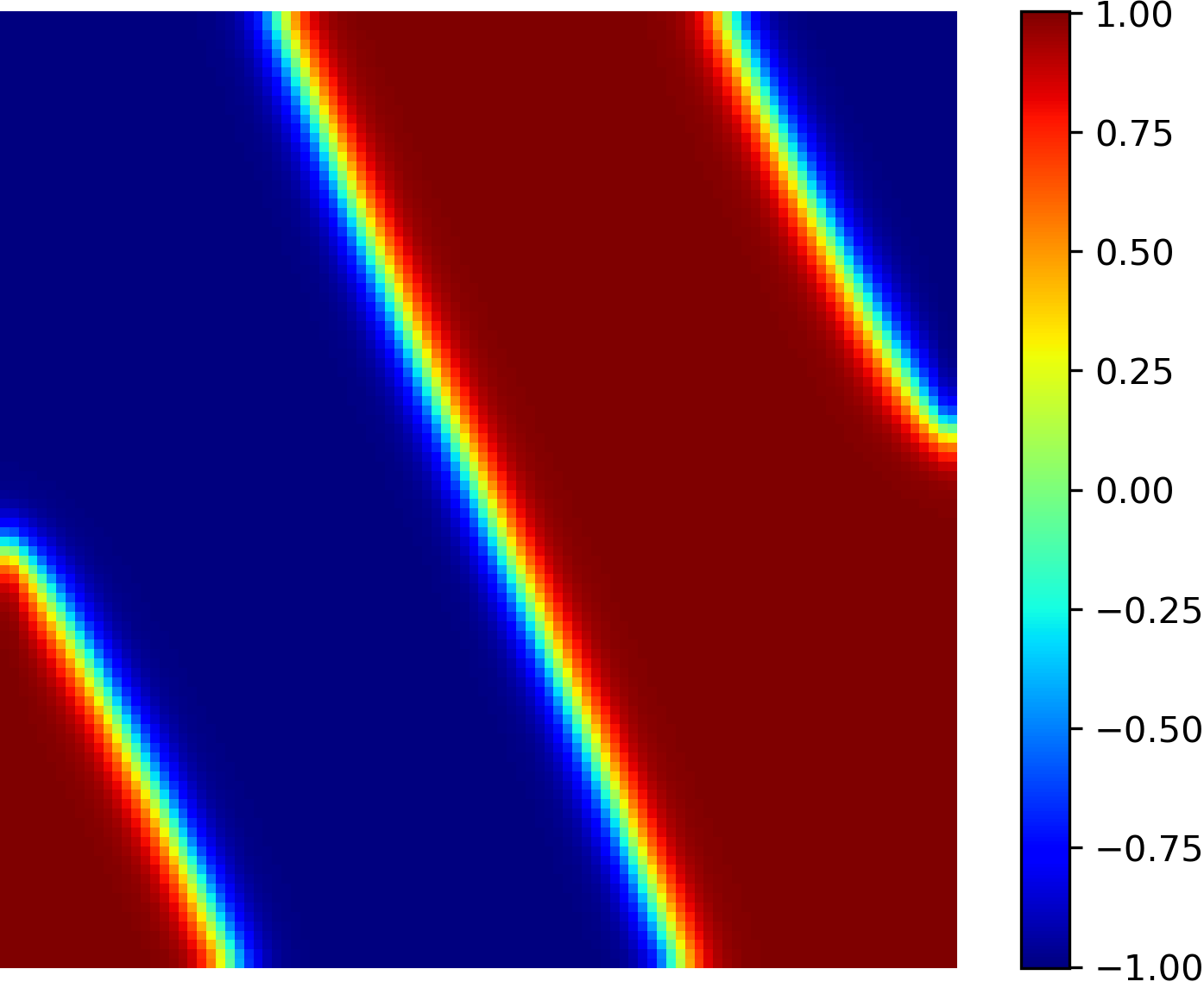}
    \end{subfigure}

    \caption{Case 3: Snapshots of the numerical approximation are taken at $T=0.015$, $0.04$, $0.20$, $0.5$, and $2.5$ with different $\beta$. Left: $\beta=1$; Middle: $\beta=0.1$; Right: $\beta=0$.}
    \label{5.10}
\end{figure}

\textbf{Case 4:} We consider a rectangle-shaped droplet $\Omega_0$,  as shown in Figure \ref{5.11}. The phase inside the droplet is set to be $1$ and outside the droplet to be $-1$,
\begin{equation}
    \phi_0(x,y) = \left\{
        \begin{array}{ll}
        1, & (x,y)\in \Omega_0= [0.3,0.7]\times[0,0.5] , \\[8pt]
        0, &  (x,y)\in\Omega\cup\Gamma\backslash\Omega_0. 
        \end{array}
        \right.
\end{equation}
In this test we also consider the effect of the parameter $\beta$ on the system. In Figure \ref{5.13}, we can see that the rectangle-shaped droplet is gradually transforming into a circular shape. Moreover we find that the droplet changes its shape more lowly when $\beta$ is larger. Meanwhile  Figure \ref{5.12} indicates that the discreate  energy  is decreasing more rapidly as the value of $\beta$ becomes smaller. We also observe that the mass conservation in the bulk and on the boundary is maintaining for this case.

\begin{figure}[!htbp]
    \centering
    \includegraphics[width=0.4\textwidth]{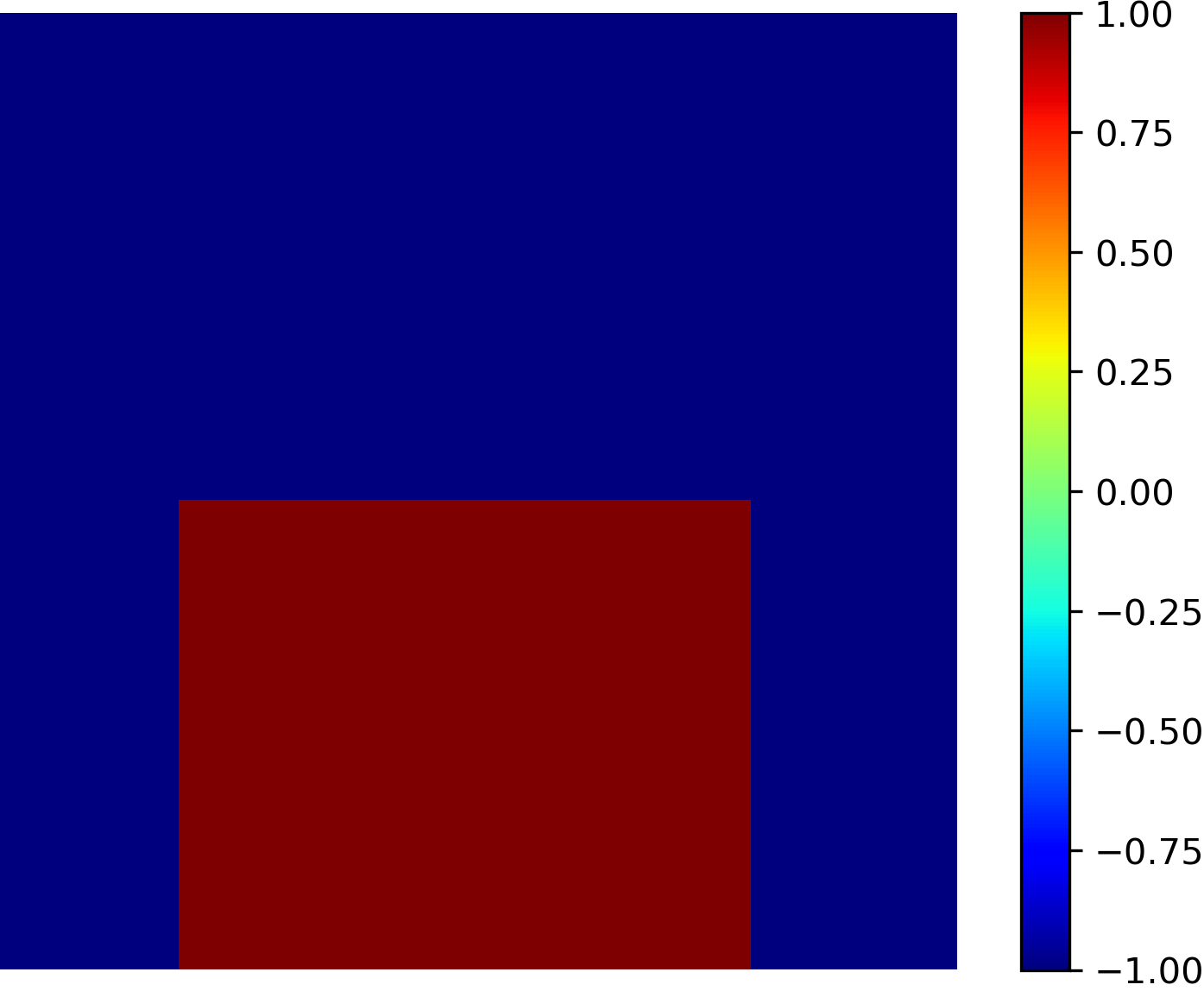}
    \caption{The initial data of Case 4.}

    \label{5.11}

\end{figure}

\begin{figure}[!htbp]
    \centering
    \scalebox{0.85}{
        \begin{minipage}{\textwidth}
            \begin{subfigure}[b]{0.49\textwidth}
                \includegraphics[width=\textwidth]{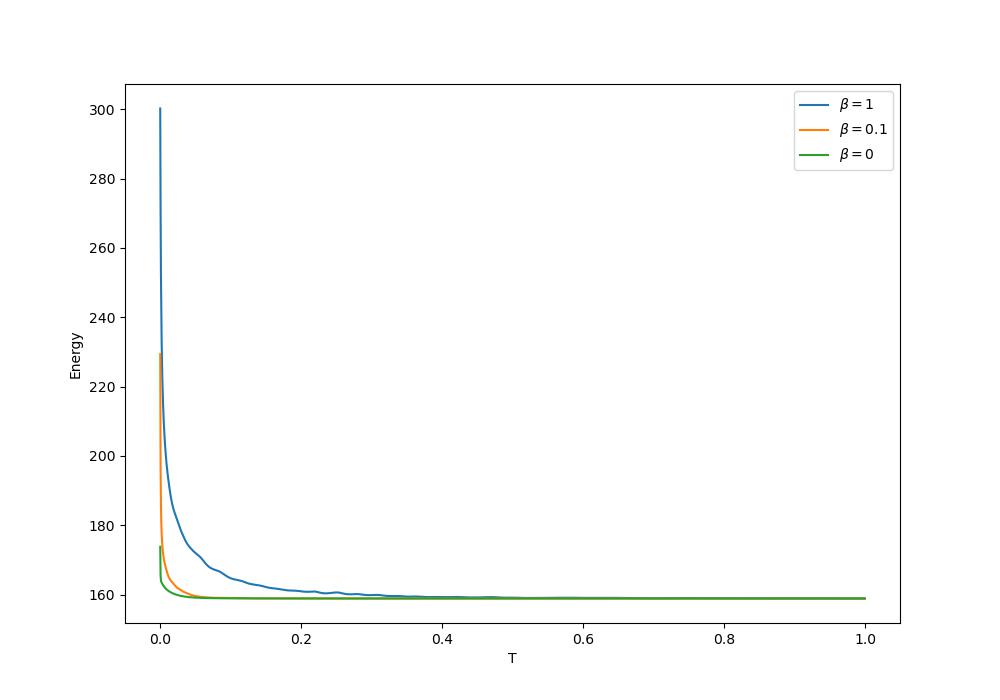}
                \caption{Energy curves with different $\beta$.}
            \end{subfigure}
            \hfill
            \begin{subfigure}[b]{0.49\textwidth}
                \includegraphics[width=\textwidth]{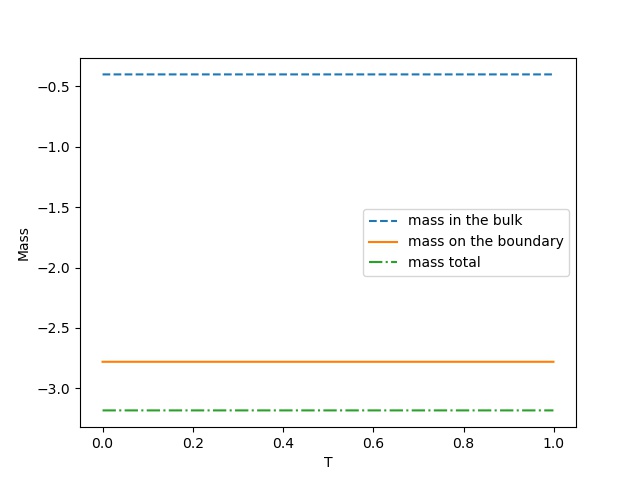}
                \caption{The mass with $\beta = 1$.}
            \end{subfigure}
            \hfill
            \begin{subfigure}[b]{0.49\textwidth}
                \includegraphics[width=\textwidth]{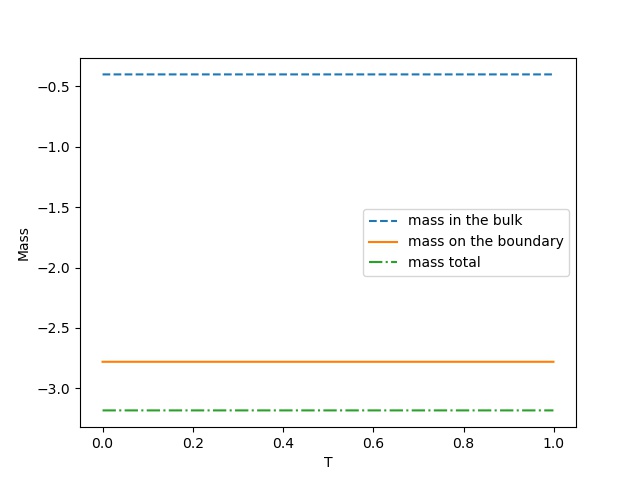}
                \caption{The mass with $\beta = 0.1$.}
            \end{subfigure}
            \hfill
            \begin{subfigure}[b]{0.49\textwidth}
                \includegraphics[width=\textwidth]{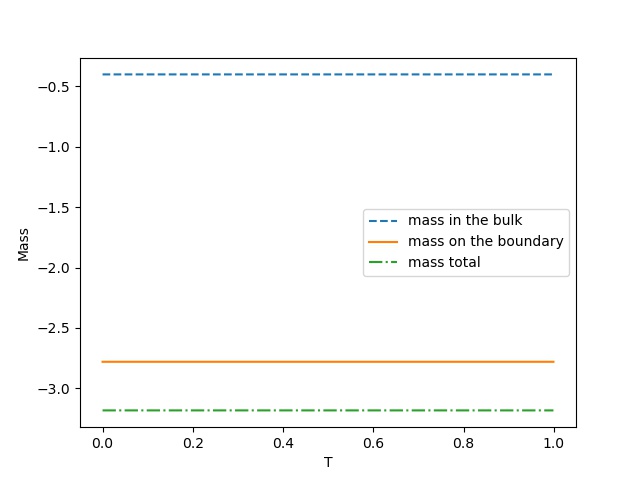}
                \caption{The mass with $\beta = 0$.}
            \end{subfigure}
        \end{minipage}
    }
    \caption{The energy evolution and the mass evolutions of Case 4.}

    \label{5.12}

\end{figure}

\begin{figure}[!htbp]
    \centering
    \begin{subfigure}[t]{0.3\textwidth}
        \includegraphics[scale=0.4]{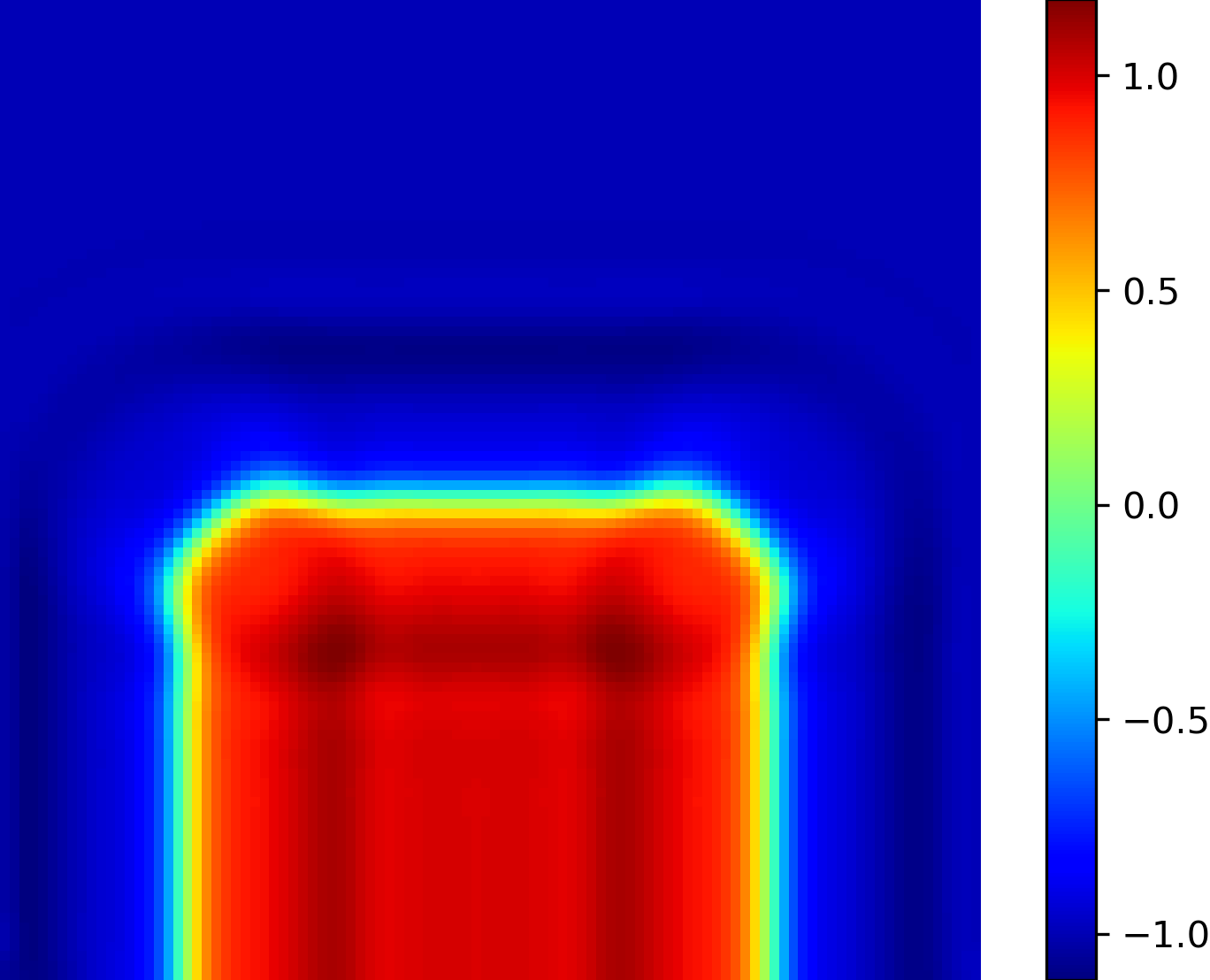}
    \end{subfigure}
    \hfill
    \begin{subfigure}[t]{0.3\textwidth}
        \includegraphics[scale=0.4]{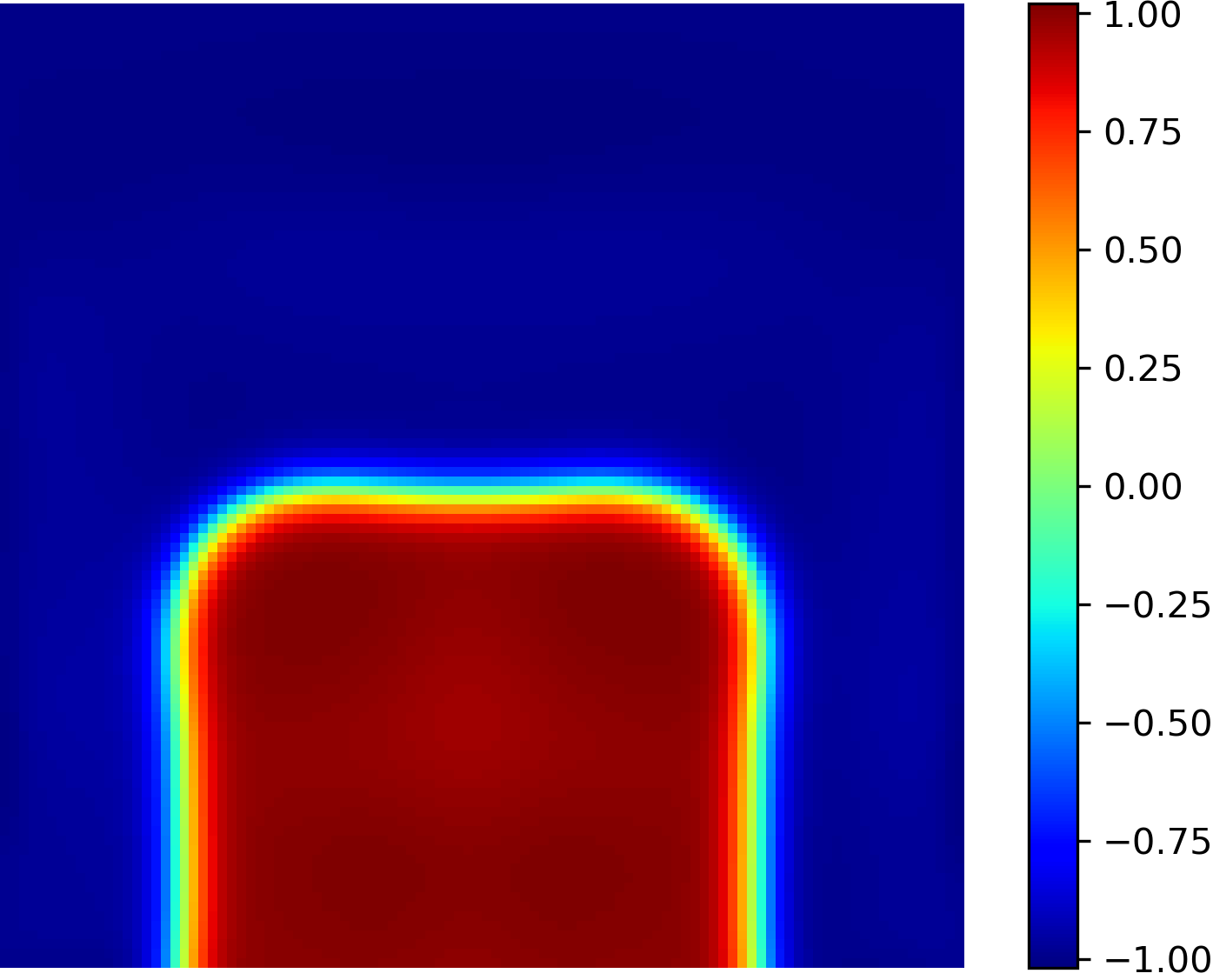}
    \end{subfigure}
    \hfill
    \begin{subfigure}[t]{0.3\textwidth}
        \includegraphics[scale=0.4]{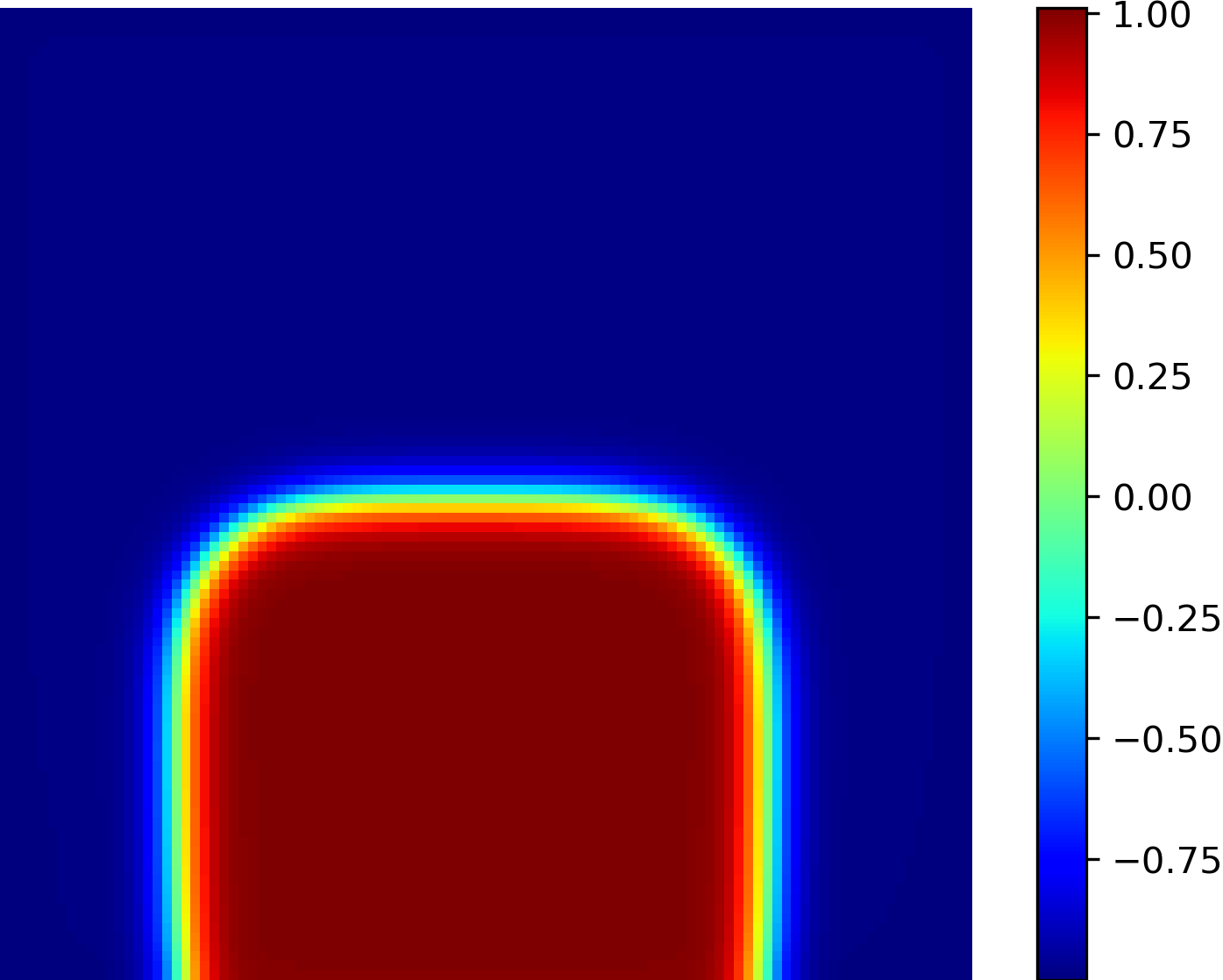}
    \end{subfigure}

    \begin{subfigure}[t]{0.3\textwidth}
        \includegraphics[scale=0.4]{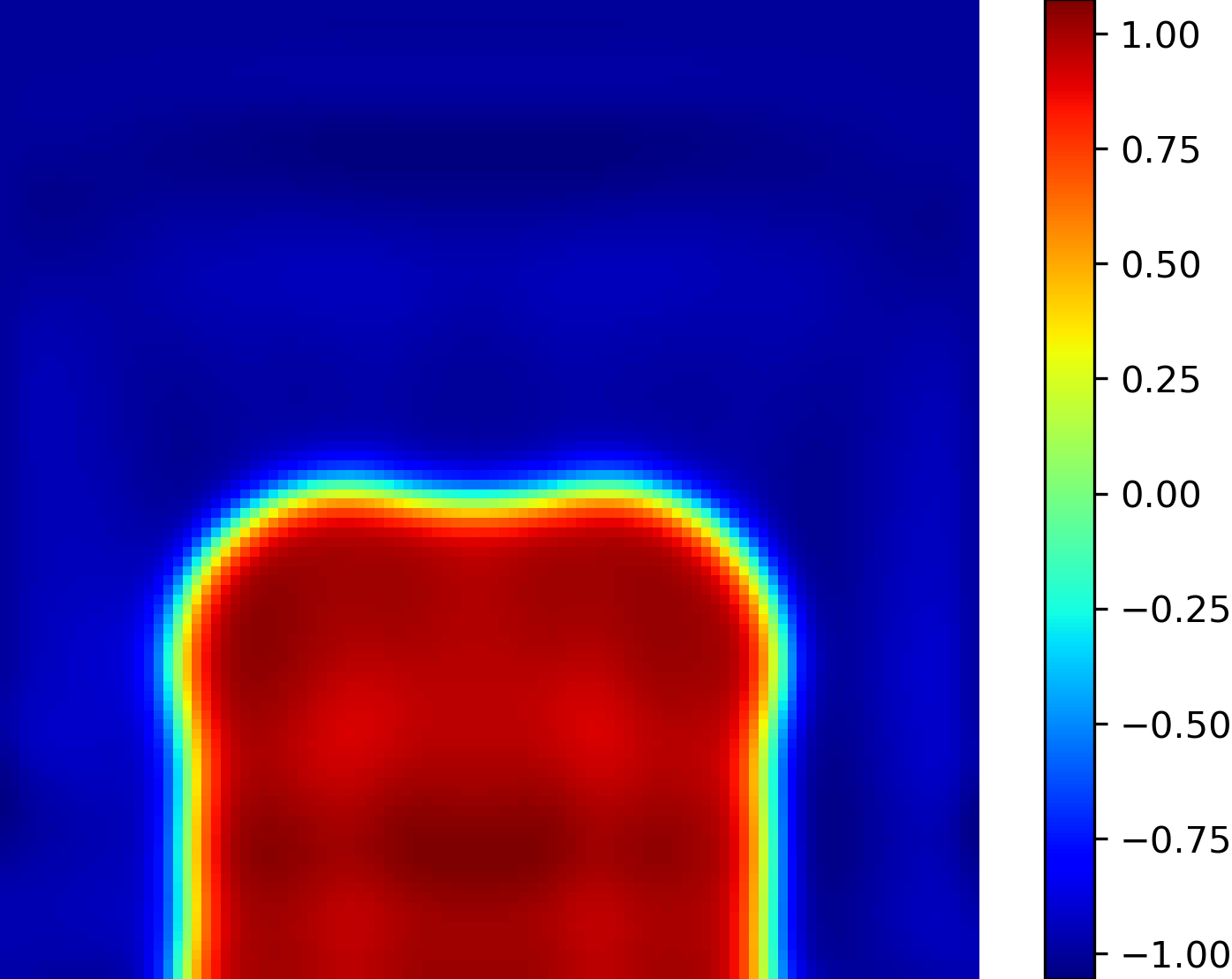}
    \end{subfigure}
    \hfill
    \begin{subfigure}[t]{0.3\textwidth}
        \includegraphics[scale=0.4]{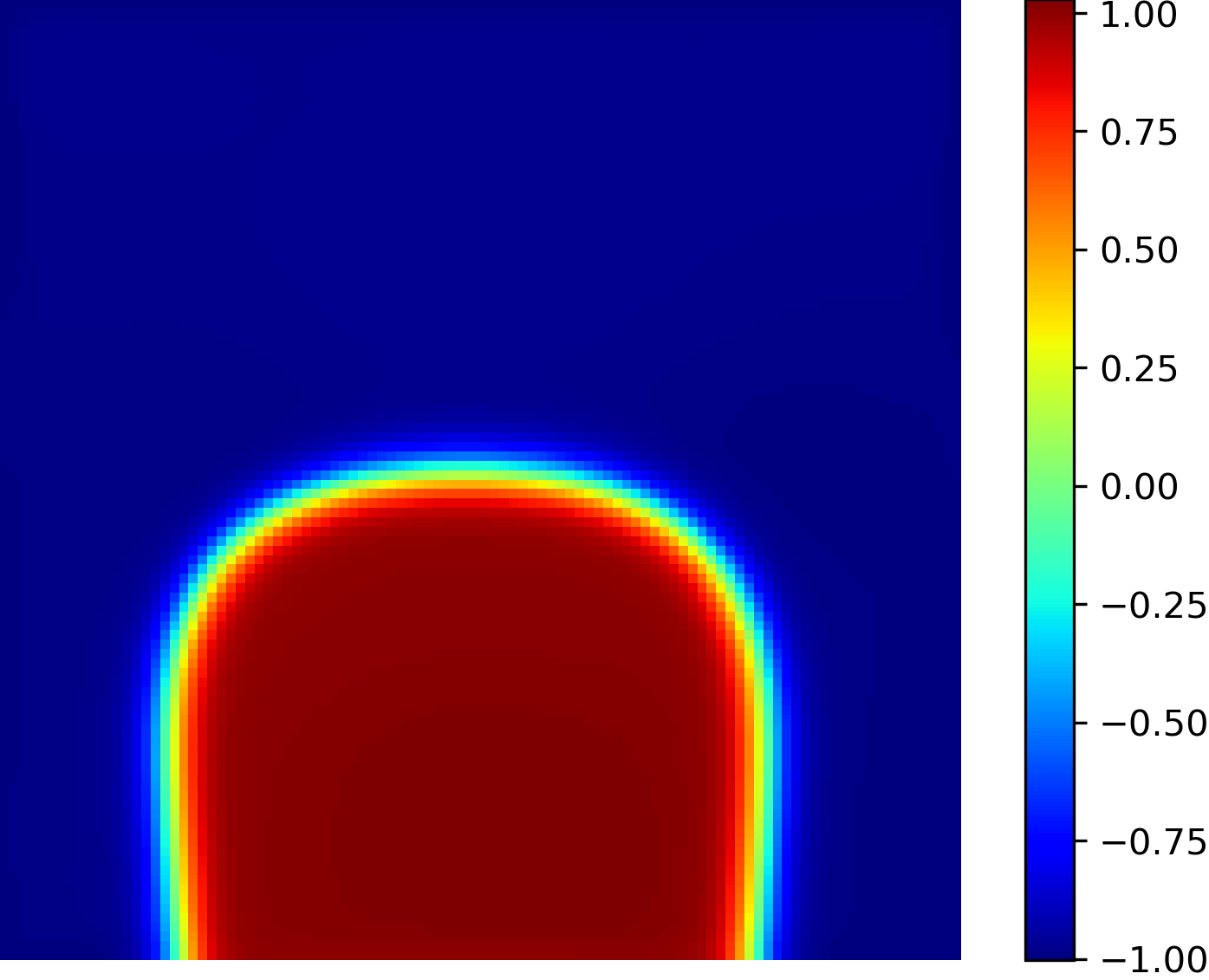}
    \end{subfigure}
    \hfill
    \begin{subfigure}[t]{0.3\textwidth}
        \includegraphics[scale=0.4]{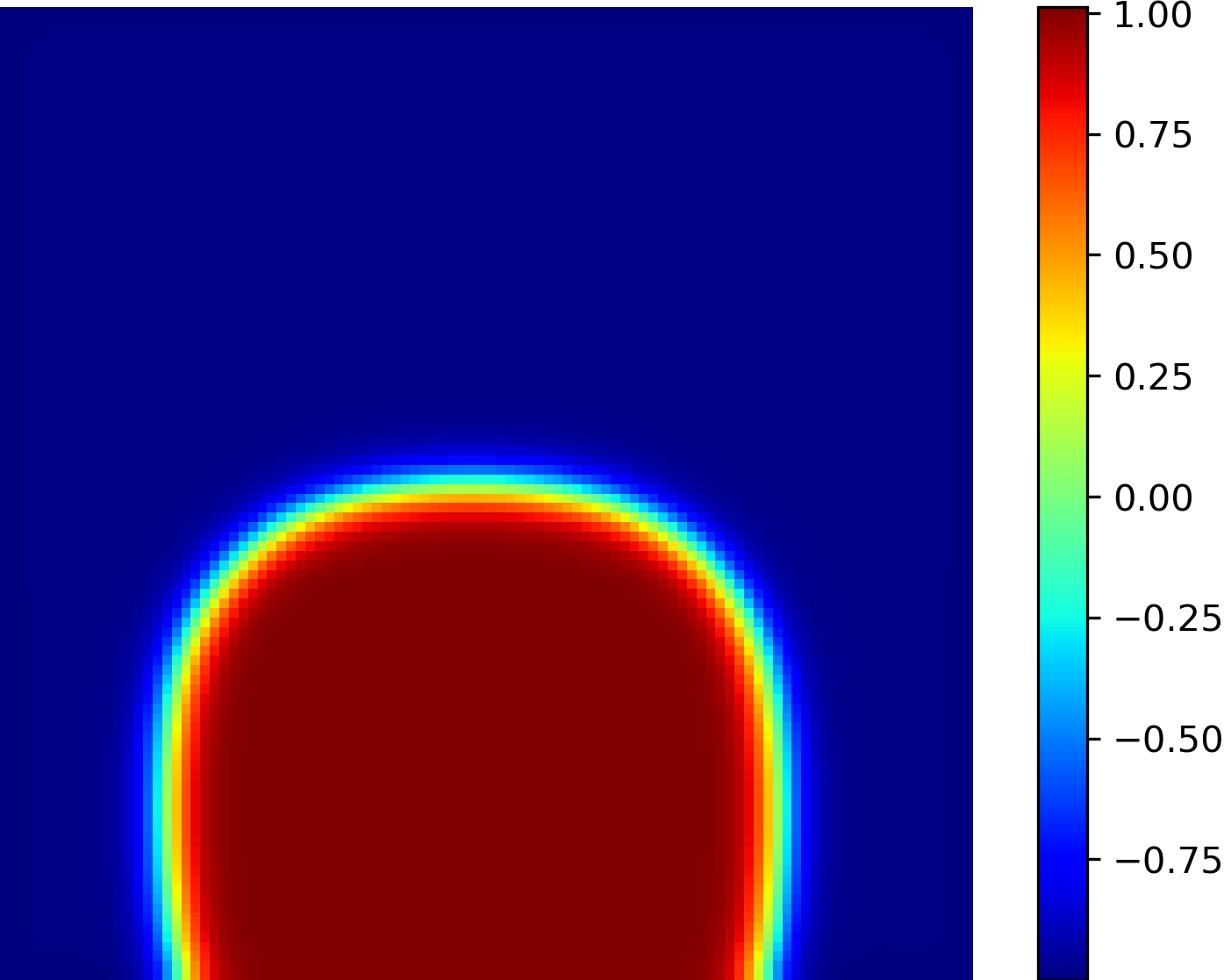}
    \end{subfigure}

    \begin{subfigure}[t]{0.3\textwidth}
        \includegraphics[scale=0.4]{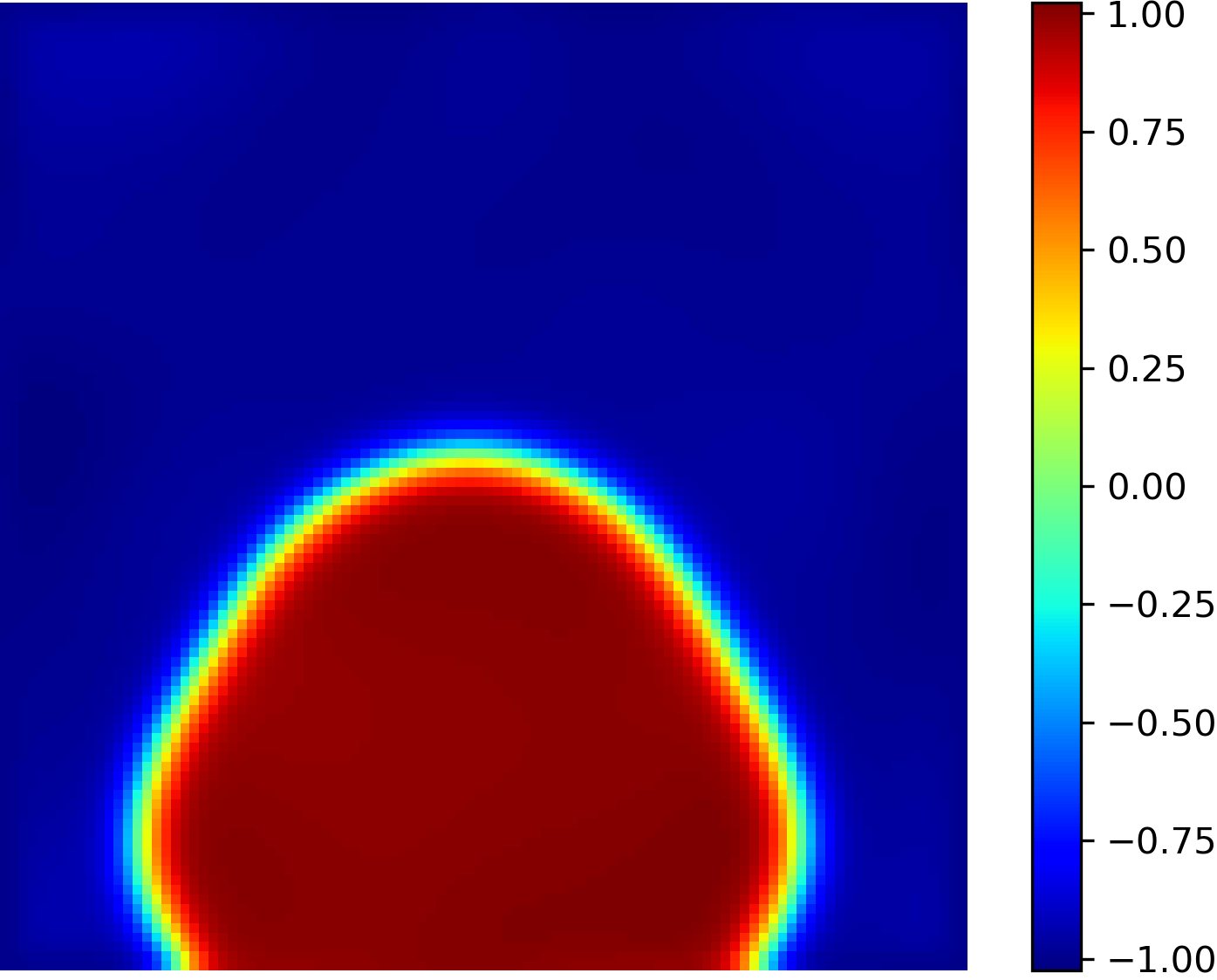}
    \end{subfigure}
    \hfill
    \begin{subfigure}[t]{0.3\textwidth}
        \includegraphics[scale=0.4]{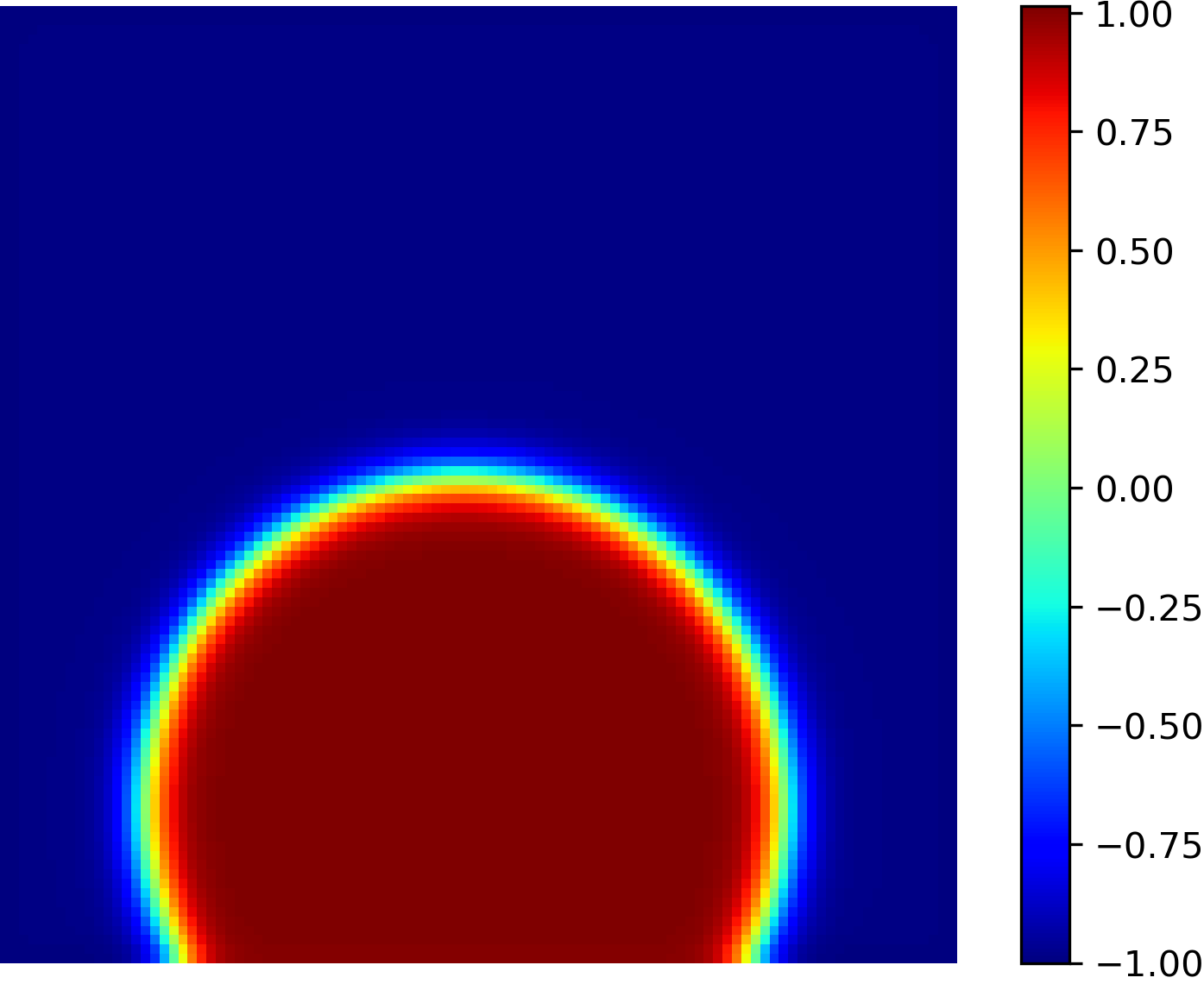}
    \end{subfigure}
    \hfill
    \begin{subfigure}[t]{0.3\textwidth}
        \includegraphics[scale=0.4]{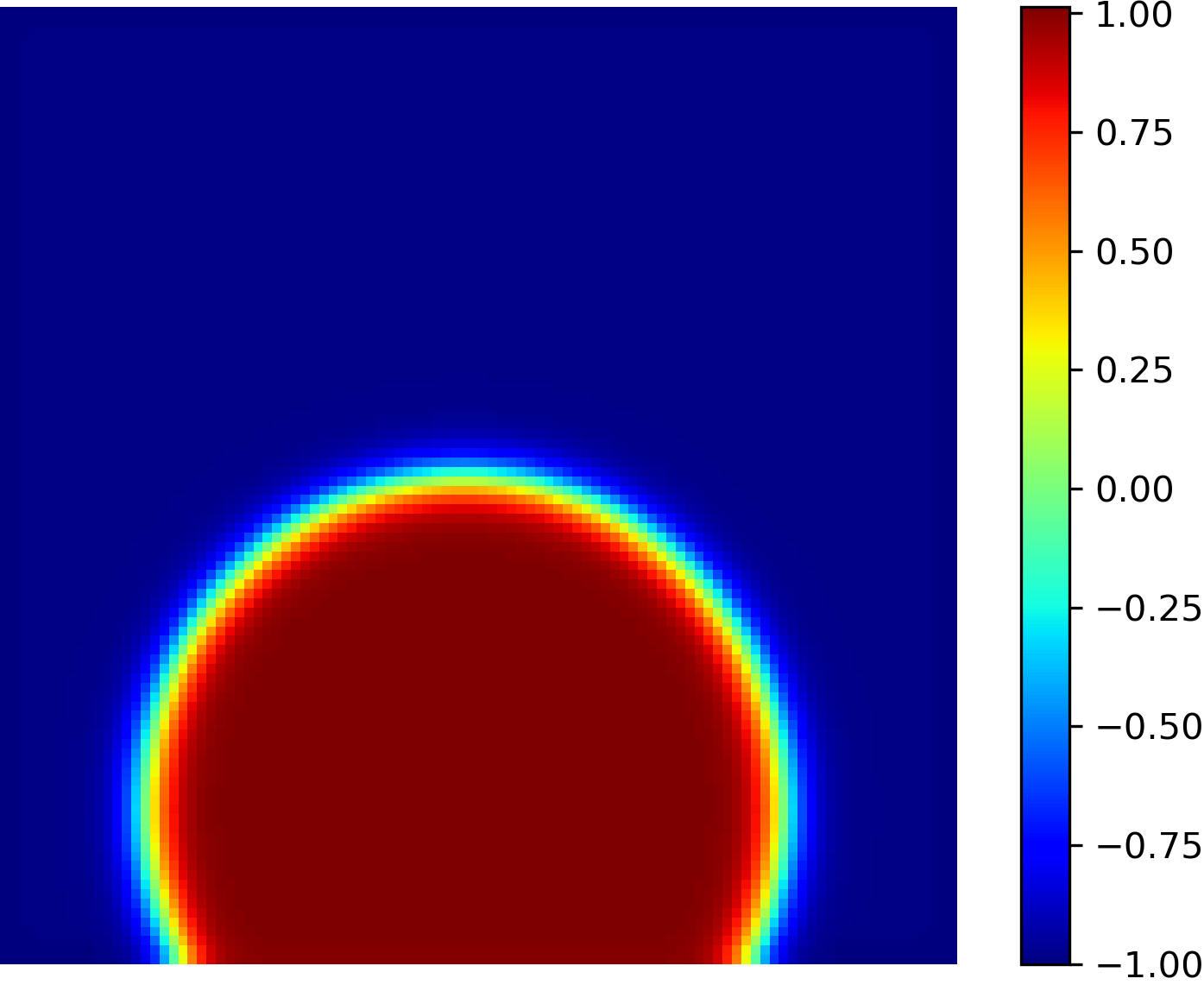}
    \end{subfigure}

    \begin{subfigure}[t]{0.3\textwidth}
        \includegraphics[scale=0.4]{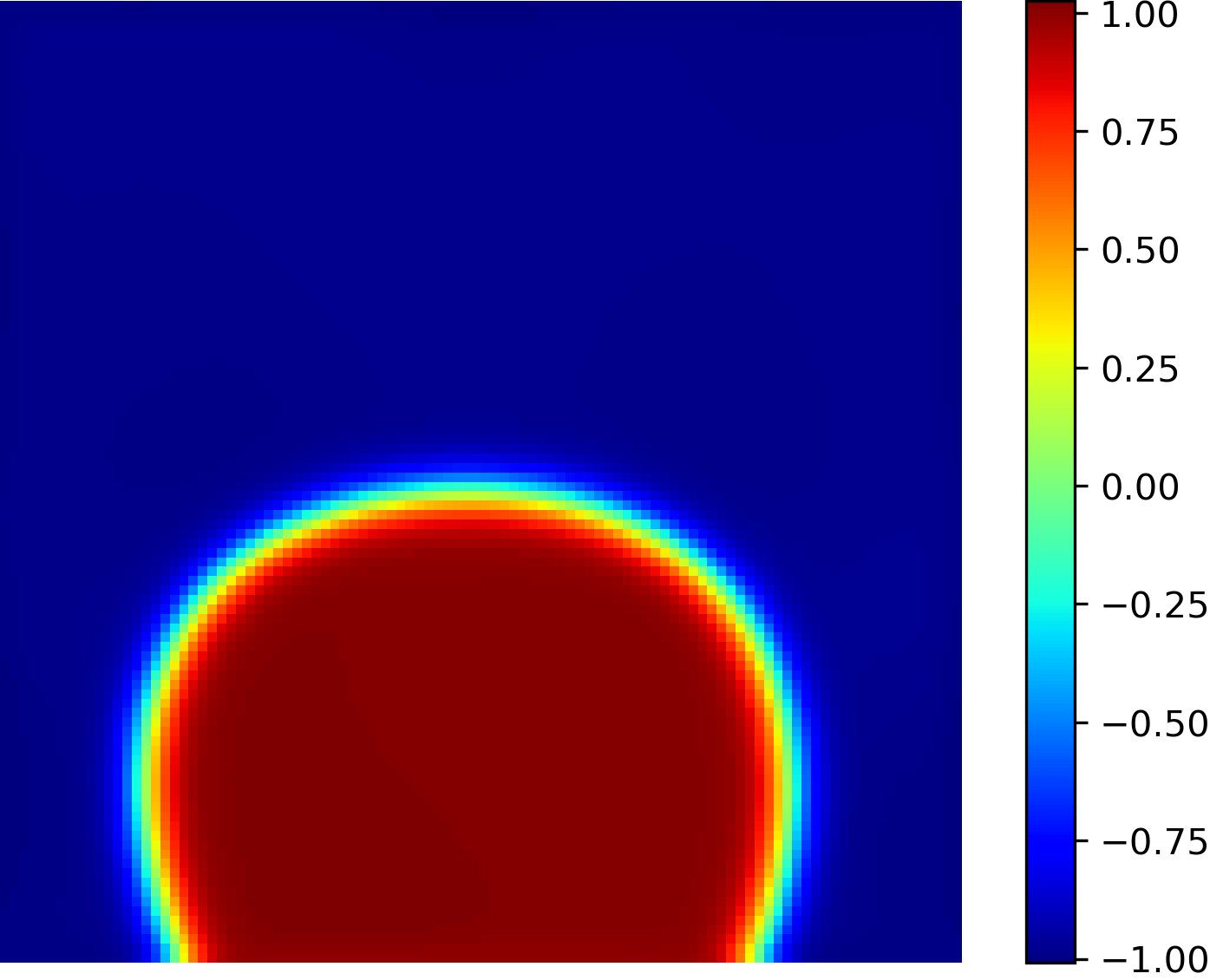}
    \end{subfigure}
    \hfill
    \begin{subfigure}[t]{0.3\textwidth}
        \includegraphics[scale=0.4]{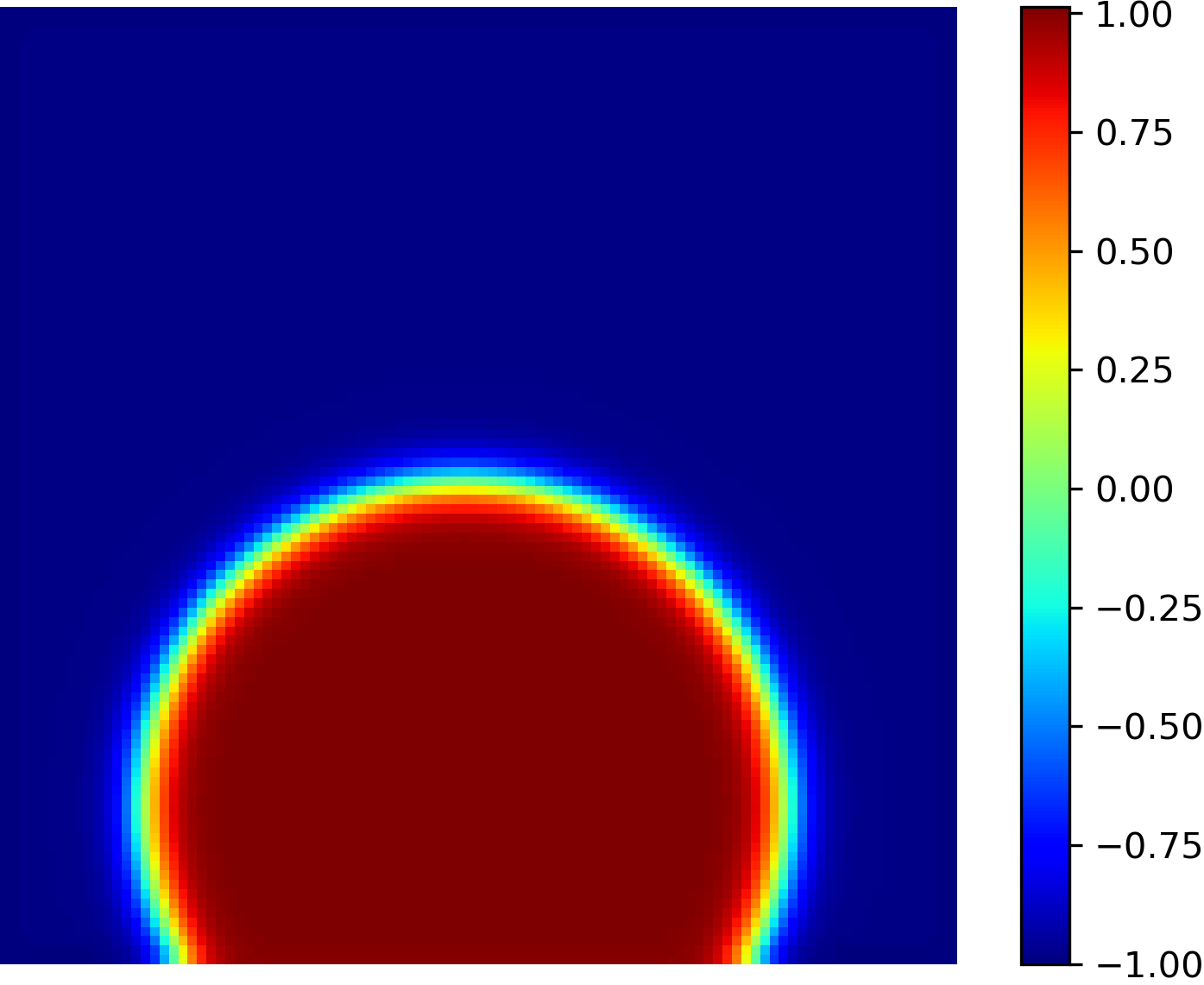}
    \end{subfigure}
    \hfill
    \begin{subfigure}[t]{0.3\textwidth}
        \includegraphics[scale=0.4]{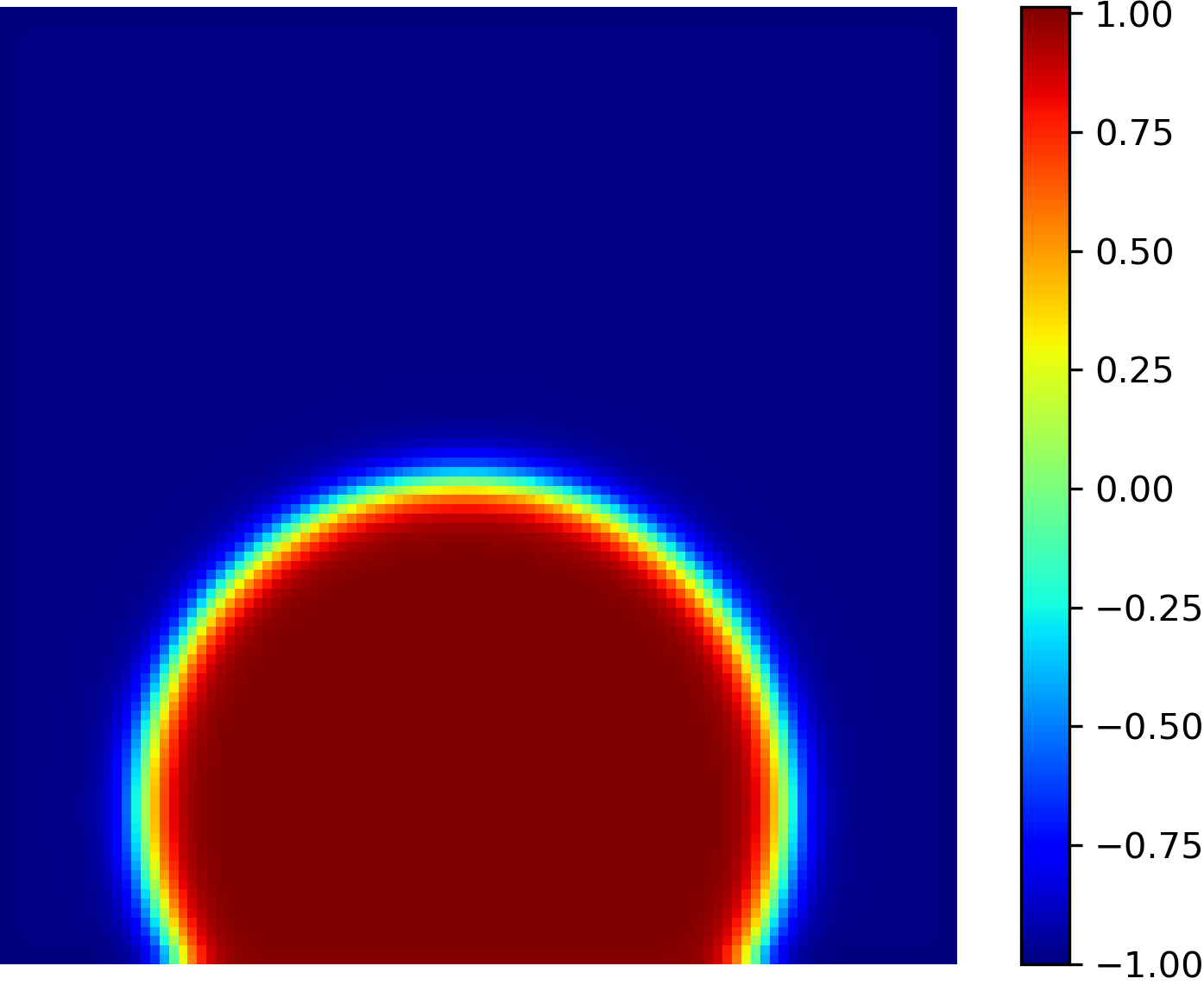}
    \end{subfigure}

    \begin{subfigure}[t]{0.3\textwidth}
        \includegraphics[scale=0.4]{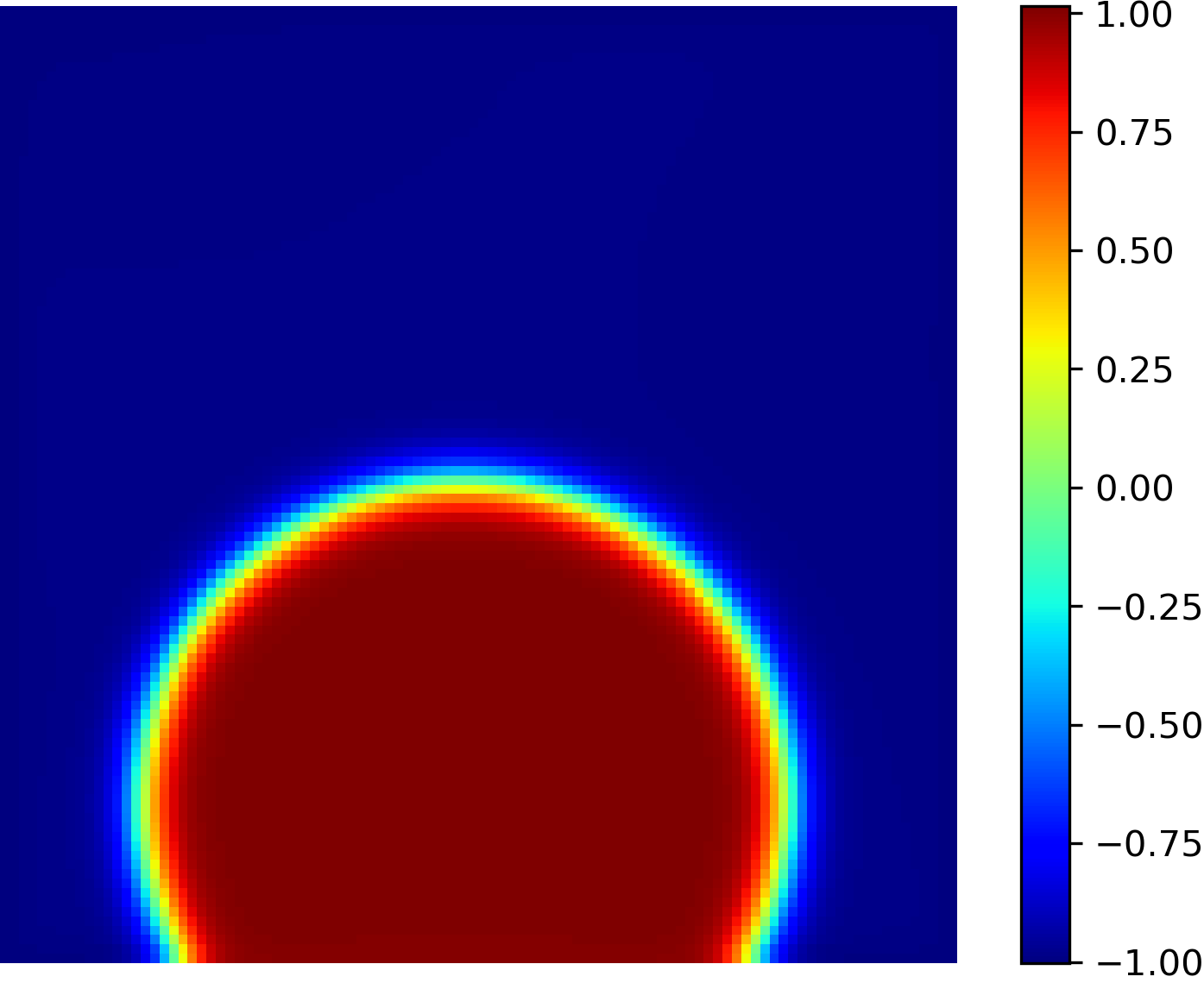}
    \end{subfigure}
    \hfill
    \begin{subfigure}[t]{0.3\textwidth}
        \includegraphics[scale=0.4]{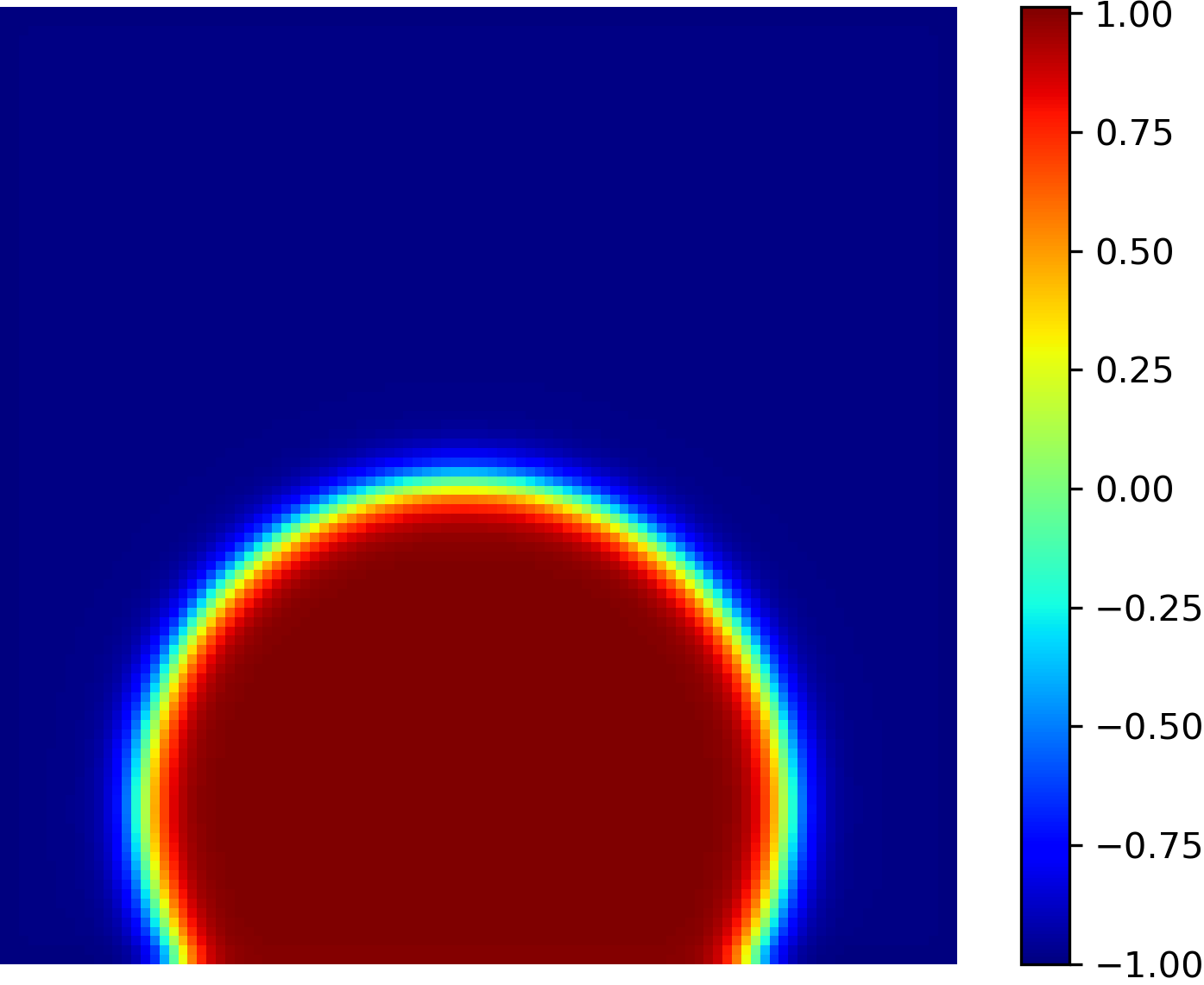}
    \end{subfigure}
    \hfill
    \begin{subfigure}[t]{0.3\textwidth}
        \includegraphics[scale=0.4]{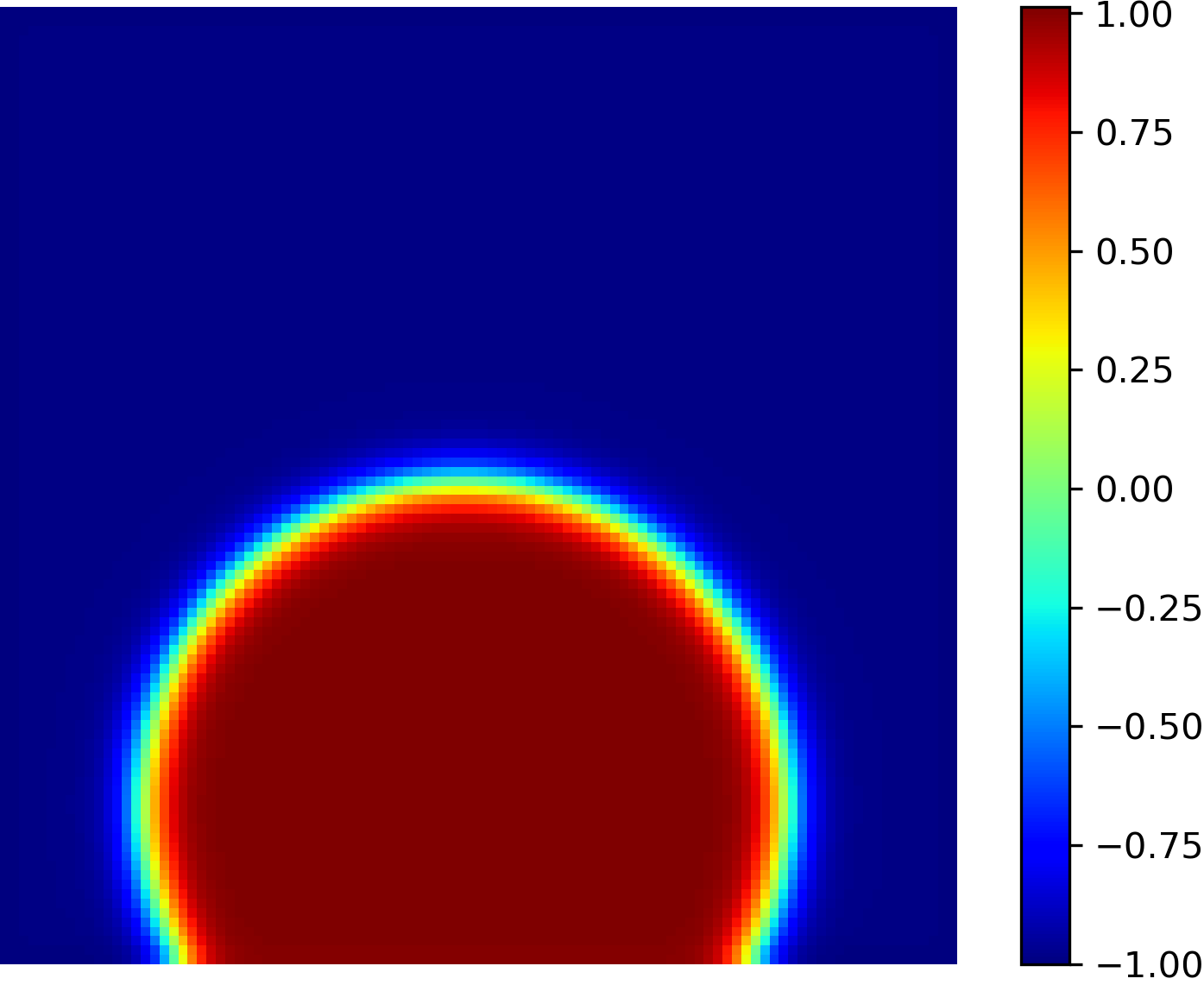}
    \end{subfigure}

    \caption{Case 4: Snapshots of the numerical approximation are taken at $T= 0.015$, $0.04$, $0.2$, $0.4$, and $0.9$ with different $\beta$. Left: $\beta=1$; Middle: $\beta=0.1$; Right: $\beta=0$.
    }
    \label{5.13}
\end{figure}

\section{Conclusion}
\label{sec6}
In this paper we 
 have investigated the hyperbolic Cahn-Hilliard equation with the hyperbolic Cahn-Hilliard type dynamic boundary condition. By adding two stabilizing terms, we have designed a linear, first-order in time and energy stable scheme for the system. Meanwhile, we have also proved that the scheme is of first order in time by the error analysis. Finally there are enough numerical cases to show the temporal convergence, the mass conservation in the bulk and on the boundary, and the energy stability of the scheme. We also find that the hyperbolic terms can help the system to delay reaching the steady state. 

\section*{Acknowledgement}
The authors acknowledge the support of NSFC, China 12001055.

\section*{Statements.}
We state that the datasets generated during and/or analysed during the current study are available from the corresponding author on reasonable request.
We declare that we have no conflict of interest in the submission of this manuscript.

\bibliographystyle{elsarticle-num}  
\bibliography{references} 

\end{document}